\documentclass[12pt]{amsart}
\pagestyle{plain}
\setlength{\parskip}{0in}
\setlength{\textwidth}{6.8in}
\setlength{\topmargin}{-.5in}
\setlength{\textheight}{9.3in}
\setlength{\parindent}{.25in}
\setlength{\oddsidemargin}{-.7cm}
\setlength{\evensidemargin}{-.7cm}

\setlength{\parindent}{.2in}
\newcommand{\PP}{\sf P}
\usepackage{amsthm}
\usepackage{comment}
\usepackage[latin1]{inputenc}
\usepackage{subfigure}
\usepackage{color}
\usepackage{amsmath}
\usepackage{amsthm}
\usepackage{amstext}
\usepackage{amssymb}
\usepackage{amsfonts}
\usepackage{graphicx}
\usepackage{young}
\usepackage{multicol}
\usepackage{mathrsfs}
\usepackage[all]{xy}
\usepackage{tikz}
\usepackage{mathtools}
\usepackage{tabularx}
\usepackage{array}
\usepackage{commath}
\usepackage{ytableau}
\usepackage[colorinlistoftodos]{todonotes}

\usepackage{enumerate}

\DeclareMathOperator{\rep}{\mathrm{rep}}

\DeclareMathOperator{\Hom}{\mathrm{Hom}}

\DeclareMathOperator{\NEnd}{\mathrm{NEnd}}

\theoremstyle{plain}
\theoremstyle{definition}
\newtheorem{theorem}{Theorem}[section]

\newtheorem{remark}[theorem]{Remark}
\newtheorem{lemma}[theorem]{Lemma}

\newtheorem{question}[theorem]{Question}
\newtheorem{example}[theorem]{Example}
\newtheorem{proposition}[theorem]{Proposition}
\newtheorem{corollary}[theorem]{Corollary}

\DeclareMathAlphabet{\mathpzc}{OT1}{pzc}{m}{it}

\newcommand{\diff}{\operatorname{diff}}
\usepackage{amssymb,amsmath,tabularx,graphicx}
\usepackage{tikz}
\usetikzlibrary[calc,intersections,through,backgrounds,arrows,decorations.pathmorphing]

\def\pr{\prime}

\newcommand{\op}{\textup{op}}

\newcommand{\NN}{\mathbb N\mskip-5mu \reflectbox{$\mathbb N$}}

\newcommand{\End}{\operatorname{End}}

\newcommand{\bd}{\textbf{d}}
\newcommand{\bl}{{\boldsymbol\lambda}}
\newcommand{\blambda}{\bl}
\newcommand{\bmu}{{\boldsymbol\mu}}
\newcommand{\bnu}{\boldsymbol\nu}
\newcommand{\JF}{\operatorname{JF}}
\newcommand{\fk}{\Bbbk}
\newcommand{\sP}{\textsf P}
\newcommand{\pos}{\textsf{P}_{{Q,m}}}
\newcommand{\posprime}{\textsf{P}_{{Q',m}}}
\newcommand{\tpos}{\tilde{\textsf{P}}_{{Q,m}}^i(M)}

\def\RSK{\rho_{Q,m}(M)}
\newcommand{\cat}{\mathcal{C}_{Q,m}}
\newcommand{\pro}{\operatorname{pro}}

\newcommand{\adj}{{\textup{adj}(k)}}
\newcommand{\repi}{\rep_{\textup{inj}}}
\newcommand{\reps}{\rep_{\textup{surj}}}
\newcommand{\trepi}{\widetilde{\operatorname{rep}}_\textup{inj}}
\newcommand{\treps}
{\widetilde{\operatorname{rep}}_\textup{surj}}
\newcommand{\refl}{\sigma}

\newcommand{\JFg}{\operatorname{GenJF}}
\newcommand{\GR}{\operatorname{GenRep}}
\newcommand{\DR}{\widetilde R}

\renewcommand{\P}{{\sf P}}
\newcommand{\J}{\sf J}
\newcommand{\OO}{\sf O}
\newcommand{\x}{{\sf x}}
\newcommand{\y}{{\sf y}}
\newcommand{\z}{{\sf z}}
\newcommand{\even}{\textup{even}}
\newcommand{\odd}{\textup{odd}}
\newcommand{\Ant}{\sf Ant}
\newcommand{\RPP}{\it RPP}

\title{Minuscule reverse plane partitions via quiver representations}
\date{}
\author{Alexander Garver}
\email{alexander.garver@gmail.com}

\author{Rebecca Patrias}
\email{rebecca.patrias@stthomas.edu}
\address{Department of Mathematics\\ University of St. Thomas}

\author{Hugh Thomas}
\email{thomas.hugh\_r@uqam.ca}
\address{Laboratoire de Combinatoire et d'Informatique Math\'ematique,
Universit\'e du Qu\'ebec \`a Mont\-r\'eal}


\begin{document}

\begin{abstract} {
A nilpotent endomorphism of a quiver representation induces a linear
transformation on the vector space at each vertex.  Generically among all
nilpotent endomorphisms, there is a well-defined Jordan form for these
linear transformations, which is an interesting new invariant of a
quiver representation.  If $Q$ is a Dynkin quiver and $m$ is a minuscule vertex,
we show that representations consisting of direct sums of indecomposable
representations all including $m$ in their support, the category of which we denote by
$\mathcal C_{Q,m}$, are determined up to
isomorphism by this invariant.  We use this invariant to define a bijection
from isomorphism classes of representations in $\mathcal C_{Q,m}$ to
reverse plane partitions whose shape is the minuscule poset corresponding to
$Q$ and $m$.  By relating the piecewise-linear promotion action on reverse plane partitions to
Auslander--Reiten translation in the derived category, we give a uniform
proof that the order of promotion equals the Coxeter number.  In type
$A_n$, we show that special cases of our bijection include the
Robinson--Schensted--Knuth
and Hillman--Grassl correspondences.}

\end{abstract}

\maketitle

\tableofcontents

\section{Introduction}

\subsection{Recovering a representation from information about its generic nilpotent endomorphisms}

Let $Q$ be a quiver with $n$ vertices numbered 1 to $n$.  Let $\bd=(d_1,\dots,d_n)$ be an
$n$-tuple of non-negative integers.  
Let $X$ be a 
representation of $Q$ with dimension vector $\bd$, over an algebraically closed ground field $\fk$.  Let $\phi$ be a nilpotent endomorphism of $X$.  At each
vertex $i$ of $Q$, the endomorphism $\phi$ induces an endomorphism $\phi_i$ of $X_i$.  We can consider the
Jordan form of each of these vector space endomorphisms,
which gives us a sequence of partitions $\lambda^i \vdash d_i$.  We show that for a generic choice of $\phi$, the 
$n$-tuple $\bl=(\lambda^1,\dots,\lambda^n)$ is well-defined.
We refer to this as the \textit{Jordan form data} of $X$, and we write it 
$\JFg(X)$.  

Note that $\JFg(X)$ is generally not enough information to 
recover $X$.  Consider, for example, $Q$ the quiver of type $A_2$, with $\bd=(1,1)$. There are two non-isomorphic representations of $Q$ with dimension vector $\textbf{d}$, and each representation has Jordan form data equal to $((1),(1))$.

\begin{question}\label{Q_V_from_genJF(V)} For which subcategories $\mathcal C$ of
$\rep Q$ is it the case that if we know that $X\in \mathcal C$, then we can recover $X$ from $\JFg(X)$?\end{question}

We say that such a subcategory is \textit{Jordan recoverable}.  (When we refer to subcategories, we always mean full subcategories  closed under direct sums and direct summands.)  Clearly, any subcategory with the property that the dimension vectors of its indecomposable representations are linearly independent is Jordan recoverable, since for such a subcategory $\mathcal C$, if we know $X\in\mathcal C$, then $X$ can be recovered from its dimension vector $\textbf{dim}(X)$. However, there are more interesting examples.

\begin{example}\label{ex:intro1}
The following is a non-trivial example of Jordan recoverability that we will use as a running example throughout this section. Let $\mathcal{C}_{Q,2}$ denote the subcategory of representations $X$ of $Q=1\rightarrow 2 \leftarrow 3$ such that each indecomposable summand of $X$ has support over vertex 2 of $Q$. By identifying indecomposable representations of $Q$ with their dimension vectors, each $X\in \mathcal{C}_{Q,2}$ is isomorphic to $010^a\oplus 011^b\oplus 110^c\oplus 111^d$ for some $a,b,c,d\in\mathbb{N}$. (Here, and throughout the paper, we write $\mathbb N$ for the non-negative integers.) Either by direct calculation, or by using results from Section~\ref{sec:RSK}, we see that $$\JFg(X)=((c+d),(\max(b,c)+a+d, \min(b,c)), (b+d)).$$ Given this Jordan form data, we can recover $X$ up to isomorphism. Concretely, this amounts to saying that if we know $c+d$, $\max(b,c)+a+d$, $\min(b,c)$, and $b+d$, then we can recover $a$, $b$, $c$, and $d$, which is easily verified.
\end{example}

One strategy for reconstructing $X$ from $\JFg(X)$ is the following.  Suppose we are given an $n$-tuple of partitions $\bl=(\lambda^i)$. Given this information, define the $n$-tuple $\bd=(d_i)$ by $d_i=|\lambda^i|$. 
Let $W_i$ be a vector space of dimension $d_i$, and fix $\phi_i$ a nilpotent linear operator on $W_i$ with Jordan block sizes given by $\lambda^i$. We write $\rep_\bl(Q)$ for the representations whose vector spaces are $W_i$ and such that $\phi=(\phi_i)$ 
defines an endomorphism of the representation.  
We show that $\rep_\bl(Q)$ is an irreducible variety.  It turns out that there is a dense open set $U\subset \rep_\bl(Q)$ such that for any representation in $U$, the dimension vectors of the indecomposable summands are well-defined.   As we explain, this is a 
generalization of Kac's well-known canonical decomposition of dimension vectors.  Under good circumstances (for example, if $Q$ is Dynkin), 
this implies that all the representations in $U$ are actually 
isomorphic.  

We say that a subcategory $\mathcal C$ of $\rep Q$ is 
\emph{canonically Jordan recoverable} if, for any $X\in\mathcal C$, there is a dense open set $U\subset \rep_{\JFg(X)}(Q)$ such that the representations at all points in $U$ are isomorphic to $X$.  

Our first main result is a non-trivial example of canonical Jordan recoverability.  For $i$ a vertex of $Q$, let $\mathcal{C}_{Q,i}$ be the subcategory of $\rep Q$ consisting of direct sums of indecomposable representations all of which have $i$ in their support.  

\begin{theorem}\label{Thm_can_jordan_recov} If $Q$ is a Dynkin quiver and $m$ is a minuscule vertex of $Q$, then
$\cat$ is canonically Jordan recoverable. \end{theorem}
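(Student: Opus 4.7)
The plan is to prove the theorem by computing $\JFg(X)$ explicitly for $X\in\cat$ in terms of the multiplicities of its indecomposable summands, and then using the Dynkin hypothesis to upgrade the resulting Jordan recoverability to canonical Jordan recoverability.

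Since $Q$ is Dynkin and $m$ is minuscule, the indecomposable objects of $\cat$ are in natural bijection with the elements of the minuscule poset $\pos$, with $x\in\pos$ corresponding to an indecomposable $M_x$ of known dimension vector. Every $X\in\cat$ therefore has a unique decomposition $X=\bigoplus_{x\in\pos} M_x^{n_x}$, and recovering $X$ up to isomorphism is equivalent to recovering the multiplicity function $x\mapsto n_x$ from $\JFg(X)$. I would thus reformulate the problem as a combinatorial question on functions $\pos\to\mathbb{N}$.

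The central step is to give a closed-form description of $\JFg(X)$ in terms of $(n_x)$. Motivated by Example~\ref{ex:intro1}, the expected answer is that the partition $\lambda^i$ at vertex $i$ records chain sums of the $n_x$'s over the fibre of $\pos$ lying above $i$, so that $\bl$ is equivalent to a reverse plane partition on $\pos$ from which $(n_x)$ is recoverable by local differences. To prove this, I would construct an explicit nilpotent endomorphism $\phi$ of $X$ by assembling natural maps between adjacent indecomposables (for instance, coming from the Auslander--Reiten structure of $\cat$), and verify that each induced $\phi_i$ attains the predicted Jordan type; genericity of $\phi$ is then a computation carried out at each vertex. Inverting $(n_x)\mapsto\bl$ is a combinatorial check on reverse plane partitions, giving Jordan recoverability.

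To upgrade to canonical Jordan recoverability, I would observe that the explicit endomorphism $\phi$ above realises $X$ as a point of $\rep_{\JFg(X)}(Q)$. Combining this with the fact, noted before the theorem, that over a Dynkin quiver a dense open subset of $\rep_\bl(Q)$ carries a single isomorphism class, forces that generic class to coincide with the isomorphism class of $X$. The main obstacle is the explicit and uniform computation of $\JFg$ across the families of minuscule situations (type $A_n$, the two minuscule vertices of $D_n$, and the exceptional types $E_6$ and $E_7$): a case-by-case construction of $\phi$ is possible but unappealing, so the real challenge is to construct $\phi$ in a way that respects the combinatorics of $\pos$ transparently enough that the reverse plane partition interpretation of $\JFg$ emerges uniformly rather than being verified type by type.
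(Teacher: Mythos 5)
Your plan, as written, has two genuine gaps, one of which is fatal to the final step. The upgrade from Jordan recoverability to canonical Jordan recoverability is a non sequitur: exhibiting a nilpotent endomorphism $\phi$ of $X$ with $\operatorname{JF}(\phi)=\JFg(X)$ only shows that $X$ occurs as \emph{some} point of $\rep_{\JFg(X)}(Q)$, and that is far from forcing the dense open isomorphism class of that variety to be the class of $X$. The paper's own $D_4$ example makes this concrete: for the quiver with all arrows into the central vertex, $X=1100\oplus 1011$ has $\JFg(X)=((1,1),(1),(1),(1))$, and since all the prescribed nilpotent operators are zero, \emph{every} representation of dimension vector $(2,1,1,1)$ lies in $\rep_{\JFg(X)}(Q)$; the generic one is the indecomposable of dimension vector $(2,1,1,1)$, not $X$. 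Canonical Jordan recoverability is precisely the assertion that the generic class of $\rep_{\JFg(X)}(Q)$ \emph{is} $X$, and this is strictly stronger than injectivity of $\JFg$ on $\cat$ together with Corollary~\ref{cor:genrep}. The paper has to work for this statement: it is the content of Theorem~\ref{th-ref-canon} (whose proof needs a contradiction argument to rule out a larger compatible locus containing the image of the reflection functor) propagated through the induction of Theorem~\ref{thm_rsk}, which establishes $X\simeq\GR(\JFg(X))$ one reflection at a time starting from the zero category.

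The second gap is the one you flag yourself but underestimate: the closed-form computation of $\JFg(X)$ from the multiplicities, and the verification that your explicitly constructed $\phi$ is \emph{generic}. Showing genericity is a maximality statement over all of $\operatorname{NEnd}(X)$ in dominance order, not a vertex-by-vertex computation of one endomorphism; in type $A$ the paper obtains it from Gansner's theorem on generic nilpotent elements of incidence algebras (Proposition~\ref{prop:GKinv}), and it remarks immediately afterwards that no obvious analogue exists in the other types, precisely because minuscule indecomposables outside type $A$ have vertex spaces of dimension greater than one, so an endomorphism is not determined by a poset-indexed matrix of scalars. The paper avoids both difficulties by never writing a closed formula: it computes $\JFg$ and $\GR$ inductively via reflection functors (Theorems~\ref{th-ref-canon} and~\ref{th-ref-nil}), with the two-neighbourly heap combinatorics of $\pos$ (Lemmas~\ref{four-two} and~\ref{lem:2neighconverse}) guaranteeing the interlacing hypotheses at each step. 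So your route could in principle be completed in type $A$, but as it stands the central computation is missing in general and the final "forcing" step is invalid in all types.
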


For a conceptual definition of what it means for a vertex to be minuscule, see Section \ref{min-poset-def}.  In type $A_n$, all vertices are minuscule; in type $D_n$, the minuscule vertices are the vertices of degree 1; in types $E_6$ and $E_7$, a subset of the vertices of degree 1 are minuscule, while in type $E_8$ no vertices are minuscule.

\begin{example}\label{ex:intro2}

Example~\ref{ex:intro1} is an instance of Theorem~\ref{Thm_can_jordan_recov}. Using the quiver from that example, suppose we start with the representation $X=010\oplus 011\oplus 110$.  In terms of that example, we have $a=b=c=1$ and $d=0$.  According to that example, or by direct computation, we determine that the Jordan form data for this representation is $((1),(2,1),(1))$.  Now suppose we want to recover $X$ from its Jordan form data.  

We carry out the above procedure: we start with 
1-dimensional $W_1$, 3-dimensional $W_2$, and 1-dimensional $W_3$, and on $W_i$ we have a linear transformation $\phi_i$, with $\phi_1=0=\phi_3$, and with $\phi_2$ having two Jordan blocks of sizes 1 and 2.  Let $f_{1,2}$ be a generic linear map from $W_1$ to $W_2$ that is compatible with $\phi_1$ and $\phi_2$, i.e., such that $f_{1,2}\phi_1=\phi_2f_{1,2}$.  This holds if and only if the image of $f_{1,2}$ lies in the kernel of $\phi_2$.  The same analysis applies to $f_{3,2}$, a generic linear map from $W_3$ to $W_2$.  Let $w_1\neq 0 \in W_1$ and $w_3\neq 0 \in W_3$. For generic choices of $f_{1,2}$ and 
$f_{3,2}$, we have that $f_{1,2}(w_1)$ and $f_{3,2}(w_3)$ are linearly independent in the kernel of $\phi_2$, and we may thus extend the pair to a basis of $W_2$, $\{f_{1,2}(w_1),f_{3,2}(w_3), w_2\}$. One then checks that a representation of $Q$ with vector spaces $W_1,W_2,W_3$ and such linear maps $f_{1,2}$ and $f_{3,2}$ is isomorphic to  $010\oplus 011\oplus 110$, so we have canonically recovered the isomorphism class of $X$ from its Jordan form data.  

Alternatively, suppose that our starting representation had been
$X'=010^2 \oplus 111$.  Its Jordan form data is $\bl'=((1),(3),(1))$.  So, to recover $X'$, we start with vector spaces $W'_1,W'_2$, and $W'_3$. On $W'_i$ we have nilpotent maps $\phi'_i$ with $\phi'_1=\phi'_3=0$ and $\phi_2'$ having one Jordan block of size 3. This time, for $w'_1\neq 0\in W'_1$, $w'_3\neq0 \in W'_3$, and generic maps $f_{1,2}', f_{3,2}'$, we see that $f_{1,2}'(w'_1)$ and $f_{3,2}'(w'_3)$ are not linearly independent as they must both lie in the 1-dimensional kernel of $\phi_2'$. Such a representation of $Q$ is isomorphic to $010^2\oplus111$, so we have recovered $X'$.  \end{example}

\begin{example}
Let us consider instead the case of $\mathcal C_{Q,m}$ for $m$ a non-minuscule vertex.  Let $Q$ be the quiver of type 
$D_4$ shown in Figure~\ref{fig:D4Quiver}. 
The representations $1100 \oplus 1011$, $1010\oplus 1101$, and $1001\oplus 1110$ in $\mathcal C_{Q,1}$ all have Jordan data $((1,1),(1),(1),(1))$, so $\mathcal C_{Q,1}$ is not Jordan recoverable.
\end{example}
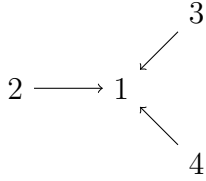
\begin{figure}
\begin{tikzpicture}
\node (1) at (0,0) {1};
\node (2) at (-1.41,0) {2};
\node (3) at (1,1) {3};
\node (4) at (1,-1) {4};
\draw[->] (2)--(1);
\draw[->] (3)--(1);
\draw[->] (4)--(1);
\end{tikzpicture}
\caption{A type $D_4$ quiver.}
\label{fig:D4Quiver}
\end{figure}

\subsection{Structure of Jordan form data in the minuscule case}
In light of Theorem \ref{Thm_can_jordan_recov}, which says that the Jordan form data of a representation in $\cat$ 
characterizes the representation up to isomorphism, it is particularly natural to 
ask what can be said about the Jordan form data.  It turns out to have a
very particular form.  In order to describe it, we need to introduce
some further notation.  Associated to $Q$ and the minuscule vertex $m$,
there is a \emph{minuscule poset} $\pos$, whose definition we defer to Section \ref{min-poset-def}.
The minuscule poset $\pos$ is equipped with a 
map $\pi$ to the vertices of $Q$.  This map has, in particular, the property that each fibre $\pi^{-1}(j)$ is totally ordered.

\begin{theorem}  \label{th-rpp}
\begin{enumerate} 
\item Let $X\in \cat$.  The number of parts in the partition $\JFg(X)^j$ is less than or equal to the size of the fibre $\pi^{-1}(j)$.
\item For $X\in \cat$, define a map $\rho_{Q,m}(X):\pos \rightarrow \mathbb{N}$ as follows: The values of $\rho_{Q,m}(X)$ restricted to $\pi^{-1}(j)$ are the entries 
of $\JFg(X)^j$, padded with extra zeros if necessary, and ordered so that, restricted to $\pi^{-1}(j)$, the function is order-reversing.
Then $\rho_{Q,m}(X)$ is order-reversing as a map from $\pos$ to $\mathbb{N}$.
\item The map from isomorphism classes in $\cat$ to 
order-reversing maps from $\pos$ to $\mathbb{N}$, sending
$X$ to $\rho_{Q,m}(X)$, is a bijection.
\end{enumerate}
\end{theorem}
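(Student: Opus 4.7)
The plan rests on the explicit parametrization of $\cat$ in the minuscule case: the indecomposable objects of $\cat$ are in bijection with the elements of $\pos$, and any $X \in \cat$ decomposes uniquely as $X = \bigoplus_{p \in \pos} M_p^{n_p}$ for non-negative integers $n_p$. The heart of the argument is an explicit description of $\JFg(X)^j$ for each vertex $j$ in terms of the multiplicities $n_p$; granting such a formula, all three parts of the theorem will follow.

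For parts (1) and (2), I would analyze the action of a generic nilpotent endomorphism $\phi$ on the vector space $X_j$. In the minuscule case, the structure of the endomorphism algebra of $X$ is rigid enough that the generic Jordan partition $\JFg(X)^j$ has at most $|\pi^{-1}(j)|$ parts, yielding (1); heuristically, the elements of $\pi^{-1}(j)$ can be thought of as the distinct ``levels'' at which Jordan blocks of $\phi_j$ are forced to live. Placing these part sizes along the chain $\pi^{-1}(j)$ in weakly decreasing order handles the within-fibre monotonicity automatically, so the remaining content of (2) is the cross-fibre inequality $\rho_{Q,m}(X)(p) \geq \rho_{Q,m}(X)(p')$ at each cover $p \lessdot p'$ of $\pos$ with $\pi(p) \neq \pi(p')$. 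My plan here is to exploit the arrow of $Q$ connecting $\pi(p)$ and $\pi(p')$: the induced linear map between $X_{\pi(p)}$ and $X_{\pi(p')}$, together with rank-nullity applied to iterated kernels of the respective $\phi_j$'s, should force the desired inequality.

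For part (3), injectivity is immediate from Theorem \ref{Thm_can_jordan_recov}: $\rho_{Q,m}(X) = \rho_{Q,m}(Y)$ forces $\JFg(X) = \JFg(Y)$, so canonical Jordan recoverability gives $X \cong Y$. For surjectivity, given an order-reversing $\rho : \pos \to \Nbb$, I would invert the formula for $\rho_{Q,m}$ by taking successive differences of $\rho$ along fibres and across covers of $\pos$ to recover candidate multiplicities $n_p$. The order-reversing property of $\rho$ should guarantee that these differences are non-negative integers, and the representation $X = \bigoplus M_p^{n_p}$ will then satisfy $\rho_{Q,m}(X) = \rho$ by construction.

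The principal obstacle is producing the explicit formula for $\JFg(X)^j$ in terms of the $n_p$. The running example (Example \ref{ex:intro1}) already shows that this formula involves $\max$ and $\min$ of multiplicities, so it is not merely linear in the $n_p$; establishing it uniformly across all Dynkin types and minuscule vertices will likely require Auslander--Reiten-theoretic input, for instance via a comparison of $\JFg$ at adjacent vertices using the AR translation. The same non-linearity underlies the subtle part of surjectivity in (3), where the ``take successive differences'' recipe must be shown to produce globally consistent non-negative multiplicities regardless of the chosen traversal of $\pos$.
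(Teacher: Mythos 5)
Your plan defers exactly the content that constitutes the theorem. The whole difficulty of Theorem \ref{th-rpp} is controlling $\JFg(X)$ for $X\in\cat$, and you acknowledge that your argument rests on an as-yet-unproved ``explicit formula for $\JFg(X)^j$ in terms of the $n_p$.'' No such closed formula is established (the paper itself never produces one in general; it only obtains a piecewise-linear algorithm, Theorem \ref{thm_4_7_alg}, as a by-product). Without it, each of your steps is a hope rather than an argument: the bound in (1) is asserted via a heuristic about ``levels'' (note this is not a dimension count -- for $j\neq m$ the summands $M_p$ can have $\dim (M_p)_j>1$, e.g.\ in type $D$, so the number of Jordan blocks at $j$ being at most $|\pi^{-1}(j)|$ is genuinely nontrivial); the cross-fibre inequality in (2) is not a consequence of rank--nullity applied to a single arrow of $Q$ -- in the paper it requires the two-neighbourly heap structure of $\pos$ (Lemmas \ref{four-two}, \ref{lem:2neighconverse}, \ref{comb-alt}) combined with the interlacing behaviour of Jordan data under reflection functors (Lemma \ref{big-lemma}, Theorems \ref{th-ref-canon}, \ref{th-ref-nil}); and your surjectivity recipe of ``taking successive differences'' is incompatible with the non-linearity ($\max$/$\min$) you yourself point out, so there is no reason the differences are non-negative or independent of the traversal.

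By contrast, the paper avoids any closed formula by an induction over intermediate hearts: it proves the stronger Theorem \ref{thm_rsk} by starting with $\rep\Xi$ far to the right in $D^b(Q)$ (where everything is zero) and reflecting at sources one at a time, showing at each step that the Jordan data changes by a toggle at one vertex plus the multiplicity of the newly created simple projective; taking $\rep\Xi=\rep Q$ then yields (1), (2), (3) simultaneously, with surjectivity coming from the fact that toggles are invertible and the new multiplicity can be prescribed freely, and with the inverse map given by the generic compatible representation. One further caution: your injectivity step leans on Theorem \ref{Thm_can_jordan_recov}, which in the paper is proved by the very same induction, so in a self-contained proof of Theorem \ref{th-rpp} you would have to supply that ingredient as well rather than treat it as free. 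As it stands, the proposal identifies the right objects but leaves the essential mechanism -- how $\JFg$ interacts with the minuscule combinatorics -- unproved.
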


The bijection described in part (3) of Theorem~\ref{th-rpp} corresponding to our running example is shown in Figure~\ref{fig:introRPP}. The arrows of the Hasse quiver of this poset  are pointing left-to-right to indicate that larger elements of the poset appear further to the right. We will consistently use this unusual convention for compatibility with the conventional way to draw Auslander--Reiten quivers.

\begin{figure}
\[\raisebox{.65in}{$010^a\oplus 011^b\oplus 110^c\oplus 111^d \hspace{.2in}\longleftrightarrow$\hspace{.2in}} 
\begin{tikzpicture}[scale=1.5]
\node (1) at (1,0) {{$b+d$}};
\node (3) at (2,1) {{$\min(b,c)$}};
\node (2) at (0,1) {{$\max(b,c)+a+d$}};
\node (4) at (1,2) {{$c+d$}};
\draw[->] (1)--(3);
\draw[->] (2)--(1);
\draw[->] (2)--(4);
\draw[->] (4)--(3);
\end{tikzpicture}\]
\caption{The correspondence between isomorphism classes of representations of $Q=1\rightarrow 2 \leftarrow 3$ belonging to $\mathcal{C}_{Q,2}$ and order-reversing maps from $\textsf{P}_{Q,2}$  to $\mathbb{N}$.}
\label{fig:introRPP}
\end{figure}
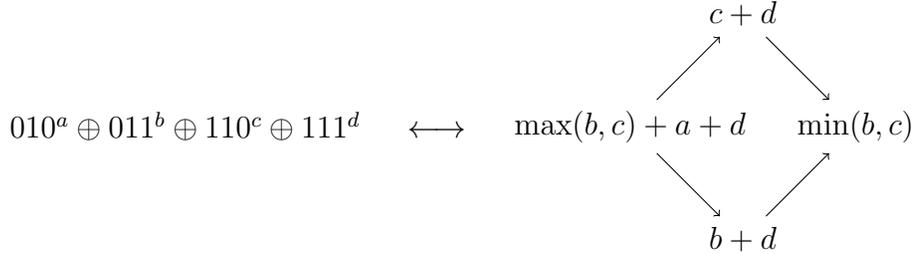

We call the order-reversing maps $\rho_{Q,m}(X)$ appearing in Theorem~\ref{th-rpp} \textit{reverse plane partitions} of the corresponding poset  $\pos$. We denote the collection of all reverse plane partitions of $\pos$ by $\RPP(\pos)$. From our proof of Theorem~\ref{th-rpp}, we also obtain a combinatorial algorithm for calculating $\rho_{Q,m}(X)$ from the multiplicities of the indecomposable summands of $X$; see Theorem~\ref{thm_4_7_alg} for the precise statement.
The proof of Theorem \ref{th-rpp} depends on the 
combinatorics of minuscule
posets. See \cite{green2013combinatorics} for a thorough introduction.  

\subsection{Reverse plane partitions for objects in the root category}

If we like, we can think of $\cat\subset \rep(Q)$ as being 
contained in the bounded derived category $D^b(Q)$.  It turns 
out that it is possible to define a more general reverse plane
partition which records, not only the structure of $X\in\cat$, but
also some information about the choice of an abelian subcategory of $D^b(Q)$ derived equivalent to $\rep Q$.

Define the orbit category $\mathcal R_Q= D^b(Q)/[2]$.  {This category was originally studied by Happel \cite{happel1985tilting}. It is called the \textit{root category} since its indecomposable objects are in bijection with the roots of the root system associated with $Q$. The root category is a triangulated category whenever $Q$ is acyclic \cite{peng1997root}.}  Every object in $\mathcal R_Q$
can be written as $X\simeq X^\even \oplus X^\odd [1]$, with $X^\even,X^\odd \in \rep(Q)$.

Let $\Xi$ be another quiver with the same underlying graph as $Q$.
The root categories of $Q$ and $\Xi$ are equivalent.  Fix an equivalence (subject to some natural technical conditions which we defer).  
Suppose that we have an object $X\in \cat\subset \mathcal R_Q$.  Using the equivalence of $\mathcal R_Q$ with $\mathcal R_\Xi$, we write 
$X\simeq X^\even_\Xi \oplus X^\odd_\Xi[1]$, with $X^\even_\Xi, X^\odd_\Xi \in \rep(\Xi)$.  

A generic nilpotent endomorphism of $X$ induces a generic nilpotent endomorphism of $X_\Xi^\even$ and $X_\Xi^\odd$, so it is natural to consider their Jordan form data.
Let $\JFg(X^\even_\Xi)=\bl$, and $\JFg(X^\odd_\Xi)=\bmu$.  We want to fit the entries of $\bl$ and $\bmu$ into
$\pos$ to form a reverse plane partition, putting the entries of $\bl$ into an order filter in $\pos$, and putting the entries of $\bmu$ into the complementary order ideal.  In order for this to have a hope
of defining a reverse plane partition, the entries of $\bmu$ would 
have to be all larger than the entries of $\bl$.  To ensure that
this is the case, we consider reverse plane partitions with
entries in the set $\NN=\{0,1,2,\dots,\infty-2,\infty-1,\infty\}$ (with the obvious total order).\footnote{ We recommend the pronunciation ``en-ne'' for $\NN$.}
Each part $j$ in $\bmu$ is entered into the reverse plane partition
as $\infty-j$.  

\begin{proposition} For any $X\in\cat$ and any $\Xi$, it is possible to carry out the above procedure, defining
a reverse plane partition on $\pos$ with entries in $\NN$, 
which we denote $\rho_{Q,m}^\Xi(X)$. \end{proposition}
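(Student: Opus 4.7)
The plan is to apply Theorem~\ref{th-rpp} separately to $X^\even_\Xi$ and $X^\odd_\Xi$, and then to show that the two resulting reverse plane partitions assemble compatibly into a single map on $\pos$.

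First, I would verify that both $X^\even_\Xi$ and $X^\odd_\Xi$ lie in $\mathcal{C}_{\Xi,m}$. An indecomposable summand of $X \in \cat$ corresponds to a positive root having $\alpha_m$ in its support; under the assumed equivalence $\mathcal{R}_Q \simeq \mathcal{R}_\Xi$, this indecomposable is sent either to an object of $\rep(\Xi)$ or to the shift of one, without altering the underlying positive root (and in particular, without altering the support at $m$). Hence both $X^\even_\Xi$ and $X^\odd_\Xi$ belong to $\mathcal{C}_{\Xi,m}$, and Theorem~\ref{th-rpp} produces reverse plane partitions $\rho_{\Xi,m}(X^\even_\Xi)$ and $\rho_{\Xi,m}(X^\odd_\Xi)$ on the poset $\textsf{P}_{\Xi,m}$.

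Next I would identify $\textsf{P}_{\Xi,m}$ with $\pos$ as sets. Since both posets are indexed by the positive roots with $\alpha_m$ in their support, and this set depends only on the underlying graph and the vertex $m$, the two posets share the same ground set and differ only in their partial orders. Let $F \subset \pos$ be the subset corresponding to those indecomposables of $\cat$ whose image under the equivalence is an unshifted object of $\rep(\Xi)$, and let $I = \pos \setminus F$ be its complement. The natural technical conditions on the equivalence (which one expects to be compatibility with a sequence of tilts or reflections from $Q$ to $\Xi$) should imply that $F$ is an order filter in $\pos$, and that the partial orders on $\pos$ and $\textsf{P}_{\Xi,m}$ agree when restricted to $F$ or to $I$. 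Under this identification, placing $\rho_{\Xi,m}(X^\even_\Xi)$ on $F$ and $\infty - \rho_{\Xi,m}(X^\odd_\Xi)$ on $I$ yields a candidate map $\rho^\Xi_{Q,m}(X) \colon \pos \to \NN$.

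Finally, I would verify the order-reversing property. On each of $F$ and $I$ separately, it is inherited from Theorem~\ref{th-rpp} applied to $X^\even_\Xi$ and $X^\odd_\Xi$ respectively, together with the agreement of orders described above. Across the boundary, for any covering relation $p < q$ in $\pos$ with $p \in I$ and $q \in F$, the value at $p$ has the form $\infty - j$ for some $j \in \mathbb{N}$ while the value at $q$ is a non-negative integer, so the required inequality is automatic in $\NN$.

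The main obstacle is the middle step: proving that the categorical decomposition of objects in $\cat$ into even and odd parts (determined by $\Xi$) corresponds to an order filter/ideal decomposition of $\pos$, and that the two minuscule-poset orders on $\pos$ and $\textsf{P}_{\Xi,m}$ agree on each piece. This is a combinatorial statement about how minuscule posets transform under orientation change, and its proof will rely on the explicit realization of $\pos$ via positive roots together with a precise formulation of the technical conditions imposed on the equivalence $\mathcal{R}_Q \simeq \mathcal{R}_\Xi$.
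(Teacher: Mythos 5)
Your opening step is where this breaks down, and it is the step everything else rests on. The identification of $\rep \Xi$ with a subcategory of $\mathcal R_Q$ is effected by compositions of derived reflection functors, which act on classes in the Grothendieck group by simple reflections; they do not preserve dimension vectors, and in particular they do not preserve support at $m$. So an indecomposable summand of $X\in\cat$, rewritten as a (possibly shifted) $\Xi$-representation, need not have $m$ in its support as a $\Xi$-representation, and $X^\even_\Xi$, $X^\odd_\Xi$ need not lie in $\mathcal C_{\Xi,m}$; Theorem~\ref{th-rpp} applied to the quiver $\Xi$ is therefore not available. The paper's Figure~\ref{Q_xi_figure} already gives a counterexample: for $Q^1=1\leftarrow 2\leftarrow 3$ with $m=2$ and $\Xi=Q^3$, the objects $010$ and $111$ of $\mathcal C_{Q^1,2}$ are identified with the $Q^3$-representations $100$ and $001$, neither of which is supported at vertex $2$. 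This also undercuts your later steps: the relevant matching of indecomposables is the one induced by the categorical equivalence, not the matching of equal positive roots between $\pos$ and $\textsf{P}_{\Xi,m}$; and even granting some identification, there is no argument that the filling produced from $X^\even_\Xi$ is supported only on the filter $F$ --- what is actually needed is that the number of parts of its Jordan partition at each vertex $i$ is at most $|\pi^{-1}(i)\cap F|$, which is in general strictly smaller than the corresponding fibre of a full minuscule poset.

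The missing ingredient is precisely Theorem~\ref{thm_rsk}: when $\rep\Xi$ is to the right of $\rep Q$, the generic Jordan data of any object of $\rep\Xi\cap\cat$ (with the vertices of $\Xi$ matched to those of $Q$ through $\tau$-orbits of projectives) fits into the order filter $\textsf{P}^{\Xi}_{Q,m}$ of $\pos$ and yields a reverse plane partition there. This ``fits in'' statement is not formal: it is proved by induction along reflection functors, using Lemma~\ref{comb-alt} together with Theorems~\ref{th-ref-canon} and~\ref{th-ref-nil}. Granting it, the proposition (Proposition~\ref{derived-fits} in its precise form) follows quickly, and this is the paper's route: the even part is a direct application, the odd part follows by passing to the dual algebra, and in the case where $\mathcal C^\even_\Xi$ lies to the left of $\mathcal C^\odd_\Xi$ one reruns the induction starting from the right, where the simples being added are injectives $S_{\psi(i)}$ rather than projectives (Lemma~\ref{inj-proj}), which is compensated by the antiautomorphism $\Ant$ of $\pos$ --- a case your sketch does not address at all.
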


(See Proposition \ref{derived-fits} for a more precise statement.) We also have the following converse.

\begin{proposition}\label{intro-enough-X}
Given any reverse plane partition $\rho$ on $\pos$ with entries in $\NN$, there exists $\Xi$ derived equivalent to $Q$ and an $X\in\cat$ such that $\rho=\rho_{Q,m}^\Xi(X)$.
\end{proposition}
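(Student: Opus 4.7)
The plan is to invert the construction: read off from $\rho$ a splitting of $\pos$, choose $\Xi$ whose derived equivalence realizes this splitting, and invoke Theorem~\ref{th-rpp}(3) on each piece. Set $I := \rho^{-1}\{\infty, \infty-1, \infty-2, \dots\}$ and $F := \pos \setminus I$. Because every value $\infty - k$ strictly exceeds every finite value in $\NN$ and $\rho$ is order-reversing, $I$ is an order ideal of $\pos$ and $F$ is its complementary order filter. The restriction $\rho|_F$ is then an ordinary $\Nbb$-valued reverse plane partition of $F$, and the function $p \mapsto \infty - \rho(p)$ on $I$ is an $\Nbb$-valued reverse plane partition of $I$ equipped with the opposite order (which, after accounting for the $[1]$-shift, is exactly what should become the Jordan form data of the odd part).

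\medskip

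The main structural step is to find an orientation $\Xi$ (among those satisfying the technical conditions on the derived equivalence) so that the indecomposables of $\cat$ corresponding to $F$ lie in $\rep(\Xi)$ while those corresponding to $I$ lie in $\rep(\Xi)[1]$. I expect that sequential BGP reflections at sinks or sources of the current quiver (other than $m$) realize every order-ideal decomposition of $\pos$: each such reflection should move a single extremal element between the even and odd halves, so starting from $\Xi = Q$ (where the odd ideal is empty) and reflecting along a linear extension of $I$ produces a $\Xi$ with odd ideal equal to $I$. Once such a $\Xi$ is fixed, Theorem~\ref{th-rpp}(3) applied to $\mathcal{C}_{\Xi, m}$ yields a unique $X^\even_\Xi \in \mathcal{C}_{\Xi, m}$ whose reverse plane partition agrees with $\rho|_F$ on the $F$-portion of $\textsf{P}_{\Xi, m}$ (and vanishes on the $I$-portion), and a unique $X^\odd_\Xi \in \mathcal{C}_{\Xi, m}$ whose reverse plane partition realizes $p \mapsto \infty - \rho(p)$ on $I$ under the opposite order. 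Setting $X := X^\even_\Xi \oplus X^\odd_\Xi[1] \in \mathcal{R}_\Xi \simeq \mathcal{R}_Q$, a direct unpacking of the definition of $\rho_{Q,m}^\Xi$ yields $\rho_{Q,m}^\Xi(X) = \rho$, with every summand lying in $\cat$ by construction.

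\medskip

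The hardest step is the correspondence between sequences of BGP reflections and order-ideal decompositions of $\pos$. To make it precise one must track the map $\pi : \pos \to Q_0$ through reflections and verify that extremal elements of $\pos$ correspond to sink or source vertices of the ambient quiver, so that a reflection at such a vertex identifies the new minuscule poset with the old one up to moving a single element across the even--odd interface. One also needs surjectivity: that every order ideal of $\pos$ does arise in this way, i.e.\ that after reflecting along a linear extension of $I$, no extremal element is obstructed. Once this combinatorial dictionary is in place, the rest is a direct application of Theorem~\ref{th-rpp}(3).
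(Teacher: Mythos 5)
Your skeleton is the same as the paper's: split $\pos$ into the order ideal $I$ carrying the values of the form $\infty-k$ and the complementary order filter $F$ carrying the finite values, choose $\Xi$ so that the indecomposables of $\cat$ corresponding to $F$ lie in $\rep\Xi$ and those corresponding to $I$ lie in $\rep\Xi[1]$ (the paper simply records that every order filter of $\pos$ arises as $\textsf{P}_{Q,m}^{\Xi}$ for some $\Xi$, and your reflect-along-a-linear-extension sketch is exactly the mechanism behind the induction in Theorem \ref{thm_rsk}), and then set $X=X^{\textup{even}}\oplus X^{\textup{odd}}[1]$. The genuine gap is in how you produce the two halves. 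You invoke Theorem \ref{th-rpp}(3) for the quiver $\Xi$, i.e.\ the bijection between $\mathcal{C}_{\Xi,m}$ (sums of indecomposable $\Xi$-representations supported at $m$) and reverse plane partitions on the full minuscule poset of $\Xi$, applied to $\rho|_F$ extended by zero. But the even part you need is an object of $\rep\Xi\cap\cat$, and the indecomposables of that category are in general \emph{not} supported at $m$ as $\Xi$-representations, hence are not objects of $\mathcal{C}_{\Xi,m}$ at all: in the paper's own example with $Q=1\leftarrow 2\leftarrow 3$, $m=2$, $\Xi=1\leftarrow 2\rightarrow 3$, the three indecomposables of $\rep\Xi\cap\cat$ are, as $\Xi$-representations, $001$, $100$, $111$. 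So the bijection for $\mathcal{C}_{\Xi,m}$ cannot output the required $X^{\textup{even}}_\Xi$, and conversely the object it does produce from your filling need not lie in $\cat$. The zero-extension is also shaky on its own terms: placing zeros on elements lying below positive entries is not order-reversing, so ``agrees with $\rho|_F$ on $F$, vanishes on $I$'' need not be a reverse plane partition, and in any case the identification of the ``$F$-portion of $\textsf{P}_{\Xi,m}$'' is not the identification (via $\tau$-orbits) that enters the definition of $\rho^{\Xi}_{Q,m}$.

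What the argument actually requires --- and what the paper uses --- is the relative statement, Theorem \ref{thm_rsk}(2): for $\rep\Xi$ to the right of $\rep Q$, the map $M\mapsto\rho^{\Xi}_{Q,m}(M)$ is a bijection from isomorphism classes in $\rep\Xi\cap\cat$ to reverse plane partitions on the order filter $\textsf{P}_{Q,m}^{\Xi}=F$, with the Jordan data computed from $M$ viewed as a $\Xi$-representation and inserted into $F$. This is not recoverable from Theorem \ref{th-rpp} applied to $\Xi$ (Theorem \ref{th-rpp} is the special case $\Xi=Q$ of Theorem \ref{thm_rsk}, a statement about the full poset, not about $F$). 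Applying Theorem \ref{thm_rsk} to $\rho|_F$ produces $X^{\textup{even}}$, the dual statement (as used for the odd half in Proposition \ref{derived-fits}) applied to $\textsf{p}\mapsto\infty-\rho(\textsf{p})$ on $I$ produces $X^{\textup{odd}}$, and then $X=X^{\textup{even}}\oplus X^{\textup{odd}}[1]$ satisfies $\rho^{\Xi}_{Q,m}(X)=\rho$; this is precisely the paper's proof. So: correct decomposition and correct choice of $\Xi$, but the citation of Theorem \ref{th-rpp}(3) for $\mathcal{C}_{\Xi,m}$ is a wrong step and must be replaced by Theorem \ref{thm_rsk} and its dual.
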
 

\subsection{Periodicity of toggling}
Fix $N\in\mathbb{N}$. Let $\P$ be a poset and $\rho: \textsf{P} \to [0,N]$ be a reverse plane partition.  
For $\textsf{x}$ an element of $\textsf{P}$, we define the \textit{toggle} of $\rho$ at $\textsf{x} \in \textsf{P}$ by
$$\begin{array}{rcl}
t_\textsf{x}\rho(\textsf{y}) & = & \left\{\begin{array}{lcl} \displaystyle \max_{\textsf{y} \lessdot \textsf{y}_1}\rho(\textsf{y}_1) +\min_{\textsf{y}_2\lessdot \textsf{y}}\rho(\textsf{y}_2)-\rho(\textsf{y}) & : & \text{if $\textsf{y} = \textsf{x}$} \\ \rho(\textsf{y}) & : & \text{if $\textsf{y} \neq \textsf{x}$,}\end{array}\right.
\end{array}$$
where $\textsf{y}$ is any element of $\textsf{P}$.  If $\textsf{y}$ is maximal,
we interpret
$\max_{\textsf{y} \lessdot \textsf{y}_1}\rho(\textsf{y}_1)$ as 0, and if $\textsf{y}$ is minimal, we 
interpret $\min_{\textsf{y}_2\lessdot \textsf{y}}\rho(\textsf{y}_2)$ as $N$.
Since $\rho$ is a reverse plane partition, so is $t_\textsf{x} \rho.$ Additionally, observe that $t_\textsf{x}\circ t_\textsf{x}(\rho) = \rho$. This is the piecewise-linear toggle operation considered by Einstein and Propp (up to rescaling, and restricted to lattice points) \cite{einstein2013combinatorial}.
 
Note that $t_{\sf x}$ and $t_{\sf y}$ commute unless $\sf x$ and $\sf y$ are related by a cover.  
For $\pos$, and $i$ a vertex of $Q$, the elements of $\pi^{-1}(i)$ are never related by a cover, so we can define $t_i$ as the composition of all $t_{\sf x}$ where ${\sf x} \in \pi^{-1}(i)$, without worrying about the order in which the composition is taken.

{Number the vertices of $Q$ in such a way that if there is an arrow from $j$ to $i$ then $j < i$.}  Define $\pro_Q=t_n\circ \dots \circ t_1$.
Define $h$ to be the Coxeter number of $Q$: by definition, this is the order of the product of the simple reflections in the Coxeter group, or, equivalently, the largest degree of the root system. We will prove the following theorem.

\begin{theorem}\label{intro-period-N} For $Q$ a Dynkin quiver and $m$ a minuscule vertex, $\pro_Q^h$ is the identity transformation on reverse plane partitions on
$\pos$ with entries in $[0,N]$.
\end{theorem}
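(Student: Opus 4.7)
The plan is to prove Theorem \ref{intro-period-N} by identifying the promotion operator $\pro_Q$ with the inverse Auslander--Reiten translation $\tau^{-1}$ on the root category $\mathcal R_Q = D^b(Q)/[2]$, and then invoking the classical periodicity $\tau^h \cong \mathrm{id}$ in $\mathcal R_Q$ for Dynkin $Q$. The vehicle is the extended parametrization $\rho_{Q,m}^\Xi(X)$, which, by Proposition~\ref{intro-enough-X}, lets every RPP (on $\pos$ with entries in $\NN$) be described in terms of a pair $(\Xi,X)$ consisting of an orientation $\Xi$ of the Dynkin diagram and an object $X \in \cat$.

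The first main step is to show that a single toggle $t_i$ corresponds to a BGP reflection at the vertex $i$: whenever $i$ is a sink (or source) of $\Xi$, one has
\[
t_i\bigl(\rho_{Q,m}^\Xi(X)\bigr) \;=\; \rho_{Q,m}^{\sigma_i\Xi}(X),
\]
where on the right-hand side the reflection functor $\mathcal R_\Xi \xrightarrow{\sim} \mathcal R_{\sigma_i\Xi}$ is implicit. This is a local claim concerning only the fibre $\pi^{-1}(i) \subset \pos$, since the reflection functor alters the decomposition $X = X^\even_\Xi \oplus X^\odd_\Xi[1]$ only through the simple at $i$. Verifying it amounts to checking that the reflection's effect on the Jordan form data $\JFg$ at vertex $i$ matches the piecewise-linear toggle formula, with the boundary conventions $\max = 0$ at a maximum and $\min = N$ at a minimum of $\pos$ corresponding to the ``empty'' contributions from past the extremes of the $(X^\even_\Xi, X^\odd_\Xi)$ splitting.

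The second step iterates the first. With the convention $j \to i \Rightarrow j < i$, the vertex $i$ is always a source of the partially reflected quiver $\sigma_{i-1}\cdots\sigma_1 Q$, so $\pro_Q = t_n \circ \cdots \circ t_1$ executes a complete Coxeter cycle of reflections. After the cycle the orientation returns to $Q$, and by the standard theory of Coxeter functors the composite self-equivalence of $\mathcal R_Q$ is $\tau^{-1}$. Thus, under the parametrization, $\pro_Q$ acts by $(Q, X) \mapsto (Q, \tau^{-1}X)$. For Dynkin $Q$ one has $\tau^h \cong [-2]$ in $D^b(Q)$, which descends to $\tau^h \cong \mathrm{id}$ in $\mathcal R_Q = D^b(Q)/[2]$, so $\pro_Q^h$ fixes every RPP in the image of the $\rho_{Q,m}^\Xi$-parametrization. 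The statement for entries in $[0,N]$ should then follow either by matching the entry-set $[0,N]$ to an appropriate subrange of $\NN$ for which the parametrization and the toggle boundary conventions align, or by a density argument: $\pro_Q^h$ is a piecewise-linear self-map of the polytope $[0,N]^{\pos}$ that is the identity on a dense subset, hence the identity everywhere.

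The main obstacle will be the bookkeeping in the first step: pinning down precisely how the reflection functor $R_i^+$ transforms the multi-partition Jordan data at vertex $i$, and matching this transformation, including its behaviour at the extremes of $\pos$, to the piecewise-linear toggle formula. Once this local verification is in hand, the remaining arguments are formal, relying on the standard properties of Coxeter functors and on the fact that $\tau$ has order exactly $h$ on the root category of a Dynkin quiver.
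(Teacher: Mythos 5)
Your plan follows the paper's own route: your step 1 is Theorem \ref{tog-ref}, its iteration over a full source-to-sink sweep is Corollary \ref{pro_-1}, and combining this with the order-$h$ statement for $\tau$ on the root category (Lemma \ref{order-cox}) and the surjectivity of the parametrization (Proposition \ref{intro-enough-X}) is exactly the paper's Theorem \ref{thm_proh_is_Id}, i.e.\ periodicity for entries in $\NN$. One remark on your step 1: the verification is not purely local to the fibre $\pi^{-1}(i)$ computed inside $X^\even_\Xi$ and $X^\odd_\Xi$ separately; at the element of $\pos$ that migrates between $\PP^\even_\Xi$ and $\PP^\odd_\Xi$ (when $S_i$ or $S_i'$ lies in $\cat$) the toggle mixes data from the two parts, and the paper checks the formula there via the inductive description of $\rho_{Q,m}$, on top of the interlacing results (Theorems \ref{th-ref-canon}, \ref{th-ref-nil}, Lemma \ref{comb-alt}); so this is substantially more than bookkeeping, as you half-suspect, though you correctly locate it.

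The genuine gap is the last step, which is precisely the content of the statement you were asked to prove: the passage from entries in $\NN$ to entries in $[0,N]$ is left at ``should follow.'' The two promotions are different operators: at a minimal element of $\pos$ the $[0,N]$-toggle uses the ceiling $N$ while the $\NN$-toggle uses $\infty$, and along the promotion orbit entries of the form $\infty-m$ genuinely appear (summands move to the odd part), so the two orbits differ entrywise and there is no subrange of $\NN$ on which the two conventions literally align. For the same reason the density fallback is unfounded: the $\NN$-result does not exhibit any subset of $[0,N]^{\pos}$ on which the $[0,N]$-promotion is already known to be the identity, so there is nothing to extend by piecewise-linearity or continuity. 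What is needed is a commutation statement for the substitution $\infty-m\mapsto N-m$: this is the paper's Lemma \ref{lem:inftytoN}, which shows that ``$N$ close enough to infinity'' is preserved by every toggle and that the substitution intertwines the $\NN$-toggles with the $[0,N]$-toggles; applying it throughout the computation of $\pro_Q^h$ converts Theorem \ref{thm_proh_is_Id} into Theorem \ref{intro-period-N}. The lemma is short, but it is the actual proof of this theorem, and your proposal does not supply it or an equivalent argument.
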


Theorem~\ref{intro-period-N} has already been established in type $A$. We refer the reader to Remark~\ref{rem_prev_periodicity_results} for a more detailed explanation of its overlap with existing results. 											
Note that the definition of toggling easily extends to reverse plane partitions with entries in $\NN$.
For $i\leq k$ in $\mathbb{N}$, the operation sending 
$j$ to $i+k-j$ for $i\leq j\leq k$ can be understood as mapping the interval from $i$ to $k$ back to itself while reversing the order. The same idea applies to $i\leq j\leq k \in \NN$.
See Figure~\ref{fig:toggle} for an example.

\subsection{Toggling and the root category}

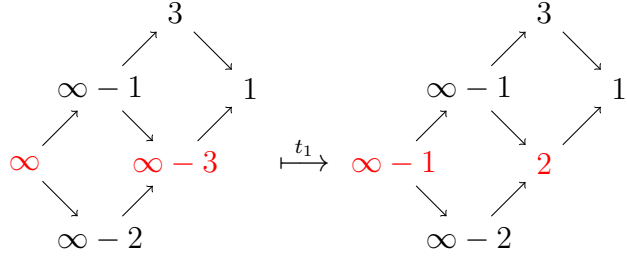
\begin{figure}

\begin{tikzpicture}
\node (a) at (0,0) {\fbox{$\infty$}};
\node (b) at (1,1) {$\infty-1$};
\node (c) at (1,-1) {$\infty-2$};
\node (d) at (2,2) {3};
\node (e) at (2,0) {\fbox{$\infty-3$}};
\node (f) at (3,1) {1};
\draw[->] (a)--(b);
\draw[->] (b)--(d); 
\draw[->] (d)--(f);
\draw[->] (a)--(c);
\draw[->] (b)--(e);
\draw[->] (e)--(f);
\draw[->] (c)--(e);
\end{tikzpicture}
\raisebox{.47in}{$\stackrel{t_1}{\longmapsto}$}
\begin{tikzpicture}
\node (a) at (0,0) {\fbox{$\infty-1$}};
\node (b) at (1,1) {$\infty-1$};
\node (c) at (1,-1) {$\infty-2$};
\node (d) at (2,2) {3};
\node (e) at (2,0) {\fbox{$2$}};
\node (f) at (3,1) {1};
\draw[->] (a)--(b);
\draw[->] (b)--(d);
\draw[->] (d)--(f);
\draw[->] (a)--(c);
\draw[->] (b)--(e);
\draw[->] (e)--(f);
\draw[->] (c)--(e);
\end{tikzpicture}
\caption{We apply $t_1$ to a reverse plane of shape $\textsf{P}_{Q,1}$, where $Q=2\rightarrow 1 \leftarrow 3 \leftarrow 4$. The values corresponding to the elements of $\pi^{-1}(1)$ are shown in boxes.}
\label{fig:toggle}
\end{figure}

{ Suppose $\Xi$ is a reorientation of $Q$, and as above, 
choose an identification of
$\rep \Xi$ as a subcategory of $\mathcal R_Q$.  Let $i$ be a source of $\Xi$.  Let
$\Xi'$ be obtained from $\Xi$ by reversing all arrows incident to vertex $\Xi$.  (We write
$\Xi'=\sigma_i(\Xi)$.))  It is possible to choose an identification of $\rep \Xi'$ as a subcategory of $\mathcal R_Q$ such that the indecomposable objects of $\rep \Xi$ and of $\rep \Xi'$ coincide except for the simple projective representation at vertex $i$ of $\rep \Xi'$ and the simple injective representation at vertex $i$ of $\rep \Xi$.

Given an object $X\in \cat$, we want to describe the relationship between $\rho_{Q,m}^\Xi(X)$ and $\rho_{Q,m}^{\Xi'}(X)$.  The relationship turns out to be very simple:

\begin{theorem}\label{tog-ref} Let $X\in \cat \subset \mathcal R_Q$. {Let $\Xi$ be a quiver with the same underlying graph as $Q$, let $i$ be a source of $\Xi$, and let $\Xi'=\sigma_i(\Xi)$, and let $\rep \Xi$ and $\rep \Xi'$ be identified with subcategories of $\mathcal R_Q$ as above. Then} $\rho_{Q,m}^{\Xi'}(X) = t_i\rho_{Q,m}^\Xi(X)$.\end{theorem}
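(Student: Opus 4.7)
The plan is to reduce the theorem to a local computation on the fibre $\pi^{-1}(i)$. The key structural observation is that the $\Xi$- and $\Xi'$-decompositions of $X \in \mathcal{R}_Q$ differ in a very restricted way: since the identifications of $\rep\Xi$ and $\rep\Xi'$ in $\mathcal{R}_Q$ coincide on all indecomposables except $S_i^\Xi$ and $S_i^{\Xi'}$ (which are shifts of each other in the root category), writing $X^\even_\Xi = Y^\even\oplus(S_i^\Xi)^a$ and $X^\odd_\Xi = Y^\odd\oplus(S_i^\Xi)^b$ with $Y^\even, Y^\odd$ containing no $S_i^\Xi$ summand, one obtains $X^\even_{\Xi'} = Y^\even\oplus(S_i^{\Xi'})^b$ and $X^\odd_{\Xi'} = Y^\odd\oplus(S_i^{\Xi'})^a$, where $Y^\even$ and $Y^\odd$ are interpreted as $\Xi'$-representations via BGP reflection at the source $i$.

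Next, I would verify that the Jordan form data agree away from vertex $i$. The BGP reflection preserves vector spaces, maps, and compatibility of endomorphisms at every vertex $j \neq i$, and the simple $S_i$ has no support at $j$ in either orientation. Consequently $\JFg(X^\even_\Xi)^j = \JFg(X^\even_{\Xi'})^j$ and $\JFg(X^\odd_\Xi)^j = \JFg(X^\odd_{\Xi'})^j$ for all $j \neq i$, so $\rho_{Q,m}^\Xi(X)$ and $\rho_{Q,m}^{\Xi'}(X)$ already agree on every fibre $\pi^{-1}(j)$ with $j \neq i$. This matches the toggle $t_i$, which acts as the identity outside $\pi^{-1}(i)$.

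The substantive work is then on the fibre $\pi^{-1}(i)$ itself. Setting $\lambda = \JFg(X^\even_\Xi)^i$ and $\mu = \JFg(X^\odd_\Xi)^i$, the values of $\rho_{Q,m}^\Xi(X)$ on the chain $\pi^{-1}(i) = \{\mathsf{x}_1 < \cdots < \mathsf{x}_k\}$ are the entries of $\lambda$ (filling the upper portion) together with the entries $\infty - \mu_r$ (filling the lower portion), arranged in the order-reversing pattern required of a reverse plane partition. Using the explicit algorithm (Theorem~\ref{thm_4_7_alg}) that recovers Jordan form data from indecomposable multiplicities, I would express $\lambda, \mu$ and their $\Xi'$-counterparts $\lambda', \mu'$ as piecewise-linear functions of the summand multiplicities of $X$, and then track how the swap of $(S_i^\Xi)^a$-summands with $(S_i^{\Xi'})^b$-summands modifies these partitions. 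The goal is to verify that the resulting change in the values on $\pi^{-1}(i)$ coincides with the piecewise-linear toggle $v \mapsto \max(\text{values at upper neighbors}) + \min(\text{values at lower neighbors}) - v$.

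The hardest step is precisely this matching. I expect the dictionary between the BGP reflection formula (which replaces the vector space $V_i$ by $\mathrm{coker}(V_i \to \bigoplus_{j\colon i\to j} V_j)$) and the toggle formula (which involves $\max/\min$ over covering and covered elements in $\pos$) to be the core of the argument: the adjacent fibres $\pi^{-1}(j)$ for $j$ neighboring $i$ in $Q$ contain exactly the data needed to write down the new partitions at $i$. A cautious strategy is to first establish the correspondence on indecomposable test objects, where both sides reduce to small closed-form expressions, and then extend by the piecewise-linear additivity of the Jordan form data in the summand multiplicities.
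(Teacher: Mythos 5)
Your setup is sound and matches the paper's: the two decompositions of $X$ differ only in the exchange of $S_i$- and $S_i'$-summands, the even/odd parts for $\Xi'$ are obtained from those for $\Xi$ by reflection at $i$ plus this exchange, and the Jordan data away from vertex $i$ (hence the filling on every fibre $\pi^{-1}(j)$, $j\neq i$) is unchanged. But everything after that is a statement of intent rather than a proof, and the one concrete strategy you offer for the core step would fail. You propose to verify the identity at $\pi^{-1}(i)$ on indecomposable test objects and then ``extend by the piecewise-linear additivity of the Jordan form data in the summand multiplicities.'' There is no such additivity: $\JFg$ is a genuinely piecewise-linear function of the multiplicities (already in the running example it involves $\max$ and $\min$), and both $\rho^{\Xi}_{Q,m}$ and $t_i$ are nonlinear, so agreement on indecomposables implies nothing about direct sums. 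Re-deriving the reflection-versus-toggle dictionary at vertex $i$ directly from Theorem \ref{thm_4_7_alg} would amount to re-proving Theorem \ref{thm_rsk}, which is exactly the work the paper avoids redoing.

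The two points your plan does not address are precisely where the actual content lies. First, the paper reuses the inductive proof of Theorem \ref{thm_rsk} (built on Theorem \ref{th-ref-nil} and Lemma \ref{comb-alt}) to know that $R_i^{-}$ corresponds to toggling at $i$ on the even part and on the odd part \emph{separately}; what then has to be checked is that toggling the combined $\NN$-valued filling agrees with toggling the two halves separately. This is not automatic: at elements of $\pi^{-1}(i)$ near the even/odd boundary the toggle's $\max$/$\min$ could a priori mix finite entries with entries of the form $\infty-j$, and ruling this out requires the interlacing statement of Lemma \ref{comb-alt} (equivalently the two-neighbourly structure of $\pos$); your proposal never confronts this interaction. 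Second, when $S_i'$ (or $S_i$) lies in $\cat$, one element $\x$ of $\pi^{-1}(i)$ migrates from $\P^{\odd}_{\Xi}$ to $\P^{\even}_{\Xi'}$, and the value there must be computed directly: the inductive procedure gives $\rho^{\Xi}_{Q,m}(X)(\x)=p+\max_{\y\lessdot\x}\rho^{\Xi}_{Q,m}(X)(\y)$ with $p$ the multiplicity of $S_i'$ in $X$, so the toggle produces $p+\min_{\z\gtrdot\x}\rho^{\Xi}_{Q,m}(X)(\z)$, which is what the $\Xi'$-side computation yields. Your proposal acknowledges that the multiplicities $a,b$ must be ``tracked'' but identifies neither the distinguished element $\x$ where they enter nor the mechanism by which the toggle reproduces them, so as written the argument has a genuine gap at its decisive step.
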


Now suppose that we label the vertices of $\Xi$ in such a way that if there is an arrow from $j$ to $i$ then $j < i$.  It follows that the first vertex is a source.  After reversing all arrows incident with 1, the second vertex will be a source, and so forth. The effect is that we can consider applying the above theorem successively at vertex 1, vertex 2, and so on, up to vertex $n$.  The quiver that results from reversing the arrows at each vertex is isomorphic to the original quiver, but the final term in the sequence of subcategories $\rep \Xi$, $\rep \Xi'$, \ldots is not $\rep \Xi$ again, though it is equivalent to $\rep \Xi$: in fact, it is $\tau \rep \Xi$. Throughout this paper, $\tau$ denotes the \textit{Auslander--Reitan translation}. Abusing notation, we write $\rho_{Q,m}^{\tau\Xi}(X)$ for the reverse plane partition associated to the splitting of $\mathcal R_Q$ into $\tau \rep \Xi$ and $\tau \rep \Xi[1]$.  Applying the previous theorem successively at vertex $1, 2, \ldots, n$ yields the corollary below.

\begin{corollary}\label{pro_-1} {Let $X\in \cat \subset \mathcal R_Q$.  Let $\Xi$ be a quiver with the same underlying graph as $Q$. Then $\rho_{Q,m}^{\tau\Xi}(X) = \pro_Q\rho_{Q,m}^{\Xi}(X)$.}
\end{corollary}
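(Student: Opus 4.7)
\textit{Proof plan.} The plan is to iterate Theorem~\ref{tog-ref} one vertex at a time. Number the vertices of $\Xi$ so that $j<i$ whenever there is an arrow $j\to i$ in $\Xi$; such an ordering exists because $\Xi$ is Dynkin and hence acyclic. Set $\Xi_0:=\Xi$ and, inductively, $\Xi_k:=\sigma_k(\Xi_{k-1})$ for $k=1,\dots,n$.

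The first technical point to verify is that vertex $k$ is a source of $\Xi_{k-1}$ at each stage. In $\Xi$, every edge incident to $k$ runs either $j\to k$ with $j<k$ or $k\to j$ with $j>k$. The reflections $\sigma_1,\dots,\sigma_{k-1}$ affect only arrows incident to vertices $1,\dots,k-1$, and each of these vertices is reflected exactly once. Consequently, after applying them, every edge between $k$ and a lower-labelled vertex has been flipped exactly once, hence now points out of $k$, while edges to higher-labelled vertices remain outgoing. Thus $k$ is a source of $\Xi_{k-1}$ and Theorem~\ref{tog-ref} applies at each step, giving $\rho_{Q,m}^{\Xi_k}(X)=t_k\,\rho_{Q,m}^{\Xi_{k-1}}(X)$. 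Composing yields
\[
\rho_{Q,m}^{\Xi_n}(X) \;=\; (t_n\circ\cdots\circ t_1)\,\rho_{Q,m}^{\Xi}(X) \;=\; \pro_Q\,\rho_{Q,m}^{\Xi}(X).
\]
If the labelling chosen for $\Xi$ differs from the one used to define $\pro_Q$, I would note that any two source-admissible orderings of the vertices differ by transpositions of consecutive non-adjacent vertices, and that $t_i$ and $t_j$ commute when $i$ and $j$ are non-adjacent in $Q$, so the two compositions agree.

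The step I expect to be the main obstacle — and the conceptual heart of the argument — is the identification $\rep\Xi_n = \tau\rep\Xi$ as subcategories of $\mathcal{R}_Q$. Since every edge of $\Xi$ joins two vertices with distinct labels, every edge is flipped exactly twice during $\sigma_1,\dots,\sigma_n$, so $\Xi_n$ and $\Xi$ coincide as plain quivers; the content lies entirely in the embedding into $\mathcal{R}_Q$. At each reflection $\sigma_i$, the simple injective at $i$ in $\rep\Xi_{i-1}$ is replaced (via the compatibility conditions underlying Theorem~\ref{tog-ref}) by the simple projective at $i$ in $\rep\Xi_i$, which in $\mathcal{R}_Q$ amounts to a shift by the AR translate of the corresponding indecomposable. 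Carried out at every vertex in a source-admissible order, the cumulative effect on the whole subcategory is precisely the Auslander--Reiten translation, so $\rep\Xi_n=\tau\rep\Xi$ inside $\mathcal{R}_Q$. This matches the notation $\rho_{Q,m}^{\tau\Xi}$ set up just before the statement, and combining with the toggle identity above gives the corollary. The work in fleshing this out is largely bookkeeping that the technical hypotheses of Theorem~\ref{tog-ref} (the ``natural technical conditions'' on the identification of $\mathcal{R}_\Xi$ with $\mathcal{R}_Q$) propagate cleanly along the sequence $\Xi_0,\Xi_1,\dots,\Xi_n$.
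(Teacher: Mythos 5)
Your overall strategy is exactly the paper's: iterate Theorem~\ref{tog-ref} along a sequence of reflections at successive sources, observe that the composition of the resulting toggles is promotion, and identify the final subcategory with $\tau\rep\Xi$; your inductive check that vertex $k$ is a source of $\Xi_{k-1}$ is correct and matches the paper's sketch. However, your way of reconciling the two orderings is wrong. It is true that two source-admissible orderings of the \emph{same} orientation differ by swaps of consecutive vertices that are non-adjacent in the underlying graph, and then the toggle compositions agree; but an ordering adapted to $\Xi$ and the ordering $1,\dots,n$ adapted to $Q$ (the one defining $\pro_Q$) are admissible for \emph{different} orientations when $\Xi$ is oriented differently from $Q$, they are not related by such swaps, and the compositions genuinely differ. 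Already for $Q=1\rightarrow 2$ with $m=2$, the poset $\pos$ is a two-element chain whose elements lie in the fibres of the two different vertices, so $t_1t_2\neq t_2t_1$; if $\Xi=\sigma_1(Q)$, iterating Theorem~\ref{tog-ref} forces you to toggle at vertex $2$ first and then at vertex $1$, so what your argument actually produces is $t_1t_2\,\rho^{\Xi}_{Q,m}(X)$, which in general is not $\pro_Q\rho^{\Xi}_{Q,m}(X)=t_2t_1\,\rho^{\Xi}_{Q,m}(X)$. The intended (and provable) content is that the toggles are composed in an order adapted to $\Xi$ --- this is precisely why the paper relabels the vertices of $\Xi$ so that its arrows go from lower to higher before applying Theorem~\ref{tog-ref} repeatedly --- and in the paper's actual use of the corollary (the proof of Theorem~\ref{thm_proh_is_Id}) the slices are $\tau^k\rep Q$, whose quiver is $Q$, so the $Q$-adapted order is simultaneously $\Xi$-adapted and no reconciliation is needed. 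Your commutation claim cannot substitute for this restriction, so as written this step is a genuine gap rather than bookkeeping.

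The second soft spot is the identification $\rep\Xi_n=\tau\rep\Xi$, which you assert ("the cumulative effect on the whole subcategory is precisely the Auslander--Reiten translation") rather than prove; that sentence is the statement in need of justification. The paper closes it by noting that the successive changes of slice are implemented by the derived reflection functors, whose composition $\DR^-_n\cdots\DR^-_1$ is the Coxeter functor $\cox$, and then invoking Lemma~\ref{lemma_cox_equals_tau}, which computes $\cox$ in terms of the Auslander--Reiten translation on the derived (hence root) category. With that citation your sketch becomes the paper's proof; without it, the step you yourself identify as the conceptual heart is left open.
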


This is useful because the order of $\tau$, as it acts on isomorphism classes of objects in $\mathcal R_Q$, is known to be $h$. Applying promotion $h$ times therefore corresponds to applying $\tau$ to $\rep \Xi$ that many times, which has the effect of doing nothing. 
The theorem below follows almost immediately: 

\begin{theorem}\label{thm_proh_is_Id}
Considering $\pro_Q$ as a permutation of reverse plane partitions on $\pos$ with entries in $\NN$, its order is $h$.  
\end{theorem}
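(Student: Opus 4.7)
The plan is to deduce Theorem~\ref{thm_proh_is_Id} almost immediately from Corollary~\ref{pro_-1} together with Proposition~\ref{intro-enough-X} and the classical fact that the AR translate $\tau$ acts with order exactly $h$ on the set of isomorphism classes of objects in the root category $\mathcal R_Q$. The first step and main content is the upper bound $\pro_Q^h = \mathrm{Id}$; the second step is a sharpness argument showing no smaller period works.

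For the upper bound, let $\rho$ be an arbitrary reverse plane partition on $\pos$ with entries in $\NN$. By Proposition~\ref{intro-enough-X}, I can produce a quiver $\Xi$ with the same underlying graph as $Q$, identified as usual as a subcategory of $\mathcal R_Q$, and an object $X \in \cat$ with $\rho = \rho_{Q,m}^{\Xi}(X)$. Iterating Corollary~\ref{pro_-1} $h$ times yields
\[
\pro_Q^h \rho \;=\; \rho_{Q,m}^{\tau^h \Xi}(X).
\]
Since $\tau^h$ fixes every isomorphism class in $\mathcal R_Q$, the subcategory $\tau^h \rep \Xi$ coincides, as an identified subcategory of $\mathcal R_Q$, with $\rep \Xi$. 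Consequently the decompositions $X \simeq X^{\even}_\Xi \oplus X^{\odd}_\Xi[1]$ and $X \simeq X^{\even}_{\tau^h\Xi} \oplus X^{\odd}_{\tau^h\Xi}[1]$ agree, as do the attached Jordan form data, so $\rho_{Q,m}^{\tau^h\Xi}(X) = \rho_{Q,m}^{\Xi}(X) = \rho$.

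For the lower bound, it suffices to exhibit a single reverse plane partition whose $\pro_Q$-orbit has length exactly $h$. A natural candidate is $\rho = \rho_{Q,m}^{Q}(P_m)$, where $P_m$ is the indecomposable projective at the minuscule vertex $m$, so that $\rho$ has entries in $\mathbb N$ (no $\infty$-type entries, since $P_m \in \rep Q$ has $P_m^{\odd}_Q = 0$). Applying the corollary, $\pro_Q^k \rho = \rho_{Q,m}^{\tau^k Q}(P_m)$. Because the $\tau$-orbit of $P_m$ in $\mathcal R_Q$ has length exactly $h$, for each $0 < k < h$ the object $P_m$, decomposed with respect to the rotated splitting $\tau^k \rep Q \oplus \tau^k \rep Q[1]$, acquires a nonzero odd part, which forces at least one entry of $\rho_{Q,m}^{\tau^k Q}(P_m)$ to be of the form $\infty - j$; in particular this RPP is not equal to $\rho$.

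The main obstacle is the sharpness step: one must verify that for every $0 < k < h$ the decomposition of $P_m$ in the shifted splitting really does produce at least one $\infty$-type entry. This should follow from a direct argument in the AR quiver of the Dynkin quiver $Q$, using that the $\tau$-orbit of any indecomposable in $\mathcal R_Q$ consists of $h$ distinct classes which alternate (in a manner controlled by the projective/injective dimension) between honest representations in $\rep Q$ and shifts of honest representations. Once this is in hand, the upper bound forces the orbit length to be exactly $h$, completing the proof.
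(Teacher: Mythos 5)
Your upper bound is correct and is essentially the paper's argument: write an arbitrary $\rho$ as $\rho_{Q,m}^{\Xi}(X)$ via Proposition~\ref{intro-enough-X}, iterate Corollary~\ref{pro_-1}, and use Lemma~\ref{order-cox} to see that $\tau^h\rep\Xi$ has the same indecomposables as $\rep\Xi$, so the associated reverse plane partition is unchanged.

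The sharpness step, however, has a genuine gap: the key claim that for every $0<k<h$ the object $P_m$ acquires a nonzero odd part with respect to the splitting $\tau^{k}\rep Q\oplus\tau^{k}\rep Q[1]$ is false in general, and the picture you invoke to justify it (that a $\tau$-orbit in $\mathcal R_Q$ ``alternates'' between representations and shifts) is not accurate. A $\tau$-orbit in $\mathcal R_Q$ consists of a consecutive block of modules, namely $P_j,\tau^{-1}P_j,\dots,I_{\psi(j)}$, followed by a consecutive block of shifted modules; so whenever $P_m$ is not injective there are values $0<k<h$ (in whichever direction convention one uses for promotion) with $\tau^{\mp k}P_m$ still a module, i.e.\ $P_m$ lies in the shifted heart and its odd part is zero. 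Already the paper's running example shows this: for $Q=1\rightarrow 2\leftarrow 3$, $m=2$, one has $P_2=S_2$ and $\tau^{-1}P_2=I_2=111\in\rep Q$, so after one promotion the heart contains $P_2$ and your mechanism produces no entry of the form $\infty-j$, while $h=4$. The paper avoids this by taking $X=S_m$ and arguing via the bijection of Theorem~\ref{thm_rsk} for the shifted heart: if $\pro_Q^k$ fixes the (all-finite) reverse plane partition of $S_m$, then the odd region $\P_{}^{\odd}$ must be empty and the even-part bijection forces $S_m\simeq\tau^{\pm k}S_m$, so $h\mid k$ by Lemma~\ref{order-cox}. If you want to keep your ``an $\infty$-type entry must appear'' mechanism, the correct source of such entries is not the odd part of $X$ but the odd region of the poset itself: every element of $\P^{\odd}$ carries an entry $\infty-j$ (including the padding), so it suffices to show $\cat\not\subseteq\tau^{\pm k}\rep Q$ for $0<k<h$, which follows because $\cat$ contains both the projective $P_m$ and the injective $I_m$, and their positions in their $\tau$-orbits prevent any proper shift of $\rep Q$ from containing both. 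As written, though, your lower-bound argument does not go through.
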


Finally, from this, we deduce the more conventional periodicity result, Theorem \ref{intro-period-N} stated above.
}

\subsection{Combinatorial applications}
As an enumerative corollary of Theorem~\ref{th-rpp}, reverse plane partitions for minuscule posets have the following beautiful generating function, originally established by Proctor in \cite{proctor1984bruhat}. 
\begin{corollary}\label{thm_gen_fctn} For $Q$ a Dynkin quiver and $m$ a minuscule vertex, we have
\[\sum_{\rho\in \RPP(\pos)} \prod_{i=1}^n q_i^{|\rho_{i}|} = \sum_{X\in \cat} \prod_{i=1}^n q_i^{\textbf{dim}(X)_i}=\prod_{\textsf{u}\in \pos} \frac{1}{1- \prod_{i=1}^nq_i^{\textbf{dim}(M_\textsf{u})_i}},\]
where we write $|\rho_{i}|$ for the sum of the values $\rho(\textsf{x})$ over all $\textsf{x} \in \pi^{-1}(i)$. The second sum is over isomorphism classes of representations in $\cat$. In the third sum, $M_\textsf{u}\in\cat$ is the indecomposable representation of $Q$ corresponding to $\textsf{u}\in \pos$. 
 \end{corollary}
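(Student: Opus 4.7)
The plan is to string together the three expressions by exploiting, for the first equality, the bijection of Theorem~\ref{th-rpp}(3), and, for the second equality, the unique decomposition of objects in $\cat$ into indecomposables together with a standard geometric-series computation. The bulk of the work is really packaged into Theorem~\ref{th-rpp}; what remains is bookkeeping about weights.

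For the first equality, let $X \in \cat$ and set $\rho = \rho_{Q,m}(X)$. By construction, the multiset of values $\{\rho(\textsf{x}) : \textsf{x} \in \pi^{-1}(i)\}$ consists of the parts of the partition $\JFg(X)^i$, padded by zeros if $|\pi^{-1}(i)|$ exceeds the number of parts. Hence
\[
|\rho_i| \;=\; \sum_{\textsf{x}\in\pi^{-1}(i)} \rho(\textsf{x}) \;=\; |\JFg(X)^i| \;=\; \dim_{\fk}(X_i),
\]
the last equality because $\JFg(X)^i$ is the Jordan type of a nilpotent endomorphism of the $\dim_{\fk}(X_i)$-dimensional space at vertex $i$. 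Summing over all isomorphism classes and invoking the bijection in Theorem~\ref{th-rpp}(3) gives the first equality.

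For the second equality, I would use that every $X \in \cat$ is (up to isomorphism) a finite direct sum of indecomposables, which by the minuscule-poset setup are indexed by $\textsf{u} \in \pos$ via $\textsf{u} \mapsto M_\textsf{u}$. Writing $X \simeq \bigoplus_{\textsf{u}\in\pos} M_\textsf{u}^{n_\textsf{u}}$ with $n_\textsf{u} \in \mathbb{N}$, we have $\dim(X)_i = \sum_{\textsf{u}} n_\textsf{u}\,\dim(M_\textsf{u})_i$, and therefore
\[
\sum_{X\in\cat} \prod_{i=1}^n q_i^{\dim(X)_i}
\;=\; \prod_{\textsf{u}\in\pos} \sum_{n\geq 0} \prod_{i=1}^n q_i^{n\cdot \dim(M_\textsf{u})_i}
\;=\; \prod_{\textsf{u}\in\pos} \frac{1}{1 - \prod_{i=1}^n q_i^{\dim(M_\textsf{u})_i}},
\]
where the last step is the geometric series in the monomial $\prod_i q_i^{\dim(M_\textsf{u})_i}$.

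There is no real obstacle here once Theorem~\ref{th-rpp} is in hand; the only subtlety is ensuring that the bijection of Theorem~\ref{th-rpp}(3) is weight-preserving in the precise sense required, i.e.\ that padding with zeros does not corrupt the sum $|\rho_i|$ (it does not, since zeros contribute nothing). One should also note, for the formal manipulation in the second equality, that the product over $\textsf{u}\in\pos$ is finite, so no convergence issue arises and the exchange of sum and product is the trivial algebraic one.
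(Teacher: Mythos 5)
Your proposal is correct and follows essentially the same route as the paper: the first equality comes from the bijection of Theorem~\ref{th-rpp} (with the observation that $|\rho_i|=|\JFg(X)^i|=\dim X_i$), and the second from the unique decomposition of objects of $\cat$ into copies of the $M_{\textsf{u}}$ together with the geometric series; you have merely written out details the paper leaves implicit.
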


\begin{proof}The first equality is from Theorem \ref{th-rpp}, while the second comes from the fact that any representation in $\cat$ can be decomposed in a unique way as a sum of some number of copies of the representations $M_\textsf{u}$ for $\textsf{u}\in \pos$.
\end{proof}

In fact, our techniques allow us to prove a similar statement where reverse plane partitions on the minuscule poset $\pos$ are replaced by reverse plane partitions on any order filter of $\pos$.  Except in type $A_n$, this result seems to be new. The type $A_n$ result can be deduced from \cite[Corollary 5.2]{gansner1981hillman} or \cite[Corollary 15]{hopkins2014rsk}. See Corollary~\ref{cor_ord_filt_gn_fctn} for our precise statement.

Another combinatorial application is that we can consider the map $\rho_{Q,m}:\mathcal{C}_{Q,m}\to\RPP(\pos)$ from part (2) of Theorem~\ref{th-rpp} to be a generalization of the classical Robinson--Schensted--Knuth (RSK) correspondence. Indeed, we will see in Proposition~\ref{prop:GKinv} that in type $A$, this map has the same Greene--Kleitman invariants \cite{greene1976structure} as RSK. See, for example, \cite{pak2001hook}, \cite{garver2017greene}, and \cite{hopkins2014rsk} for specific instances of the type $A$ map described explicitly in terms of RSK.

\subsection{Connections to previous work}{\ }

\subsubsection{Generalizing Robinson--Schensted--Knuth} Another generalization of the RSK correspondence in the literature is due to Berenstein--Zelevinsky \cite{BZ}. The map in \cite[Theorem 3.7]{BZ} is a bijection converting Lusztig data for a canonical basis element into string cone data for the corresponding element. 
 It would be interesting to understand the relationship between 
this map and 
our bijection from arbitrary fillings of $\pos$ to reverse plane
partitions on $\pos$.
See in particular \cite[Remark 2.13]{BZ}, which
asserts that in  the type $A_n$ minuscule case, their bijection amounts to the Robinson--Schensted--Knuth correspondence. 

\subsubsection{Scrambled RSK}
In February 2020, Duncan Dauvergne posted a preprint \cite{DD} in which he rediscovered the different versions of type $A_n$ RSK which we define corresponding to different orientations of the $A_n$ quiver, under the name of ``scrambled RSK''.

\subsubsection{Invariant subspaces of nilpotent linear operators}
Ringel and Schmidmeier \cite{ringel2008invariant} considered a problem which is similar in spirit to the setting in which we work.  
They focused on the problem of classifying all triples $(U,V,T)$ where $V$ is a finite-dimensional $\Bbbk$-vector space, $T:V\rightarrow V$ is a linear operator with $T^n=0$, and $U$ is a $T$-invariant subspace of $V$.  This is equivalent to representations of the $A_2$ quiver for which the linear map corresponding to the arrow is an injection, together with an endomorphism $T$ of the representation satisfying $T^n=0$.  
For $n < 6$, they show that this category has only finitely many indecomposable representations. For $n = 6$, there are infinitely many, and they present a complete classification of these indecomposables. For $n > 6$, they show that this category is of wild representation type and thus no such classification is feasible.  Because we only focus on the Jordan form of the nilpotent endomorphism rather than remembering the specific choice of endomorphism, the wildness which they observe does not pose a problem for our approach.

\section{Quiver representations}

  In this section, we recall the definition of a quiver and of quiver representations. We recommend \cite{assem2006elements} and \cite{derksenweyman} for further background on this topic.
  We show that the nilpotent endomorphisms of a quiver representation $X$ form an irreducible algebraic
  variety, which allows us to define the notion of a generic property of a nilpotent endomorphism
  (namely, a property that holds on a dense open set of this variety).

  We also prove a strengthening of Kac's canonical decomposition theorem.  Let $Q$ be a quiver without loops,
  and suppose we have a vector space at each vertex.  Kac's theorem says that if we choose the representation
  generically, the dimension vectors of the indecomposable summands are well-defined. We choose, in addition,
  a nilpotent linear transformation acting on the vector space at each vertex, and we demand that the linear
  maps be compatible with the nilpotent linear transformations (in the sense that the nilpotent linear
  transformations define an endomorphism of the resulting representation).
{It turns out that if a representation is chosen generically among those compatible with the given maps, the dimension vectors of its indecomposable summands do not depend on the choice.} 

\subsection{Quivers}
A \textit{quiver} $Q$ is a directed graph. In other words, $Q$ is a 4-tuple $(Q_0,Q_1,s,t)$, where $Q_0$ is a set of \textit{vertices}, $Q_1$ is a set of \textit{arrows}, and $s, t:Q_1 \to Q_0$ are two functions defined so that for every $a \in Q_1$, we have $s(a) \xrightarrow{a} t(a)$. 

A \textit{representation} $V = ((V_i)_{i \in Q_0}, (f_a)_{a \in Q_1})$ of a quiver $Q$  is an assignment of a finite-dimensional $\fk$-vector space $V_i$ to each vertex $i$ and a $\fk$-linear map $f_a: V_{s(a)} \rightarrow V_{t(a)}$ to each arrow $a$ where $\fk$ is a field.  The \textit{dimension vector} of $V$ is the vector $\textbf{dim}(V):=(\dim V_i)_{i\in Q_0}$. {The \textit{dimension} of $V$ is the defined as $\dim V := \sum_{i\in Q_0}\dim V_i$.}

Let $V = ((V_i)_{i \in Q_0}, (f_a)_{a \in Q_1})$ and $W  = ((W_i)_{i \in Q_0}, (g_a)_{a \in Q_1})$ be two representations of a quiver $Q$. A \textit{morphism} $\theta : V \rightarrow W$ consists of a collection of linear maps $\theta_i : V_i \rightarrow W_i$ that are compatible with each of the linear maps in $V$ and $W$.  That is, for each arrow $a \in Q_1$, we have $\theta_{t(a)} \circ f_a = g_a \circ \theta_{s(a)}$. We say that a collection of linear maps $\{\theta_i\}_{i\in Q_0}$ is \textit{compatible} with the representation $V$ when they define a morphism. An \textit{isomorphism} of quiver representations is a morphism $\theta: V \to W$ where $\theta_i$ is a $\fk$-vector space isomorphism for all $i \in Q_0$. 

The representations of a quiver $Q$ along with morphisms between them form an abelian category, denoted by $\rep Q$. The category $\rep Q$ is equivalent to the category of finitely-generated left modules over the path algebra of $Q$. 

Fix $\textbf{d} \in \mathbb N^n$, and consider the representations of $Q$ with dimension vector $\textbf d$.  
Choosing a basis for each of the vector spaces, we can identify the representations of $Q$ with the points of the affine space
$$\rep (Q,\textbf{d}) = \prod_{a \in Q_1} \text{Hom}_\Bbbk(\Bbbk^{\dim V_{s(a)}}, \Bbbk^{\dim V_{t(a)}}) = \prod_{a \in Q_1} \text{Mat}_{{\dim V_{s(a)}\times \dim V_{t(a)}}}(\Bbbk).$$
We refer to $\rep(Q,\textbf d)$ as the \textit{representation space} of representations with dimension vector $\mathbf d$.

The algebraic group $GL(\bd)=\prod_{i\in Q_0} GL(d_i)$
acts on $\rep (Q,\bd)$ by change of basis at each vertex.  
The orbits of this group action are exactly the isomorphism classes of representations of $Q$ with dimension vector $\bd$.

\subsection{Nilpotent endomorphisms of quiver representations}

\newcommand{\rad}{\operatorname{rad}}
Throughout this subsection, we let $A$ denote a finite dimensional basic $\Bbbk$-algebra.
There exists a quiver $Q$ and a set of relations $I$ such that the category of finite-dimensional left $A$-modules is equivalent to the category of representations of $Q$ satisfying the relations in $I$; we freely pass back and forth between these two perspectives. {(See \cite[Chapter II]{assem2006elements} for further details.)}

\begin{lemma}\label{nend-irred}
Let $A$ be a finite-dimensional $\fk$-algebra, for $\fk$ an
algebraically closed field,
and let $X$
be a finite-dimensional left module over $A$.  
Let  
$$ \text{NEnd}(X): = \{N \in \text{End}(X): N^k = 0 \ \text{for some $k \ge 0$}\}.$$ 
Then $\NEnd(X)$ is an irreducible algebraic variety.\end{lemma}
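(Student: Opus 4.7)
The plan is to reduce the irreducibility of $\NEnd(X)$ to the classical fact that the nilpotent cone in a matrix algebra is irreducible, by exploiting the structure theory of finite-dimensional algebras. Write $E := \End_A(X)$, which is itself a finite-dimensional $\fk$-algebra sitting inside $\End_\fk(X)$. First I would observe that $\NEnd(X)$ is a closed subvariety of the affine space $E$: by Cayley--Hamilton applied inside $\End_\fk(X)$, an element $N \in E$ is nilpotent if and only if $N^{\dim_\fk X} = 0$, which is a polynomial condition on the coefficients of $N$.

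Next, let $R = \rad(E)$ be the Jacobson radical. Since $E$ is artinian, $R$ is nilpotent (say $R^\ell = 0$) and $E/R$ is a semisimple finite-dimensional $\fk$-algebra. A short observation is that $N \in E$ is nilpotent if and only if its image $\bar N$ in $E/R$ is nilpotent: the forward direction is immediate, and for the converse, if $\bar N^{m} = 0$, then $N^{m} \in R$, so $N^{m\ell} \in R^\ell = 0$. Consequently, $\NEnd(X) = p^{-1}(\mathcal N)$, where $p : E \to E/R$ is the quotient map and $\mathcal N \subset E/R$ denotes the subset of nilpotent elements.

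By Wedderburn (using that $\fk$ is algebraically closed), we have $E/R \cong \prod_{i=1}^k M_{n_i}(\fk)$ as $\fk$-algebras. An element of such a product is nilpotent iff each component is nilpotent, so $\mathcal N = \prod_{i=1}^k \mathcal N_{n_i}$, where $\mathcal N_n \subset M_n(\fk)$ is the nilpotent cone. The classical fact that $\mathcal N_n$ is irreducible (it is the closure of the $GL_n(\fk)$-conjugacy class of a regular nilpotent, hence the closure of the image of a connected algebraic group) is the one nontrivial ingredient; I expect this to be the main technical input, although it is completely standard. A product of irreducible varieties is irreducible, so $\mathcal N$ is irreducible.

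Finally, since $R$ is a linear subspace of $E$, choosing any $\fk$-linear complement gives a decomposition $E \cong (E/R) \oplus R$ under which $p$ becomes the coordinate projection. Thus $\NEnd(X) = p^{-1}(\mathcal N) \cong \mathcal N \times R$ as algebraic varieties, which is irreducible as the product of an irreducible variety with an affine space. This completes the proof.
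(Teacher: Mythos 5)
Your proof is correct and follows essentially the same route as the paper: reduce modulo the (nilpotent) radical of $\End(X)$, use Wedderburn to identify the quotient with a product of matrix algebras, and invoke irreducibility of the nilpotent cone in each factor. The only cosmetic difference is that the paper proves irreducibility of the nilpotent cone via the conjugation map $GL(r)\times U(r)\to \Mat_{r\times r}(\fk)$ rather than citing the regular nilpotent orbit closure, and you make explicit the (straightforward) final step that the preimage under the linear quotient map is $\mathcal N\times \rad\End(X)$, which the paper leaves implicit.
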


\begin{proof} First, note that $\rad\End(X)$ is a nilpotent ideal.  Thus, $f\in \End(X)$ is nilpotent if and only if its image in  $\End(X)/\rad\End(X)$ is nilpotent.
  On the other hand, $\End(X)/\rad\End(X)$ is semisimple, so it is isomorphic to a product of matrix rings over $\fk$.  The nilpotent elements in a matrix ring form an irreducible variety, as we now explain.  Any nilpotent matrix is conjugate to a strictly upper triangular matrix, {and conversely} {any matrix conjugate to a strictly upper triangular matrix is nilpotent}.  Therefore, let $U(r)$ denote the strictly upper triangular matrices over $\fk$, and consider the map from $GL(r)\times U(r)$ to $GL(r)$ sending $(G,U)$ to $GUG^{-1}$.  Now $GL(r)\times U(r)$ is clearly irreducible, and the image of this map is exactly the nilpotent matrices.  It follows that the variety of nilpotent matrices is also irreducible.
\end{proof}

{Now, let $X$ be a finite-dimensional left module over $A$ and $((X_i)_{i\in Q_0}, (f_a)_{a\in Q_1})$  
the corresponding representation under the equivalence mentioned above.} For $N\in \NEnd(X)$, we write $N_i$ for the induced linear transformation on $X_i$.  We write $\JF(N_i)$ for the \textit{Jordan form} of $N_i$, understood as a partition whose parts are the sizes of the Jordan blocks.  So $\JF(N_i) \vdash d_i$,  where $d_i$ is the dimension of $X_i$. 
We write $\JF(N)$ for the \textit{Jordan form} 
of $N$, i.e., the $n$-tuple $(\JF(N_i))_{i\in Q_0}$.  If $\bmu=(\mu^1,\dots\mu^n)$
is an $n$-tuple of partitions with $\mu^i\vdash d_i$ for each $i \in Q_0$, we 
write $\bmu\vdash \bd$.  So we write $\JF(N)\vdash \bd$.

Let $\gamma$ and $\kappa$ be partitions of $m$. We say that $\gamma\leq\kappa$ in \textit{dominance order} if $\gamma_1+\cdots+\gamma_k\leq \kappa_1+\cdots+\kappa_k$ for each $k\geq 1$, where we add zero parts to $\gamma$ and $\kappa$  as necessary. We extend this definition to $n$-tuples of partitions as follows.
Given $\bmu=(\mu^1,\mu^2,\ldots,\mu^n)$ and $\bl=(\lambda^1,\lambda^2,\ldots,\lambda^n)$ with $\mu^i$ and $\lambda^i$ partitions of $m_i$, we say that $\bl\leq \bmu$ if $\lambda^i\leq\mu^i$ in dominance order for each $i$. 

Given a partition $\gamma=(\gamma_1,\ldots,\gamma_s)$, we define the \textit{length} of $\gamma$ to be the number of parts of $\gamma$. It is denoted $\ell(\gamma)= s$. 
Recall that the \textit{conjugate} partition is $\gamma^t=(\gamma_1^\pr,\ldots,\gamma_{s^\pr}^\pr)$, where $\gamma_k^\pr$ is the number of parts $\gamma_j$ with $\gamma_j\geq k$. It is well known that transposition reverses dominance order: $\gamma\leq\kappa$ if and only if $\kappa^t \leq \gamma^t$. Therefore, by defining the conjugate of an $n$-tuple of partitions by ${\bmu}^t := ((\mu^1)^t,\ldots, (\mu^n)^t)$, we see that ${\bl} \le {\bmu}$ if and only if ${\bmu}^t \le {\bl}^t$.

\begin{lemma}\label{closed} Let $\bmu\vdash \bd$.  
Let $\NEnd_{\leq \bmu}(X)$
be the subset of $\NEnd(X)$ consisting of those nilpotent endomorphisms $N$
such that $\JF(N) \leq \bmu$.
Then $\NEnd_{\leq \bmu}(X)$ is closed in $\NEnd(X)$.
\end{lemma}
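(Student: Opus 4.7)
The plan is to reduce the statement to a collection of rank conditions on powers of the induced endomorphisms at each vertex, and then observe that each such condition cuts out a closed subvariety.

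First I would unpack the definitions: the condition $\JF(N) \leq \bmu$ means $\JF(N_i) \leq \mu^i$ in dominance order for each vertex $i \in Q_0$. The assignment $N \mapsto N_i$ is a morphism of varieties $\NEnd(X) \to \End(X_i)$, so it suffices to show that for each $i$ the set
\[
\{T \in \End(X_i) : T \text{ nilpotent},\ \JF(T) \leq \mu^i\}
\]
is closed, and then to intersect the finitely many preimages.

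Next I would translate the dominance condition on Jordan type into rank conditions. The standard fact is that for a nilpotent operator $T$ on a $d$-dimensional space with Jordan type $\lambda$,
\[
\dim \ker(T^k) \;=\; \lambda^t_1 + \lambda^t_2 + \cdots + \lambda^t_k
\]
for every $k \geq 1$. Using that transposition reverses dominance order (as recalled in the excerpt), the inequality $\JF(T) \leq \mu^i$ is equivalent to $(\mu^i)^t \leq \JF(T)^t$, i.e.\ to
\[
\sum_{j=1}^{k} (\mu^i)^t_j \;\leq\; \dim \ker(T^k) \qquad \text{for every } k \geq 1,
\]
or, dually, to $\operatorname{rank}(T^k) \leq d_i - \sum_{j=1}^{k} (\mu^i)^t_j$ for every $k \geq 1$.

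Finally, I would invoke the classical fact that $\{T : \operatorname{rank}(T^k) \leq r\}$ is Zariski closed in $\End(X_i)$, being the vanishing locus of the $(r+1) \times (r+1)$ minors of the matrix of $T^k$, whose entries are polynomials in the entries of $T$. Only finitely many values of $k$ impose a nontrivial condition (namely $1 \leq k \leq d_i$), so the set of $T$ with $\JF(T) \leq \mu^i$ is a finite intersection of closed subsets, hence closed. Pulling back along $N \mapsto N_i$ and intersecting over $i \in Q_0$ gives the result.

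There is no serious obstacle here: the argument is a routine translation between dominance order and rank data, plus the standard closedness of rank conditions. The only point where one needs to be slightly careful is checking the transposition/dominance dictionary so that the inequality winds up in the right direction as a \emph{closed} (upper semicontinuous rank) condition rather than an open one.
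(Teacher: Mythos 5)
Your proposal is correct and follows essentially the same route as the paper: both pass from $\JF(N_i)\leq \mu^i$ to the transposed inequality $\JF(N_i)^t\geq(\mu^i)^t$, translate the partial-sum conditions into bounds $\operatorname{rank}(N_i^k)\leq \dim X_i-\sum_{j\leq k}(\mu^i)^t_j$, and conclude by closedness of rank conditions. Your write-up just makes explicit the vanishing-of-minors justification and the finite intersection over vertices, which the paper leaves implicit.
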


\begin{proof}
Instead of considering the conditions that $\JF(N_i) \leq \mu^i$ in dominance order for all $i$,
we consider the equivalent condition of having $\JF(N_i)^t \geq (\mu^i)^t$
in dominance order for all $i$.  

The condition that $\JF(N_i)^t_1 \geq (\mu^i)^t_1$ is precisely the
condition that the rank of $N_i$ be less than or equal to $\dim V_i - (\mu^i)^t_1$.  Similarly, the condition that $(\JF(N_i)^t_2 \geq (\mu^i)^t_2$ is
precisely the condition that the rank of $N_i^2$ be less than or equal to
$\dim V_i - (\mu^i)^t_1 -(\mu^i)^t_2$, and similarly for the other conditions
which need to be checked.  These rank conditions are closed
conditions, which proves the result.\end{proof}

\begin{theorem} Let $A$ be a finite-dimensional $\fk$-algebra, for $\fk$ an
algebraically closed field,
and let $X$
be a finite-dimensional left module over $A$. There is a maximum value of $\JF$ on $\NEnd(X)$, and it is attained on a dense open set of $\NEnd(X)$.
\end{theorem}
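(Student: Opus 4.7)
The plan is to combine the irreducibility of $\NEnd(X)$ established in Lemma~\ref{nend-irred} with the closedness of the loci $\NEnd_{\leq \bmu}(X)$ established in Lemma~\ref{closed}, exploiting the finiteness of the dominance poset on $n$-tuples of partitions of $\bd$.

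First, I would let $S := \JF(\NEnd(X))$ denote the set of Jordan types actually attained. Since each $d_i$ admits only finitely many partitions, $S$ is finite. Because every $N$ lies in $\NEnd_{\leq \JF(N)}(X)$, there is a finite closed covering
$$\NEnd(X) = \bigcup_{\bmu \in S} \NEnd_{\leq \bmu}(X).$$
An irreducible space cannot be written as a finite union of proper closed subsets, so there must exist $\bmu_* \in S$ with $\NEnd_{\leq \bmu_*}(X) = \NEnd(X)$. This forces $\JF(N) \leq \bmu_*$ for every $N$, and combined with $\bmu_* \in S$ this identifies $\bmu_*$ as the (automatically unique) maximum value of $\JF$ on $\NEnd(X)$.

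Second, I would extract density of the locus where the maximum is attained. The key identity is
$$\{N \in \NEnd(X) : \JF(N) = \bmu_*\} \;=\; \NEnd(X) \setminus \bigcup_{\bmu \in S,\, \bmu < \bmu_*} \NEnd_{\leq \bmu}(X),$$
which holds because any $N$ with $\JF(N) \neq \bmu_*$ must satisfy $\JF(N) < \bmu_*$, and then belongs to $\NEnd_{\leq \JF(N)}(X)$. The right-hand side is open as the complement of a finite union of closed sets, and it is nonempty because $\bmu_* \in S$. Any nonempty open subset of the irreducible variety $\NEnd(X)$ is dense, which finishes the argument.

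There is no serious obstacle; the substance is already in Lemmas~\ref{nend-irred} and~\ref{closed}. The one point requiring a little care is logical: one should restrict attention from the outset to the attained set $S$ rather than picking $\bmu_*$ as an arbitrary minimal element of $\{\bmu \vdash \bd : \NEnd_{\leq \bmu}(X) = \NEnd(X)\}$, since an abstractly minimal such $\bmu$ need not a priori be realized as $\JF(N)$ for some $N$. Working with $S$ from the start sidesteps this issue and keeps both the existence of the maximum and its density essentially immediate.
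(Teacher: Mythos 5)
Your proposal is correct and follows essentially the same route as the paper: both arguments combine the irreducibility of $\NEnd(X)$ (Lemma~\ref{nend-irred}) with the closedness of the sub-level sets $\NEnd_{\leq \bmu}(X)$ (Lemma~\ref{closed}) and the finiteness of the set of possible Jordan forms to produce a maximal attained value, and then exhibit the exact level set as the complement of a finite union of closed proper subsets, hence dense open. The only differences are organizational (the paper first picks a Jordan form attained on a dense set and deduces it is the maximum, and describes the strict-inequality locus via the tuples covered by $\bmu$ rather than via the attained set $S$), not mathematical.
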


\begin{proof} 
Since there are only a finite number of possible Jordan forms $\JF(N)$ for $N\in \NEnd(X)$, 
there must be (at least) one that is attained at a dense set of points,
i.e., at a set of points whose closure is all of $\NEnd(X)$.  Let one such be $\bmu$.
By  Lemma
\ref{closed}, $\NEnd_{\leq \bmu}(X)$,
is a closed set.  Since it includes the dense set where $\JF(N)=\bmu$, it
must be all of $\NEnd(X)$.  Thus the value of $\JF(N)$ is at most $\bmu$
for any $N\in \NEnd(X)$.
Further, by applying Lemma \ref{closed} to each tuple of partitions that is
covered
by $\bmu$, we find that the set of all $N'\in \NEnd(X)$ such that
$\JF(N')$ is strictly less than $\bmu$ is a closed
set.  Thus the set of nilpotent endomorphisms with Jordan form exactly
$\bmu$ is a dense open set.  \end{proof}

The maximum value of $\JF(N)$ for $N\in \NEnd(X)$ will be referred to as the \textit{generic value} of $\JF$ on $\NEnd(X)$, and written $\JFg(X)$.

\subsection{Canonical decompositions}\label{Sec_canon_decomp}

Let $Q$ be a quiver without loops and $\textbf{d}$ be a dimension vector. Kac shows \cite[p. 85]{kac-infinite-i} that there is a decomposition
$$\textbf{d}=\textbf{d}_1+\ldots+\textbf{d}_r$$
and a dense open set in
$\rep (Q,\textbf{d})$ such that all the representations in this dense open set can
be written as
$$M=M_1\oplus \dots \oplus M_r$$
where each $M_i$ is indecomposable and $\textbf{dim}(M_i)=\textbf{d}_i$ for all $i \in \{1,\ldots, r\}$.  Note that different choices of $M$ may lead to non-isomorphic representations $M_i$; all that is determined is their dimension vectors. (If $Q$ is of finite representation type, an indecomposable representation is determined up to isomorphism by its dimension vector, so in fact, all the representations in the dense open set are isomorphic, but this is not the general behaviour.) We will prove that there is a similar decomposition once one demands compatibility
with a nilpotent endomorphism of specified Jordan form.

\begin{theorem}\label{thm:GenericMpi}
Let $Q$ be a quiver and $\textbf{d}=(d_1,\dots,d_n)$ a dimension vector. Let $\blambda \vdash \bd$, and let $N$ be an $n$-tuple of 
linear transformations whose Jordan form is $\blambda$.  
Consider the representations of $Q$ that are compatible with the action of $N$. Then there is a dense open subset of the variety of such representations and a decomposition $\textbf{d}=\textbf{d}_1+\dots+\textbf{d}_r$, such that all the representations in this dense open set can be written as $$M=M_1\oplus\dots\oplus M_r,$$ where each $M_i$ is indecomposable and $\textbf{dim}(M_i)=\textbf{d}_i$ for all $i \in \{1,\ldots, r\}$.
\end{theorem}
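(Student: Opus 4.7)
The plan is to adapt Kac's canonical decomposition argument to the subvariety of representations compatible with the fixed nilpotent endomorphism $N$. Three ingredients will be needed: irreducibility of the compatible variety, finiteness of the possible decomposition types, and constructibility of the corresponding stratification.

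First, I would establish irreducibility. Let $V$ denote the variety of representations on the fixed vector spaces of dimensions $d_i$ compatible with $N$. The compatibility conditions $N_{t(a)} f_a = f_a N_{s(a)}$ for each arrow $a$ are linear in the entries of $f_a$, so $V$ is a linear subspace of $\rep(Q, \bd)$ and hence an irreducible affine variety. This irreducibility plays the role that irreducibility of the full representation variety plays in Kac's original setting.

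Next, each $M \in V$ admits a Krull-Schmidt decomposition as a $\fk Q$-module, giving a multiset $\delta(M)$ of dimension vectors summing to $\bd$; only finitely many such multisets occur as $M$ varies over $V$. The main step will be to show that each stratum $V_\mathcal{D} := \{M \in V : \delta(M) = \mathcal{D}\}$ is a constructible subset of $V$. To see this I would use semi-continuity of classical invariants: $\dim \End(M)$ is upper semi-continuous in $M$, and more refined invariants (the structure of the semisimple quotient $\End(M)/\rad\End(M)$, together with multiplicities of indecomposables of given dimension vector) are detectable by rank conditions on $\Hom$-spaces between $M$ and fixed modules of small dimension. These are the same kinds of arguments that appear in the standard proof of Kac's theorem, and they restrict cleanly to $V$ because all the relevant conditions are algebraic rank conditions on matrices of fixed format.

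Finally, irreducibility of $V$ combined with the partition into finitely many constructible pieces forces exactly one stratum $V_{\mathcal{D}^*}$, with $\mathcal{D}^* = \{\bd_1, \ldots, \bd_r\}$, to contain a dense open subset of $V$; this yields the desired canonical decomposition. I expect the main obstacle to be pinning down the precise collection of semi-continuous invariants that together separate the multisets $\mathcal{D}$, and verifying their behavior under the linear conditions cutting out $V$ from $\rep(Q, \bd)$. Everything else follows a well-worn pattern, but this constructibility check is the place where some genuine work is required.
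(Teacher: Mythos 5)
Your overall strategy is the same as the paper's: show that the locus $V$ of representations compatible with the fixed $N$ is irreducible, partition the representation space into finitely many constructible strata according to the multiset of dimension vectors of indecomposable summands, and conclude that exactly one stratum meets $V$ in a dense open set. Your irreducibility argument (the compatibility equations $N_{t(a)}f_a=f_aN_{s(a)}$ are linear in $f_a$, so $V$ is a linear subspace of $\rep(Q,\bd)$) is correct and even a little more economical than the paper's, which instead exhibits an explicit free parametrization of $V$ by the images of Jordan-block generators (a parametrization it reuses later, in the proof of Lemma~\ref{big-lemma}). The one place your sketch wobbles is the constructibility step. The mechanism you propose --- semi-continuity of $\dim\End(M)$ together with rank conditions on $\Hom$-spaces against finitely many fixed test modules --- cannot in general detect the decomposition type: for a wild quiver there are infinitely many indecomposables of a given dimension vector, and no finite list of test modules separates the strata. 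The argument the paper uses (following Kraft--Riedtmann) needs no invariants at all: for each decomposition $\bd=\bd_1+\dots+\bd_r$, the set of representations isomorphic to a direct sum with those dimension vectors is the image of the morphism $GL(\bd)\times\prod_i\rep(Q,\bd_i)\to\rep(Q,\bd)$, hence constructible by Chevalley's theorem, and the strata with indecomposable summands are then Boolean combinations of such images. Once constructibility in the ambient space is known, intersecting with $V$ is automatic (constructibility is preserved under intersection with a subvariety), so the step you flagged as requiring ``genuine work'' disappears; your final step, that an irreducible variety covered by finitely many disjoint constructible sets has exactly one of them containing a dense open subset, is exactly the paper's conclusion.
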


\begin{proof}
Let $V_i$ be a vector space  of dimension $|\lambda^i|$ for each $i\in Q_0$.  Choose $N_i$ a linear transformation of $V_i$ with Jordan form $\lambda^i$.  For each part $\lambda^i_{j}$, choose $v_{ij}\in V_i$ in an $N_i$-invariant subspace corresponding to that Jordan block such that the nonzero elements of the form $N_i^kv_{ij}$ span the {$N_i$-invariant subspace}.  A representation of $Q$ compatible with $(N_i)$ is determined by specifying the image of $v_{ij}$ under the map corresponding to each arrow $a:i \rightarrow k$, and the image of $v_{ij}$ can be freely chosen in the subspace of $V_k$ annihilated by $N_k^{\lambda^i_j}$. This shows that the representations compatible with $N$ form an affine space $X$ inside $\rep (Q,\textbf{d})$.  

Now, we can use exactly the same argument as given by Kraft
and Riedtmann \cite{KR} in their proof of Kac's canonical decomposition
theorem.  Namely, for any decomposition $$\textbf{d}=
\textbf{d}_1+\dots+\textbf{d}_r$$
the locus within $\rep(Q,\textbf{d})$ such that the corresponding representation admits a direct sum decomposition with those dimension vectors forms a constructible set; it therefore follows that the locus admitting such a direct sum decomposition where the summands are indecomposable, is also constructible.  These constructible sets are obviously disjoint and cover $\rep(Q,\textbf{d})$.  The intersection of  
each of these sets with the affine space constructed above are also constructible; it follows that exactly one of them contains a dense open subset of the affine space.

This establishes that the representations compatible with
$N$ form an irreducible variety.  The representations compatible with some collection of nilpotent endomorphisms of the specified Jordan form are then found by closing under the base change action of $GL(\bd)$, which, as in the proof of Lemma \ref{nend-irred}, preserves irreducibility.
\end{proof}

We have the following immediate corollary, since if $Q$ is Dynkin, then knowing the dimension vectors of the indecomposable summands of a representation determines the representations up to isomorphism.

\begin{corollary}\label{cor:genrep} Let $Q$ be a Dynkin quiver, let 
$\bd\in \mathbb{N}^n$, and let $\bnu\vdash \bd$.  Let $N$ be a 
collection of nilpotent linear transformations with $\JF(N)=\bnu$.
Then there is a dense open set in the variety of representations 
compatible with $N$ within which the representations are all 
isomorphic.  
\end{corollary}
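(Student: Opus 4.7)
The plan is to derive this as a direct application of Theorem~\ref{thm:GenericMpi}, invoking Gabriel's theorem for Dynkin quivers. First I would apply Theorem~\ref{thm:GenericMpi} to the data $(Q, \bd, \bnu, N)$ to obtain a dense open subset $U$ of the variety of representations compatible with $N$, together with a decomposition $\bd = \bd_1 + \cdots + \bd_r$, such that every representation $M \in U$ admits a direct sum decomposition $M = M_1 \oplus \cdots \oplus M_r$ with $M_i$ indecomposable and $\textbf{dim}(M_i) = \bd_i$.

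The key input from the Dynkin hypothesis is Gabriel's theorem: when $Q$ is Dynkin, the indecomposable representations of $Q$ are in bijection with the positive roots of the associated root system, and in particular each indecomposable is determined up to isomorphism by its dimension vector. Therefore the dimension vectors $\bd_i$ uniquely determine the isomorphism types of the indecomposable summands $M_i$.

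Now if $M$ and $M'$ are any two representations lying in $U$, both decompose as $M_1 \oplus \cdots \oplus M_r$ and $M_1' \oplus \cdots \oplus M_r'$ with $\textbf{dim}(M_i) = \textbf{dim}(M_i') = \bd_i$. By the previous paragraph, $M_i \cong M_i'$ for each $i$, and hence $M \cong M'$. This gives the claimed dense open set of pairwise-isomorphic representations. There is no real obstacle here; the content of the corollary is entirely concentrated in Theorem~\ref{thm:GenericMpi} together with the well-known Dynkin classification, and the only thing to verify is that Gabriel's theorem applies uniformly across all summands, which follows immediately.
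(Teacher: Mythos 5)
Your proposal is correct and matches the paper's argument: the paper states this corollary as an immediate consequence of Theorem~\ref{thm:GenericMpi} together with the fact (Gabriel's theorem) that for Dynkin quivers an indecomposable is determined up to isomorphism by its dimension vector, which is exactly your reasoning.
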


In the setting of Corollary~\ref{cor:genrep}, we define $\GR(\bnu)$ to be a representation that is isomorphic to the 
representations corresponding to points in the dense open set.

\section{Reflection functors}\label{Sec_refln_functors}
For the remainder of the paper, we assume that $Q$ is an acyclic quiver. This section proceeds as follows. In Section~\ref{sec:refl}, we recall the definition of reflection functors, which are functors from $\rep Q$ to $\rep Q'$, where $Q'$ is obtained by reversing all the arrows at a source or sink of $Q$. In Section~\ref{ref-and-geom}, we present a geometric interpretation of the reflection functors: we show that they give a canonical identification between certain quotients of open subsets of the representation spaces $\rep(Q,\textbf{d})$ and $\rep(Q',\textbf{d}')$. Here $\textbf{d}'$ is determined by $\textbf{d}$. In Section~\ref{sec:reflandcanonical} and Section~\ref{sec:refl_nilpotent}, we prove the main results of this section: Theorems \ref{th-ref-canon} and \ref{th-ref-nil}. Informally, these theorems say that (under some conditions) reflection functors map generic representations to generic representations.

\subsection{Definition of reflection functors}\label{sec:refl}
Following \cite{assem2006elements}, we now review the definitions of reflection functors in the sense of Bernstein--Gelfand--Ponomarev, which were introduced in \cite{bernstein1973coxeter}. 
We only recall how reflection functors act on objects.

Given a vertex $k \in Q_0$, let $\sigma_k(Q)$ be the quiver obtained from $Q$ by reversing the direction of all arrows of $Q$ that are incident to $k$. Now fix some $k \in Q_0$ that is a sink, and let $Q^\prime = \sigma_k(Q)$. 
We define the \textit{reflection functor} $${R}^+_k: \rep Q \longrightarrow \rep Q^\prime$$ as follows. Given $V = ((V_i)_{i \in Q_0}, (f_a)_{a \in Q_1}) \in \rep Q$, we set ${R}_k^+(V) := ((V^\prime_i)_{i \in Q^\prime_0}, (f^\prime_a)_{a \in Q^\prime_1}) \in \rep Q^\prime$ where
\begin{itemize}
\item $V_i^\prime = V_i$ for $i \neq k$ and $V^\prime_k$ is the kernel of the map $(f_a)_{a: s(a)\to k}: \left(\bigoplus_{a: s(a) \to k} V_{s(a)}  \right)\longrightarrow V_k ,$
\item $f^\prime_a = f_a$ for all arrows $a: i \to j \in Q_1$ with $j \neq k$, and for any arrows $a: i \to k \in Q_1$ the map $f^\prime_a: V_k^\prime \to V^\prime_i = V_i$ is the composition of the inclusion of $V_k^\prime$ into $\bigoplus_{a: s(a) \to k} V_{s(a)}$ with the projection onto the direct summand $V_i$.
\end{itemize}
It is convenient to introduce the notation $V_\adj$ for $\bigoplus_{a: s(a) \to k} V_{s(a)}$.

Now suppose we fix a source $k \in Q_0$, and let $Q^\prime = \sigma_k(Q).$ We define the \textit{reflection functor} $${R}^-_k: \rep Q \longrightarrow \rep Q^\prime$$ as follows. Given $V = ((V_i)_{i \in Q_0}, (f_a)_{a \in Q_1}) \in \rep Q$, we set ${R}_k^-(V) := ((V^\prime_i)_{i \in Q^\prime_0}, (f^\prime_a)_{a \in Q^\prime_1}) \in \rep Q^\prime$ where
\begin{itemize}
\item $V_i^\prime = V_i$ for $i \neq k$ and $V^\prime_k$ is the cokernel of the map $(f_a)_{a:k\to t(a)}: V_k \longrightarrow \left(\bigoplus_{a: k \to t(a)} V_{t(a)}  \right),$
\item $f^\prime_a = f_a$ for all arrows $a: i \to j \in Q_1$ with $i \neq k$, and for any arrows $a: k\to j \in Q_1$ the map $f^\prime_a: V^\prime_j = V_j \to V_k^\prime $ is the composition of the inclusion of $V_j$ into $\bigoplus_{a: k \to t(a)} V_{t(a)}$ with the cokernel projection onto $V^\prime_k$.
\end{itemize}
Similarly, in this setting, we write $V_\adj$ for $\bigoplus_{a: k \to t(a)} V_{t(a)}$.

For $\bd\in \mathbb Z^n$ and $k$ a source, define $\refl_k(\bd)$ to be the vector that coincides with $\bd$ except that 
$$\refl_k(\bd)_k = -d_k+ \sum_{a: k\to t(a)} d_{t(a)}.$$
If $V$ is a representation of $Q$ that has no $S_k$ summand, then $\textbf{dim}(R^-_k(V)) = \refl_k(\textbf{dim}(V))$.
We {similarly} define $\refl_k(\textbf{d})$ when $k$ is a sink.  

\subsection{Geometry of reflection functors}\label{ref-and-geom}
It will be useful to interpret reflection functors in
terms of the geometry of representation spaces. 
Suppose $Q$
is a quiver, $k$ is a source of $Q$, and $\bd$ is a 
dimension vector.

For $V$ a representation of $Q$, the collection of all the maps leaving $V_k$ in $V$ can be viewed as a single map from $V_k$ into $V_\adj$. 
The kernel of the map from $V_k$ to $V_\adj$ is exactly
the sum of all the copies of the simple $S_k$ in $V$.  In particular, $V$ has no $S_k$ summand if and only if the map from $V_k$ to $V_\adj$ is injective.  

Inside $\rep(Q,\bd)$, we can consider the subset $\repi(Q,\bd)$ where the map from $V_k$ to $V_\adj$ is injective.  
(Note that the definition of $\repi(Q,\bd)$ also depends
on the choice of $k$, but $k$ will be fixed throughout, so we suppress it.)
This is an open subset, since 
its complement is defined by the fact that the matrix defining the map from $V_k$ to $V_\adj$ is not of full rank, and that is a closed condition.  

Similarly, let $Q'= \sigma_k(Q)$, and $\bd'\in \mathbb N^n$. 
Consider $\rep(Q',\bd')$, and
let $W$ be the representation corresponding to a point 
in it. Define $W_\adj$ to be the direct sum over all arrows into $k$ of the vector space at the source of the arrow. Inside $\rep(Q',\bd')$, there is an open subset 
$\reps(Q',\bd')$ where the map from $W_\adj$ to
$W_k$ is surjective.  This is exactly the region where $W$
has no summand isomorphic to the simple representation of $Q'$ at $k$, which we denote $S_k'$.  

The most naive thing one might hope for would be to guess that if $\bd'=\refl_k(\bd)$, then reflection functors would establish an isomorphism of varieties between $\repi(Q,\bd)$ and $\reps(Q',\bd')$.  But this is obviously the wrong thing to hope for because these spaces admit natural actions by different groups:
$\repi(Q,\bd)$ has an action of $GL(\bd)$, while $\reps(Q,\bd')$ has an action of $GL(\bd')$.

Define $\trepi(Q,\bd)$ to be the quotient of $\repi(Q,\bd)$
by the action of $GL(d_k)$ at $V_k$. 
Concretely, $\trepi(Q,\bd)$ consists of the matrices assigned to  the arrows of $V$ which do not involve $k$, times the Grassmannian of $d_k$-dimensional subspaces of 
$V_\adj$, which defines the image of $V_k$.

Similarly, we can define $\treps(Q',\bd')$. It consists of
the matrices {assigned to} the arrows of $V$ which do not involve $k$, {times} the Grassmannian of $d'_k$-dimensional quotient spaces of $V_\adj$.

Now observe that $d_k+d'_k=\dim V_\adj$, and we see that there is a canonical identification between $\trepi(Q,\bd)$ and $\treps(Q',\bd')$, identifying the $d_k$-dimensional subspace $L$ with the $(\dim V_\adj - d_k)$-dimensional quotient $V_\adj/L$.  This identification is, of course, exactly what is effected by the reflection functors.

\subsection{Interlacing partitions}
In this section, we introduce the notion of what it means
for two partitions to be interlaced, or, more 
generally, $t$-interlaced. We will apply this in the
next section to describe the effect of reflection functors.

Two partitions $\lambda$ and $\mu$ are interlaced if 
$$\mu_1\geq \lambda_1\geq \mu_2 \geq \mu_3 \geq \lambda_2 
\geq \mu_4\geq \mu_5 \geq \lambda_3\dots.$$
We think of $\lambda$ and $\mu$ as being padded with an infinite string of zeros, and we insist that the above inequalities hold there too. (In other words, after the last non-zero part of $\lambda$, there can be at most two non-zero values of $\mu$, and before the last non-zero part of $\lambda$, all the parts of $\mu$ must be non-zero.) 

The example to bear in mind that explains the relevance of
this condition is the following. Let $\nu$ be a partition, which we think of as defining a Ferrers diagram consisting of $\nu_1$ boxes in the top row, $\nu_2$ boxes in the next, and so on, all left justified. We will be interested in reverse plane partitions of shape $\nu$, that is to say, fillings of this shape $\nu$ with entries from $\mathbb{N}$ which weakly
increase to the right along rows and down columns. 

We group the boxes of $\nu$ into northwest-southeast diagonals. We say that a box is removable from $\nu$ if the diagram obtained by removing the box would still be a Ferrers shape. Similarly, we say that a box is addable if the result of adding it would still be a Ferrers shape. 

\begin{example} \label{exa}
Suppose that 
$R$ is a reverse plane partition of shape $\nu$ which has neither a 
removable box nor an addable box in
diagonal $i$. Let $\lambda$ be the partition composed of the entries of $R$ in diagonal $i$, and let $\mu$ be the partition composed of the entries in the adjacent diagonals. Then $\lambda$ and $\mu$ are interlaced.

In the example below, let $\lambda$ be composed of the entries in the main diagonal so that $\lambda=(9,5,1)$. There is neither an addable box nor a removable box in this diagonal.
The partition formed from the adjacent diagonals is $\mu=(9,7,5,3,2)$. We see that $\lambda$ and $\mu$ are interlaced.
$$\begin{ytableau}
1&3&4&4\\ 
2&5&7\\
3&5&9\\
4&6&9
\end{ytableau}$$
\end{example}

We now define what it means for $\lambda$ and $\mu$ to be 
$t$-interlaced for $t\geq 0$. Let 
$\mu_{>t}$ be the partition obtained by removing the first 
$t$ parts from $\mu$. Then $\lambda$ and $\mu$ are 
$t$-interlaced if and only if $\lambda$ and $\mu_{>t}$ are 
interlaced. 

\begin{example} \label{exb} To return to the example of reverse plane partitions, suppose that the partition $\nu$ has an addable box in diagonal $i$.
Defining $\lambda$ and $\mu$ as before, we see that 
they are 1-interlaced.

In the example below, let $\lambda$ be composed of the entries on the main diagonal, so $\lambda=(4,1)$, while 
$\mu$ is composed of the entries in the adjacent diagonals, so $\mu=(6,5,2,1)$.
We see that $\mu_{>1}=(5,2,1)$, and
observe that $\lambda$ and $\mu_{>1}$ 
are interlaced, so $\lambda$ and 
$\mu$ are 1-interlaced.
$$
\begin{ytableau}
1&2&5\\
1&4&6\\
2&5\\
\end{ytableau}$$
\end{example}

We also need to define being $t$-interlaced for $t<0$.  
The partitions $\lambda$ and 
$\mu$ are $t$-interlaced if $\lambda_1=\mu_1$, $\lambda_2=\mu_2$, \dots, $\lambda_t=\mu_t$, and 
$\lambda_{>t}$ and $\mu_{>t}$ are interlaced.

\begin{example} \label{exc}
Again considering reverse plane partitions, suppose that
$\nu$ has a removable box in the $i$-th diagonal. Let $R$ be a reverse plane partition of shape $\nu$, and suppose that the entry in the removable box of in diagonal $i$ is equal to the larger of its neighbors. Let $\lambda$ be composed of the entries in the $i$-th diagonal, while $\mu$ is composed of the entries in the adjacent diagonals. Then $\lambda$ and $\mu$ are $(-1)$-interlaced

Consider the following example, where we choose $\lambda$ to be composed of the entries from the main diagonal:
$$
\begin{ytableau}
0&2&5\\
1&3&6\\
2&5&6\\
\end{ytableau}$$
We see that $\lambda=(6,3)$ and 
$\mu=(6,5,2,1)$, so they are indeed $(-1)$-interlaced.
\end{example}

If $\lambda$ and $\mu$ are $t$-interlaced for $t\geq 0$, define $$\diff(\mu,\lambda)=(\mu_1,\mu_2,\dots,\mu_{t},
\mu_{t+1}+\mu_{t+2}-\lambda_1, \mu_{t+3}+\mu_{t+4}-\lambda_2,\dots).$$

\begin{example} In the setting of Example \ref{exa}, 
if we toggle the entries in the $i$-th diagonal, the resulting entries are given by $\diff(\mu,\lambda)$.

For the specific reverse plane partition given in that example, we obtain:
$$\begin{ytableau}
1&3&4&4\\ 
2&3&7\\
3&5&7\\
4&6&9
\end{ytableau}$$
The result of toggling is $(7,3,1)$, which is indeed
$\diff(\mu,\lambda)=
(9+7-9,5+3-5,2+0-1)=(7,3,1)$

Let $R$ is a tableau with an addable box in the $i$-th diagonal, 
as in Example \ref{exb}, and let $R'$ be obtained by toggling the entries of $R$ on the $i$-th diagonal and then adding into the addable box the entry given by the larger of its neighbours. The result is a reverse plane partition as in \ref{exc}. Let $\lambda$ be composed of the entries on the $i$-th diagonal of $R$, let $\mu$ be composed of the entries on the adjacent diagonals, and let $\lambda'$ be composed of the entries on the $i$-th diagonal of $R'$. Then 
$\lambda'=\diff(\mu,\lambda)=$. \end{example}

If $\lambda$ and $\mu$ are $t$-interlaced for $t\leq 0$, define
$$\diff(\mu,\lambda)=(\mu_{-t+1}+\mu_{-t+2}-\lambda_{-t+1},
\mu_{-t+3}+\mu_{-t+4}-\lambda_{-t+2},\dots).$$

We state some elementary properties of interlacing.

\begin{lemma} \begin{enumerate}
\item Partitions $\lambda$ and $\mu$ can be $t$-interlaced for at most three different values of $t$.
\item If $\lambda$ and $\mu$ are $t$-interlaced for more than one value of $t$, the partition $\diff(\mu,\lambda)$ does not 
depend on the value of $t$ used in the definition.  
\item If $\lambda$ and $\mu$ are $t$-interlaced, then 
$\diff(\mu,\lambda)$ and $\mu$ are $(-t)$-interlaced.
\item $\diff(\mu, \diff(\mu,\lambda))=\lambda$.
\end{enumerate}
\end{lemma}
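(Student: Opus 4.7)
The plan is to prove the three statements in turn, handling the cases $t \geq 0$ and $t \leq 0$ in parallel when convenient. For part (1), suppose $\lambda$ and $\mu$ are $t_1$- and $t_2$-interlaced with $t_1 < t_2$, and I aim to show $t_2 - t_1 \leq 2$. In the case $t_1 \geq 0$, combine $\mu_{t_2+1} \geq \lambda_1$ (from $t_2$-interlacing) with $\lambda_1 \geq \mu_{t_1+2} \geq \mu_{t_1+3} \geq \cdots \geq \mu_{t_2+1}$ (from $t_1$-interlacing together with $\mu$ being a partition) to force $\mu_{t_1+2} = \mu_{t_1+3} = \cdots = \mu_{t_2+1} = \lambda_1$. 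Iterating at position $\lambda_i$ gives $\mu_{t_1+2i} = \cdots = \mu_{t_2+2i-1} = \lambda_i$ for every $i \geq 1$. When $t_2 - t_1 \geq 3$, the ranges for consecutive $i$ overlap, which would force $\lambda_i = \lambda_{i+1}$ for every $i$, contradicting $\lambda$ having only finitely many nonzero parts. The mixed-sign case $t_1 < 0 < t_2$ and the purely negative case $t_2 \leq 0$ are handled in the same spirit, using the initial equalities $\lambda_i = \mu_i$ for $i \leq -t_1$ to seed the chain of forced equalities. I then verify that the valid $t$ form an interval: if $t_1$ and $t_1+2$ are both valid, the derived equalities $\mu_{t_1+2i} = \mu_{t_1+2i+1} = \lambda_i$ directly imply the $(t_1+1)$-interlacing inequalities.

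For part (2), by part (1) it suffices to check agreement of $\diff(\mu,\lambda)$ for two consecutive valid parameters $t$ and $t+1$. Using the equalities $\mu_{t+2i} = \lambda_i$ established in the proof of (1), the entry $\mu_{t+2i-1} + \mu_{t+2i} - \lambda_i$ of $\diff(\mu,\lambda)$ computed at parameter $t$ collapses to $\mu_{t+2i-1}$, so $\diff$ computed with parameter $t$ equals $(\mu_1, \ldots, \mu_{t+1}, \mu_{t+3}, \mu_{t+5}, \ldots)$. An analogous calculation with parameter $t+1$ (whose $i$th entry becomes $\mu_{t+2i+1}$) yields the same sequence.

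For part (3), the verification is direct. Set $\rho = \diff(\mu,\lambda)$. In the case $t \geq 0$: the first $t$ entries satisfy $\rho_i = \mu_i$ by construction, matching the initial equality condition for $(-t)$-interlacing (with $-t \leq 0$). Substituting $\rho_{t+j} = \mu_{t+2j-1} + \mu_{t+2j} - \lambda_j$ into the inequalities $\mu_{t+2j-1} \geq \rho_{t+j} \geq \mu_{t+2j}$ reduces them exactly to the $t$-interlacing inequalities $\mu_{t+2j-1} \geq \lambda_j \geq \mu_{t+2j}$, which hold by hypothesis. Verifying that $\rho$ is itself weakly decreasing (i.e., a partition) uses $\mu_{t+2j-1} - \mu_{t+2j+2} \geq \lambda_j - \lambda_{j+1}$, a direct consequence of interlacing. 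The case $t \leq 0$ is entirely analogous, with the initial equalities $\lambda_i = \mu_i$ for $i \leq -t$ playing the role previously served by the leading block $\mu_1, \ldots, \mu_t$. The main obstacle is the index bookkeeping, and in particular unifying the $t \leq 0$ regime (with its initial equalities) with the $t \geq 0$ regime in part (1).
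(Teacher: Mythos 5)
The paper states this lemma without proof (it is introduced as a list of ``elementary properties''), so there is no argument of record to compare against; your strategy --- forcing equalities of parts of $\mu$ from two overlapping interlacing windows for part (1), and direct substitution of the defining formula for parts (2) and (3) --- is the natural one, and your verifications of (2) and (3) are correct (the inequalities $\mu_{t+2j-1}\ge \operatorname{diff}(\mu,\lambda)_{t+j}\ge\mu_{t+2j}$ do reduce exactly to the $t$-interlacing inequalities, and your monotonicity check for $\operatorname{diff}(\mu,\lambda)$ is right).

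The genuine problem is the concluding step of (1). From $t_1$- and $t_2$-interlacing with $t_2-t_1\ge 3$ you correctly force $\lambda_i=\lambda_{i+1}$ for all $i$ (in the case $t_1\ge 0$), but this is \emph{not} in conflict with $\lambda$ having finitely many nonzero parts: it only forces every $\lambda_i$ to be $0$, i.e.\ $\lambda=\emptyset$. And with the stated zero-padding convention the claim of (1) is in fact false in exactly such degenerate situations: if $\lambda=\emptyset$ and $\mu=(3,2)$, then $\lambda$ and $\mu$ are $t$-interlaced for every $t\ge 1$; if $\lambda=\mu=(3,2,1)$, they are $t$-interlaced for every $t\le -2$. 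So no argument can close this gap without excluding $\lambda=\emptyset$ and $\lambda=\mu$ (harmless exclusions for (2), (3), and for how the lemma is used later, but necessary for the literal count of three). Relatedly, your ``same spirit'' treatment of the mixed case $t_1<0\le t_2$ and the purely negative case is incomplete in a way that matters: there the chain of forced equalities only kills the parts of $\lambda$ beyond index $-t_1$, and to finish you must push the equalities back through the initial block $\lambda_i=\mu_i$ ($i\le -t_1$) using the $t_2$-window; what comes out is the dichotomy $\lambda=\emptyset$ or $\lambda=\mu$, not an outright contradiction. The fix is to add the nondegeneracy hypothesis $\lambda\notin\{\emptyset,\mu\}$ (or restate (1) accordingly) and carry out that propagation explicitly; with that in place your interval observation and the reduction of (2) to consecutive parameters go through as you describe.
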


\subsection{Reflection functors and canonical representations}\label{sec:reflandcanonical}
In this section, we suppose that $V$ is a generic representation compatible with an $n$-tuple of nilpotent linear transformations 
$(N_i)$ with Jordan forms given by $\bnu$.  
Under some assumptions on $\bnu$, {and assuming $Q$ is Dynkin,} we show that the result of applying a reflection functor to $V$ is isomorphic to the generic representation compatible with an $n$-tuple of nilpotent linear transformations of Jordan form $\bnu'$, where $\bnu'$ is determined by $\bnu$.

\if false
We say that two partitions $\lambda$ and $\mu$ are \textit{$t$-interlaced}, for $t\geq 0$, if 
$$\mu_1\geq \dots\geq \mu_{t+1} \geq \lambda_1\geq \mu_{t+2} 
\geq \mu_{t+3}\geq \lambda_2\geq \mu_{t+4}\geq \mu_{t+5}
\geq \lambda_3 \dots.$$
We think of $\lambda$ and $\mu$ as being padded with an infinite string of zeros, and we insist that the above inequalities hold there too. (In other words, after the last non-zero part of $\lambda$, there can be at most two non-zero values of $\mu$, and before the last non-zero part of $\lambda$, all the parts of $\mu$ must be non-zero.)

We say that $\lambda$ and $\mu$ are \textit{$t$-interlaced} for $t\leq 0$
if $\lambda_1=\mu_1$, $\lambda_2=\mu_2$,\dots, $\lambda_{-t}=\mu_{-t}$, and 
$$\mu_{-t+1} \geq \lambda_{-t+1} \geq \mu_{-t+2} 
\geq \mu_{-t+3}\geq \lambda_{-t+2}\geq \mu_{-t+4}\geq \mu_{-t+5}
\geq \lambda_{-t+3} \dots.$$

\fi 

Let $k$ be a source of $Q$.  
Fix $\bd\in \mathbb N^n$, and $\bnu \vdash \bd$. We let $\nu^\adj$ be the partition whose parts are $\bigsqcup_{a:k\rightarrow t(a)} \nu^{t(a)}$ (i.e., we take the multiset of all the parts of all the neighbours of $k$, with multiplicity given by the number of arrows to each neighbour).
Suppose that $\nu^k$ and $\nu^\adj$ 
are $t$-interlaced for some $t$.  Define 
$\refl_k(\bnu)$ to be the $n$-tuple of partitions that coincides with $\bnu$ except that $\nu^k$ has been replaced by $\diff(\nu^\adj,\nu^k)$.  

\begin{lemma} \label{big-lemma} 
Let $k$ be a source of $Q$ and let $\bnu\vdash \bd$.  Let $\lambda=\nu^k$, and 
$\mu=\nu^{\adj}$, and suppose that $\lambda$ and $\mu$ are
$t$-interlaced for some $t\in\mathbb Z$.  
Let $N$ be a nilpotent linear operator of Jordan form $\bnu$. Let $V$ be a representation of $Q$ chosen generically among representations compatible with $N$.  
Then the Jordan blocks of $N$ as it acts on the representation $R_k^-(V)$ are given by $\refl_k(\bnu)$, and $V$ does not have any indecomposable summands of the form $S_k$.  
\end{lemma}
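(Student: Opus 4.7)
Since $k$ is a source, the reflection functor $R_k^-$ modifies only the data at vertex $k$, replacing $V_k$ by the cokernel of the canonical map $f \colon V_k \to V_\adj$ induced by $V$. The endomorphism $N$ descends to this cokernel because $f$ is compatible with $N_k$ and $N_\adj$, so the lemma reduces to two claims about the generic compatible $f$: that $f$ is injective (equivalently, $V$ has no $S_k$-summand), and that the induced action on $\operatorname{coker}(f)$ has Jordan type $\diff(\mu,\lambda)$, where $\mu := \nu^\adj$ and $\lambda := \nu^k$. The compatible maps form an affine, hence irreducible, subvariety $X \subseteq \Hom(V_k, V_\adj)$, cut out by the linear conditions $f(v_r) \in \ker N_\adj^{\lambda_r}$ for $v_r$ a generator of the $r$-th Jordan block of $V_k$. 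I would first treat the case $t \geq 0$; the case $t < 0$ is handled by a direct-sum reduction that splits off the first $-t$ blocks of $\lambda$ and $\mu$, which coincide by the $t$-interlacing condition.

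The key step is to exhibit an explicit $f_0 \in X$ realizing the claimed cokernel. Pairing each $\lambda_r$-block with the $\mu$-blocks of sizes $\mu_{t+2r-1}$ and $\mu_{t+2r}$, I would set
\[ f_0(v_r) := N_\adj^{\mu_{t+2r-1} - \lambda_r}\, w_{t+2r-1} + w_{t+2r}, \]
where $w_j$ is a generator of the $j$-th block of $V_\adj$. The interlacing $\mu_{t+2r-1} \geq \lambda_r \geq \mu_{t+2r}$ guarantees $f_0 \in X$. A direct basis computation shows the $N_\adj$-cyclic span of $f_0(v_r)$ is $\lambda_r$-dimensional and lies inside $V_\adj^{(t+2r-1)} \oplus V_\adj^{(t+2r)}$; since these paired blocks are disjoint for different $r$, the map $f_0$ is injective. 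The same local computation shows that each paired quotient $(V_\adj^{(t+2r-1)} \oplus V_\adj^{(t+2r)}) / \langle f_0(v_r)\rangle_{N_\adj}$ is cyclic, generated by $w_{t+2r-1}$, of dimension $\mu_{t+2r-1} + \mu_{t+2r} - \lambda_r$; combined with the unpaired blocks $V_\adj^{(j)}$ for $j \leq t$, the cokernel at $f_0$ has Jordan type $\diff(\mu,\lambda)$. Since injectivity is an open condition on $X$, it holds generically.

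For the Jordan form of the generic cokernel: on the open locus where $f$ is injective (so $\dim \im f = |\lambda|$ is constant), the quantity
\[ a_p(f) := \dim \ker\bigl(N_\adj^p|_{V_\adj/\im f}\bigr) = \dim \ker N_\adj^p + \dim(\im N_\adj^p \cap \im f) - \dim \im f \]
is upper-semicontinuous in $f$, so the generic cokernel Jordan form is dominance-maximum among those attained. The $f_0$ computation gives the lower bound: the generic cokernel Jordan form is at least $\diff(\mu,\lambda)$ in dominance. The main obstacle is the matching upper bound---that no compatible $f$ gives a dominance-strictly-larger Jordan form on the cokernel. My approach is to invoke the classical Klein-type theorem that the possible Jordan forms of the quotient, when a nilpotent module of Jordan type $\lambda$ is embedded in one of Jordan type $\mu$, are exactly the partitions $\nu$ with Littlewood--Richardson coefficient $c^\mu_{\lambda,\nu}>0$, and to verify combinatorially (using the $t$-interlacing) that the dominance-largest such $\nu$ is precisely $\diff(\mu,\lambda)$. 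This final combinatorial check is where the full content of the interlacing hypothesis is exploited.
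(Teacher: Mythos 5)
Your proposal is correct in outline but takes a genuinely different route from the paper. The paper analyzes the generic compatible map $f\colon V_k\to V_\adj$ directly: after expressing $f(v_i)$ in terms of generators $w_{ij}$ of the blocks of $V_\adj$, it shows (Lemma \ref{lem:directsum}) that generically the vectors $x_i,y_i$ (together with $w_i$ for $i\le t$) built out of $f$ give a new direct sum decomposition of $V_\adj$ adapted to $f$, so that $f$ splits into maps $\fk[N]/N^b\to \fk[N]/N^a\oplus\fk[N]/N^c$ whose cokernels are read off from Lemma \ref{Lem_coker_dim}; this identifies the Jordan type of the generic cokernel in one stroke, with no semicontinuity argument and no Littlewood--Richardson input. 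You instead exhibit a special map (your $f_0$ is exactly the ``particular choice'' the paper uses inside the proof of Lemma \ref{lem:directsum}), use it together with upper semicontinuity of $\dim\ker\bigl(N^p|_{\operatorname{coker}f}\bigr)$ on the irreducible space of compatible maps to get generic injectivity and the dominance lower bound $\geq\diff(\mu,\lambda)$, and then obtain the upper bound from the Green/Klein description of possible quotient types as those $\nu$ with $c^{\mu}_{\lambda\nu}>0$, plus the combinatorial claim that under $t$-interlacing no such $\nu$ strictly dominates $\diff(\mu,\lambda)$. That architecture is sound, but two points deserve attention. First, the combinatorial claim you defer is true, and it is essentially the paper's Lemma \ref{jf-interlace}(2), which is deduced elementarily from the inequality saying $\mu$ dominates the partition formed by combining the parts of $\lambda$ and $\nu$ (a kernel-dimension count; you do not need the full LR machinery), so this step should be written out rather than asserted---it is where the interlacing hypothesis actually gets used, and without it your argument only gives ``$\geq\diff(\mu,\lambda)$''. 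Second, your handling of $t<0$ (``split off the first $-t$ equal blocks'') is only a sketch; the cleanest fix is to note that your explicit $f_0$ extends verbatim (send $v_r\mapsto w_r$ for $r\le -t$ and $v_r\mapsto N^{\mu_{2r+t-1}-\lambda_r}w_{2r+t-1}+w_{2r+t}$ otherwise) and that the dominance upper bound covers $t\le 0$ as well. What your route buys is independence from the somewhat delicate re-decomposition of $V_\adj$ in Lemma \ref{lem:directsum}; what the paper's route buys is a self-contained determination of the generic cokernel, with the dominance bound of Lemma \ref{jf-interlace} reserved for the stronger statement in Theorem \ref{th-ref-nil}.
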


\begin{remark}
Recall that applying $R_k^-$ to the representation $V = ((V_i)_{i \in Q_0}, (f_a)_{a \in Q_1})$ replaces vector spaces $V_k$ with $V^\prime_k$, which is the cokernel of the map $(f_a)_{a:k\to t(a)}: V_k \longrightarrow \left(\bigoplus_{a: k \to t(a)} V_{t(a)}  \right)$ and the other vectors spaces $V$ are unchanged. There is a natural action of $N$ on this cokernel. Proving the lemma amounts to showing that the Jordan blocks of the action of $N$ on this cokernel are given by $\diff(\nu^\adj,\nu^k)$.
\end{remark}

The dual version of the lemma, where $k$ is a sink, also holds. This follows by applying the lemma to the representation of the opposite quiver using the dual vector spaces.

\begin{proof}
Fix $t$ such that $\lambda$ and $\mu$ are $t$-interlaced.  Suppose first that $t\geq 0$.

Let $V_1,\dots,V_n$ be a collection of vector spaces with 
dimensions given by $\bd=(d_1,\dots, d_n)$.  
Fix $N$ an $n$-tuple of nilpotent linear transformations, with 
$\JF(N)=\bnu$. 

Let $U$ be a vector space on which a nilpotent linear transformation $T$ acts, and let $\lambda$ be the Jordan form of $T$.  We say that 
$\{u_1,\dots,u_{\ell(\lambda)}\}$  
is a \emph{good set of generators} for $U$ if 
dim$\fk[T]u_i=\lambda_i$ {for all $i \in \{1, \ldots, \ell(\lambda)\}$} and $U=\fk[T]u_1\oplus\cdots \oplus \fk[T]u_{\ell(\lambda)}$.

Fix a good set of generators for $V_k$ with respect to the action of 
$N_k$. There is a natural action of $N$ on $V_\adj$.  
In $V_{\adj}$, fix $N$-invariant subspaces $W_i$ such that for each $i$, dim$W_i=\mu_i$, each $W_i$ is generated by a single vector, and $V_{\adj}=\bigoplus_j W_j$. 

A map $f\in \Hom _{\fk[N]}(V_k,V_{\adj})$ can be specified by giving the image of each $v_i$.  We can take $f(v_i)$ and split it up according to the subspaces $W_j$. $$ f(v_i)=\sum_{j} \tilde w_{ij}$$ with $\tilde w_{ij} \in W_j$. However, we would like to record the fact that typically $\tilde w_{ij}$ cannot be arbitrary in $W_j$ because $f$ must respect the $k[N]$-module structure. So let us write instead:
$$f(v_i)=\sum_{j\leq 2i+t-1} N^{\mu_{j} - \lambda_i}w_{ij} + \sum_{j>2i+t-1} w_{ij},$$
where the elements $w_{ij}$ can be chosen arbitrarily in $W_j$, and they automatically define such a map $f$. 

There is a dense open subset among the representations $V$ compatible with $N$ such that if we take $f$ to be the corresponding map from $V_k$ to $V_\adj$, then $w_{ij}$ generates $W_j$ as a module over $\fk[N]$.  

Next, we want to rewrite $f$. This will lead to a different direct sum decomposition of $V_{\adj}$. As a result of the new direct sum decomposition, the problem  reduces to the situation where $k[N]/N^b$ is mapped generically into $k[N]/N^a \oplus k[N]/N^c$ with $a\geq b \geq c$.

  Define
      \begin{eqnarray*}
    x_i&=& \sum_{j\leq 2i+t-1} N^{\mu_j-\mu_{2i+t-1}} w_{ij}\\
    y_i&=& \sum_{j> 2i+t-1} w_{ij}. \end{eqnarray*}
Then we have the following:
\begin{equation}\label{key} f(v_i)=N^{\mu_{2i+t-1}-\lambda_i} x_i + y_i
\end{equation}

For $1\leq i\leq t$, define $w_i$ to be a generator for $W_i$. The following lemma shows that the vectors which we have found above form a good set of generators for $V_{\adj}$.

\begin{lemma}\label{lem:directsum}
There is a direct sum decomposition of $V_{\adj}$ into cyclic subspaces generated by the elements $x_i$ and $y_i$ as above, and by $w_i$ for $1\leq i \leq t$, where 
  
\begin{enumerate}
\item[(i)] $w_i$ generates a subspace of dimension $\mu_i$ (for $1\leq i \leq t$),

\item[(ii)] $x_i$ generates a subspace of dimension $\mu_{2i+t-1}$,

\item[(iii)] $y_i$ generates a subspace of dimension $\mu_{2i+t}$, and

\item[(iv)] these subspaces are complementary.
\end{enumerate}
\end{lemma}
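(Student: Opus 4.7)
The argument has three parts. Items (i)--(iii) compute the dimensions of the cyclic subspaces generated by $w_i$, $x_i$, and $y_i$ individually, while (iv)---that these cyclic subspaces together give a direct sum decomposition---is the heart of the claim.

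For (i) the statement is immediate: $w_i$ was chosen as a generator of $W_i$, which is a cyclic $\fk[N]$-submodule of dimension $\mu_i$.

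For (ii), my plan is to exploit the fact that, by genericity of the $w_{ij}$, each $w_{ij}$ can be taken to be a generator of $W_j$. The summand of $x_i$ at $j = 2i+t-1$ has exponent $\mu_{2i+t-1} - \mu_{2i+t-1} = 0$ and thus equals $w_{i, 2i+t-1}$ itself, a generator of $W_{2i+t-1}$. Each remaining summand $N^{\mu_j - \mu_{2i+t-1}} w_{ij}$ for $j < 2i+t-1$ lies in the cyclic module $\fk[N] w_{ij} = W_j$ of dimension $\mu_j$, and after shifting by $N^{\mu_j - \mu_{2i+t-1}}$ has annihilator exactly $N^{\mu_{2i+t-1}}$. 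Consequently $N^{\mu_{2i+t-1}}$ annihilates $x_i$, but no lower power does, since its projection to $W_{2i+t-1}$ is a generator there. This yields $\dim \fk[N] x_i = \mu_{2i+t-1}$. The argument for (iii) is parallel: the ``top'' summand $w_{i, 2i+t}$ of $y_i$ lies in $W_{2i+t}$, and by monotonicity of $\mu$ every other summand $w_{ij}$ with $j > 2i+t$ has annihilator dividing $N^{\mu_{2i+t}}$; hence $\fk[N] y_i$ is cyclic of dimension $\mu_{2i+t}$.

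The main obstacle is (iv). My plan is a leading-term argument. Each cyclic subspace in the proposed list carries a distinguished ``top'' index $j$: for $\fk[N] w_i$ it is $j=i$; for $\fk[N] x_i$ it is $j = 2i+t-1$; and for $\fk[N] y_i$ it is $j = 2i+t$. Because the index sets $\{1,\dots,t\}$, $\{2i+t-1\}_{i\ge 1}$, and $\{2i+t\}_{i\ge 1}$ are pairwise disjoint, these tops are distinct, and by genericity the projection (along $V_\adj = \bigoplus_j W_j$) of each cyclic subspace to its top $W_j$ is surjective. To prove linear independence of the cyclic subspaces, I suppose a nontrivial $\fk[N]$-linear relation and project onto the $W_j$ which is the top of a cyclic subspace appearing nontrivially in the relation and for which no larger-indexed top appears; since that $W_j$ receives the leading contribution only from one generator in the list, its coefficient in the relation must vanish, contradicting nontriviality. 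A dimension count---using the interlacing inequalities to verify that $\sum_{i=1}^t \mu_i + \sum_i (\mu_{2i+t-1} + \mu_{2i+t}) = |\mu| = \dim V_\adj$---then upgrades linear independence to a direct sum decomposition.

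The hardest point is the linear-independence argument in (iv): a given $\fk[N] x_i$ or $\fk[N] y_i$ has nontrivial projections to many $W_j$'s below its top, so the leading-term analysis must be carried out in the right order, using both the strict ordering of the top indices and the interlacing constraints $\mu_{2i+t-1} \ge \lambda_i \ge \mu_{2i+t}$ to ensure that the degree shifts $N^{\mu_j - \mu_{2i+t-1}}$ are nonnegative and that the generators $w_{ij}$ can in fact be chosen generically inside $W_j$.
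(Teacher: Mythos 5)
Your items (i)--(iii) are fine (the paper simply declares them clear), but your proof of (iv) --- which is indeed the heart of the lemma --- has a genuine gap. The leading-term argument cannot work as described: although the tops are distinct, each $\fk[N]x_i$ has (generically nonzero) components in \emph{every} $W_j$ with $j<2i+t-1$, while each $\fk[N]y_{i'}$ has components in \emph{every} $W_j$ with $j>2i'+t$. So if you project a relation onto the largest top $W_{j^*}$ that occurs, all the $y_{i'}$ with $2i'+t<j^*$ also contribute to $W_{j^*}$ and can cancel the contribution of the subspace whose top is $j^*$; projecting onto the smallest top fails symmetrically because of the $x_i$'s. No ordering of the generators makes the system triangular, and in fact the conclusion is \emph{false} under the only hypotheses your argument uses (that each $w_{ij}$ generates $W_j$, equivalently that each top projection is surjective). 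For instance, take $t=0$, $\lambda=(1,1)$, $\mu=(1,1,1,1)$, so $N=0$ and each $W_j$ is a line: then $x_1=w_{11}$, $y_1=w_{12}+w_{13}+w_{14}$, $x_2=w_{21}+w_{22}+w_{23}$, $y_2=w_{24}$, and if one chooses $w_{12}=w_{22}$ and $w_{13}=w_{23}$ (all $w_{ij}\neq 0$, so all tops are generators) there is a nontrivial relation among $x_1,x_2,y_1,y_2$. Hence independence holds only for a \emph{generic} choice of $f$, and any correct proof must exploit genericity beyond surjectivity of the top projections; your closing remark that the analysis must be ``carried out in the right order'' cannot be made to work.

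The paper's proof handles exactly this point differently: the dimension statements being clear, it suffices to show that the listed vectors together with suitable $N$-powers of them span $V_\adj$; this spanning is the non-vanishing of a determinant which is a polynomial in the coefficients $w_{ij}$ defining $f$, hence an open condition on $f$; and it is nonempty because it visibly holds for the special (degenerate) choice in which $x_i$ is a generator of $W_{2i+t-1}$ and $y_i$ a generator of $W_{2i+t}$, so that the cyclic subspaces are just the $W_j$ themselves. Openness plus one witness gives the statement for generic $f$, which is the setting of the lemma. To repair your argument, replace the triangularity claim by this specialize-and-use-openness step (or some other argument that genuinely uses a dense open set of choices of the $w_{ij}$). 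A minor shared caveat: the dimension identity you invoke at the end requires $\mu$ to have no nonzero parts beyond index $t+2\ell(\lambda)$, which $t$-interlacing does not quite force; this wrinkle is present in the paper's formulation as well, so it is not specific to your approach.
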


\begin{proof} It is clear that $x_i$, $y_i$, and $w_i$ generate subspaces of the claimed dimensions.  It is therefore enough to show that the given vectors and suitable powers of $N$ applied to the given vectors suffice to span $V_\adj$.  This is a condition that a certain determinant (which we can think of as a function of the coefficients defining $f$) is non-zero.  For a particular choice of $f$, this is certainly true, namely the case that $x_i$ generates $W_{\mu_{2i+t-1}}$
and $y_i$ generates 
$W_{\mu_{2i+t}}$.  Since the condition that a determinant vanishes is a closed condition and it does not hold for all choices of $f$, it does not hold for a generic choice of $f$.  \end{proof}

Thus we obtained the alternative direct sum decomposition of $V_\adj$ given by $$\bigoplus_i \fk[N]w_i \oplus \bigoplus_{i} \fk[N]x_i \oplus \bigoplus_i \fk[N]y_i,$$ and (\ref{key}) provides a very simple description of $f$.  Specifically, each $v_i$ is mapped into exactly two summands of this decomposition, and no two $v_i$ are mapped into the same summand. Thus the problem reduces to the situation where $\fk[N]/N^b$ is mapped generically into $\fk[N]/N^a \oplus \fk[N]/N^c$ with $a\geq b \geq c$ as claimed. The following lemma is easily verified.

\begin{lemma}\label{Lem_coker_dim}
The cokernel of a generic $\fk[N]$-homomorphism from $\fk[N]/N^b$ into $\fk[N]/N^a \oplus \fk[N]/N^c$ with $a\geq b \geq c$ is isomorphic to $\fk[N]/N^{a+c-b}$.  
\end{lemma}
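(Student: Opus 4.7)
The plan is to parametrize a generic $\fk[N]$-module homomorphism $\phi: \fk[N]/N^b \to \fk[N]/N^a \oplus \fk[N]/N^c$ explicitly and then exhibit a cyclic generator of its cokernel of the correct order. Such a $\phi$ is determined by $\phi(1) = (p,q)$, and the requirement $N^b \phi(1) = 0$ forces $p \in N^{a-b}(\fk[N]/N^a)$ (using $a \ge b$), while placing no condition on $q$ (using $c \le b$). Writing $p = N^{a-b} u$, the pair $(u,q) \in \fk[N]/N^b \times \fk[N]/N^c$ is arbitrary, and the Zariski-open, hence generic, condition is that both $u$ and $q$ have nonzero constant term in $\fk[N]$. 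Rescaling the generator $1 \in \fk[N]/N^b$ by $u^{-1}$ and then rescaling the second summand of the target by $q^{-1}$ lets me normalize so that $\phi(1) = (N^{a-b}, 1)$ without changing the isomorphism class of the cokernel.

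With this normalization, the image of $\phi$ is the $\fk$-span of $\{(N^{a-b+i}, N^i) : 0 \le i < b\}$, which has dimension $b$ (so in particular $\phi$ is injective), and hence the cokernel $C$ has dimension $a+c-b$. Next I would show that $C$ is cyclic with generator $\bar e := \overline{(1,0)}$: the relation $\phi(1) \equiv 0$ in $C$ gives $\overline{(0,1)} \equiv -N^{a-b}\bar e$, and applying powers of $N$ then puts every $\overline{(0,N^i)}$ alongside every $\overline{(N^i,0)} = N^i \bar e$ inside $\fk[N]\bar e$, so $\bar e$ generates $C$.

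Finally, I would compute the order of $\bar e$: the condition $N^i \bar e = 0$ in $C$ is equivalent to $(N^i, 0) = \phi(N^j) = (N^{a-b+j}, N^j)$ for some $j$, which forces $N^j \equiv 0$ in $\fk[N]/N^c$, i.e., $j \ge c$, and then $i \ge a-b+c$. Combined with the dimension count, $\bar e$ has order exactly $a+c-b$, yielding $C \cong \fk[N]/N^{a+c-b}$. I do not expect a substantive obstacle; the only bookkeeping point is the edge case $b = c$, where $a-b+c = a$ and one must invoke $N^a = 0$ in $\fk[N]/N^a$ directly rather than arguing from the existence of a nonzero power. This is consistent with the paper's remark that the lemma is easily verified.
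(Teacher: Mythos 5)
Your proof is correct. The paper offers no argument for this lemma---it is stated with only the remark that it is ``easily verified''---so your explicit parametrization, normalization of $\phi(1)$ to $(N^{a-b},1)$, and cyclic-generator computation supply exactly the verification the paper omits. One small point to tighten: in the final order computation, $(N^i,0)$ lying in the image means $(N^i,0)=\phi(f)$ for some polynomial $f$, not necessarily a single monomial $N^j$; but that step is redundant anyway, since once you know the cokernel is cyclic, annihilated by $N^a$, and of dimension $a+c-b$, it is forced to be $\fk[N]/N^{a+c-b}$. (In the degenerate case $c=0$, which does occur in the paper's application after padding partitions with zeros, simply drop the genericity condition on $q$; the argument is unchanged.)
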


For $t\geq 0$, this establishes the claim that for $V$ chosen in a dense open subset of the representations compatible with $N$,
the natural action of $N$ on $R_k^-(V)$ has Jordan form 
$\refl_k(\bnu)$.  The argument for $t\leq 0$ is similar.

This also shows that generically $V$ does not have
any indecomposable summands isomorphic to $S_k$, since if it did, the map from $V_k$ to $V_\adj$ would not be injective.\end{proof}

For convenience, we state the following theorem in the Dynkin setting, which allows us to use Corollary \ref{cor:genrep} and thus to refer to $\GR(\bnu)$. 

\begin{theorem} \label{th-ref-canon} 
Let $k$ be a source of {a Dynkin quiver} 
$Q$ and let $\bnu\vdash \bd$.  Let $\lambda=\nu^k$, and 
$\mu=\nu^{\adj}$, and suppose that $\lambda$ and $\mu$ are
$t$-interlaced for some $t\geq 0$. Let $V\simeq\GR(\bnu)$.     
Then $R_k^-(V)$ is isomorphic to $\GR(\refl_k(\bnu))$.   
\end{theorem}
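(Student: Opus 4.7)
The plan is to apply Lemma~\ref{big-lemma} to a suitably generic representation and then use the geometric picture of Section~\ref{ref-and-geom} to transfer the generic isomorphism class across the reflection functor.

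First I would fix a nilpotent endomorphism $N$ on vector spaces of dimensions $\bd$ with $\JF(N) = \bnu$, let $X$ be the irreducible affine variety of representations of $Q$ compatible with $N$ (irreducibility follows from the proof of Theorem~\ref{thm:GenericMpi}), and pick $V$ in the intersection of two dense open subsets of $X$: the subset from Corollary~\ref{cor:genrep} on which every representation is isomorphic to $\GR(\bnu)$, and the subset from Lemma~\ref{big-lemma} consisting of representations with no $S_k$-summand whose image under $R_k^-$ has induced Jordan form $\refl_k(\bnu)$. This intersection is nonempty because $X$ is irreducible. For such $V$, we have $V \simeq \GR(\bnu)$, and $W := R_k^-(V)$ is a representation of $Q'$ with dimension vector $\refl_k(\bd)$ equipped with an induced nilpotent endomorphism $N'$ of Jordan form $\refl_k(\bnu)$.

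It then remains to show $W \simeq \GR(\refl_k(\bnu))$. The target variety $X'$ of representations of $Q'$ compatible with $N'$ is again an irreducible affine space whose generic isomorphism class is $\GR(\refl_k(\bnu))$ by Theorem~\ref{thm:GenericMpi} and Corollary~\ref{cor:genrep}, so the task reduces to checking that the single point $W \in X'$ lies in the generic dense open subset of $X'$. For this I would invoke the geometric identification of Section~\ref{ref-and-geom}: the reflection functor descends to a canonical isomorphism $\trepi(Q, \bd) \simeq \treps(Q', \refl_k(\bd))$, and restricting to the loci cut out by compatibility with $N$ and $N'$ should pair an irreducible subfamily on each side, with dense opens mapping to dense opens. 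The generic isomorphism class $\GR(\bnu)$ on the source is therefore carried to the generic class $\GR(\refl_k(\bnu))$ on the target, yielding $W \simeq \GR(\refl_k(\bnu))$.

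The main obstacle is making this last geometric step fully rigorous. The quotients in Section~\ref{ref-and-geom} are by the full group $GL(d_k)$, whereas the $N$-compatibility subvariety is stabilized only by the centralizer of $N_k$ inside $GL(d_k)$. One has to check that, after descending to the Grassmannian of subspaces of $V_\adj$, the image of the $N$-compatibility locus is an irreducible subvariety whose correspondent under the canonical identification of Grassmannians matches the image of the $N'$-compatibility locus on the $Q'$-side; once this matching of irreducible subvarieties is in place, irreducibility together with Corollary~\ref{cor:genrep} applied on both sides completes the proof.
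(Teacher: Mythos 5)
Your setup is the same as the paper's (fix $N$ with $\JF(N)=\bnu$, take $V$ generic among $N$-compatible representations, apply Lemma \ref{big-lemma}, then try to transfer genericity through the identification $\trepi(Q,\bd)\simeq\treps(Q',\refl_k(\bd))$ of Section \ref{ref-and-geom}), but the step you flag as the ``main obstacle'' and then assert ``should'' work is exactly the crux, and it is not supplied. Write $X$ for the image in $\treps(Q',\refl_k(\bd))$ of the locus of representations compatible with $N$ up to change of basis at $k$, and $U$ for the locus of representations compatible with \emph{some} nilpotent endomorphism of Jordan form $\refl_k(\bnu)$. Lemma \ref{big-lemma} gives only one inclusion-type statement: a dense open subset of $X$ lies in $U$. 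Nothing in the canonical identification of Grassmannians rules out that $U$ is strictly larger, with $X$ a proper closed subset of it; in that case the generic isomorphism class on $U$ --- which is $\GR(\refl_k(\bnu))$ by definition --- could differ from the class $R_k^-(V)$ attained generically on $X$, and your conclusion fails. So knowing that $W=R_k^-(V)$ carries an induced nilpotent endomorphism of Jordan form $\refl_k(\bnu)$ only places $W$ somewhere in the $N'$-compatible variety, not in its generic dense open subset. (Also, the obstacle is not really the $GL(d_k)$-versus-centralizer issue you name: the paper's loci in $\trepi$ and $\treps$ are taken ``compatible with $N$ up to change of basis at $k$,'' which absorbs that; the genuine issue is the possible strict containment $X\subsetneq U$.)

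The paper closes this gap with a reflect-back contradiction argument that is missing from your proposal. If there were a point of $U\setminus X$, it determines a nilpotent endomorphism $N'$ of Jordan form $\refl_k(\bnu)$ and a compatible representation not in $X$, hence an open set $Z$ of $N'$-compatible representations lying outside $X$. Because $\operatorname{diff}(\nu^{\adj},\nu^k)$ and $\nu^{\adj}$ are $(-t)$-interlaced, the dual (sink) version of Lemma \ref{big-lemma} applies to a generic $W'\in Z$: the endomorphism induced by $N'$ on $R_k^+(W')$ has Jordan form $\bnu$, forcing $R_k^+(W')$ to correspond to a point of $X$, contradicting $W'\notin X$. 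Hence $U\subseteq X$, and since the generic isomorphism class on $X$ is $R_k^-(V)$, Corollary \ref{cor:genrep} yields $R_k^-(V)\simeq\GR(\refl_k(\bnu))$. Without this step (or some substitute argument proving $U\subseteq X$, or directly that $W$ is generic among $N'$-compatible representations), your proof establishes only that $R_k^-(V)$ admits a nilpotent endomorphism with Jordan form $\refl_k(\bnu)$, not the stated isomorphism.
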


\begin{proof}
Let $N$ be an $n$-tuple of nilpotent linear transformations such that
$\JF(N)=\bnu$.  
As in Section \ref{ref-and-geom}, we consider $\trepi(Q,\bd)$.  The locus of representations compatible with $N$ inside 
$\rep(Q,\bd)$ is irreducible, and it follows that the same is true of the locus of representations compatible with $N$ up to change of basis at $k$ inside $\trepi(Q,\bd)$.  Call this locus $X$.  More concretely, the points of $X$ correspond to representations compatible with $N_i$ for $i\ne k$ and some nilpotent transformation with Jordan form $\nu^k$ at $k$.  
Under the identification from Section \ref{ref-and-geom}, $X$ also corresponds to representations compatible with $N$ up to change of basis at $k$ inside $\treps(Q',\bd')$.  On a dense open
set inside $X$,  by Lemma \ref{big-lemma}, the
action of $N$ has Jordan form $\refl_k(\bnu)$.  Thus, on this open set inside $X$, we find representations compatible with a nilpotent endomorphism of Jordan form $\refl_k(\bnu)$. 

We would like to conclude that the generic representation compatible with a nilpotent endomorphism of Jordan form $\refl_k(\bnu)$ is isomorphic to $R^-_k(V)$.  This does not yet follow, because we could imagine that there is some larger region $U$ inside $\treps(Q',\bd')$ consisting of representations compatible with a nilpotent endomorphism of Jordan form $\refl_k(\bnu)$, with $X$ closed inside $U$.  Choose a point from $U\setminus X$.  This determines a nilpotent endomorphism $N'$ and a representation $V'$ compatible with it.  Considering all the representations compatible with $N'$, we know that not all of them correspond to points in $X$ (since $V'$ does not); thus, an open set of the representations compatible with $N'$ lie outside $X$.  Call this open set $Z$.  
We can now apply the dual version of Lemma \ref{big-lemma} to conclude that for $W$ a generic representation in $Z$, the nilpotent endomorphism induced on $R^+_k(W)$ by $N'$ has Jordan form $\bnu$.  But by assumption $R^+_k(W)$ does not correspond to a point of $\trepi(Q,\bd)$ lying in $X$, which is a contradiction.  It follows that the region of $\treps(Q',\bd')$ compatible with some nilpotent endomorphism with Jordan form $\refl_k(\bnu)$ is contained in $X$.  
Since the generic isomorphism class inside $X$ as a subset of $\treps(Q',\bd')$ is $R^-_k(V)$, we are done.   
\end{proof}

\subsection{Reflection functors and nilpotent endomorphisms}\label{sec:refl_nilpotent} In this section, we assume that {$Q$ is Dynkin and that} we have both $V \simeq \GR(\bnu)$ and $\bnu= \JFg(V)$, and we deduce that $\JFg(R^-_k(V))=\sigma_k(\bnu)$, under some assumptions on $\bnu$.
This complements the result of Theorem~\ref{th-ref-canon}, which tells us that
under weaker hypotheses, $R^-_k(V)\simeq \GR(\sigma_k(\bnu))$.  Combining the two results, under some assumptions on $\bnu$, we go from the assumption that the isomorphism class of $V$ and the Jordan form data $\bnu$ each determine the other, and we deduce that the same is true for $R^-_k(V)$ and $\sigma_k(\bnu)$.

We first need the following lemma.

\begin{lemma} \label{jf-interlace}
Let $0\rightarrow P \rightarrow Q \rightarrow R \rightarrow 0$
be a short exact sequence of finite-length $\fk[N]$-modules.
\begin{enumerate}
\item Let the Jordan forms of $N$ acting on $P$, $Q$, and $R$ be 
$\lambda$, $\mu$, and $\nu$, respectively. Then $\mu\geq \lambda+\nu$,
where we write $\lambda+\nu$ for the partition whose multiset of parts is composed of the parts of $\lambda$ and $\nu$.  
\item Suppose further that $\lambda$ and $\mu$ are $t$-interlaced for some $t\in\mathbb Z$.  
Then $\nu\not> \diff(\mu,\lambda)$.  
\end{enumerate}
\end{lemma}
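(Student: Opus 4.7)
For part (1), the plan is to apply the snake lemma to the short exact sequence $0\to P\to Q\to R\to 0$ with vertical maps given by multiplication by $N^k$. Exactness of the resulting long sequence of kernels and cokernels gives $\dim\ker N^k|_Q\le \dim\ker N^k|_P+\dim\ker N^k|_R$ for every $k\ge 0$ (alternatively, one sees this directly: $\ker N^k|_Q\cap P=\ker N^k|_P$, and the quotient $\ker N^k|_Q/\ker N^k|_P$ injects into $\ker N^k|_R$). Since $\dim\ker N^k$ on a nilpotent module equals the prefix sum $\sum_{j\le k}$ of the conjugate partition, this inequality reads $\mu^t\le (\lambda+\nu)^t$ in dominance, equivalently $\mu\ge \lambda+\nu$, which is part (1).

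For part (2), I would combine part (1) with the following purely combinatorial claim: if $\lambda$ and $\mu$ are $t$-interlaced, then $\diff(\mu,\lambda)$ is the greatest element, in dominance order, of the set $S=\{\eta\vdash |\mu|-|\lambda|:\lambda+\eta\le \mu\}$. Since part (1) places $\nu$ in $S$, applying the claim with $\eta=\nu$ gives $\nu\le \diff(\mu,\lambda)$, and in particular $\nu\not>\diff(\mu,\lambda)$. To establish the claim, I would first unpack the pairing in the definition of $\diff$ into the prefix-sum identity $\sum_{j\le k}\diff(\mu,\lambda)_j=\sum_{j\le 2k-t}\mu_j-\sum_{j\le k-t}\lambda_j$, valid whenever $k-t\ge 0$ (the remaining case $0\le k<t$ for $t\ge 0$ gives simply $\sum_{j\le k}\mu_j$ on the right and is handled analogously). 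Then, for any $\eta\in S$, the sum of the top $k$ parts of $\eta$ together with the top $k-t$ parts of $\lambda$ picks out $2k-t$ distinct members of the multiset $\lambda\cup\eta$, so it is at most $\sum_{j\le 2k-t}(\lambda\cup\eta)_j$, which by the dominance $\lambda+\eta\le\mu$ is at most $\sum_{j\le 2k-t}\mu_j$. Rearranging gives $\sum_{j\le k}\eta_j\le \sum_{j\le k}\diff(\mu,\lambda)_j$ for all $k$, i.e., $\eta\le \diff(\mu,\lambda)$.

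The main obstacle, and the only place where the precise form of the $t$-interlacing hypothesis actually enters, is verifying the prefix-sum identity for $\diff(\mu,\lambda)$. Once that identity is in place, everything else is routine manipulation of dominance inequalities combined with the snake lemma bound from part (1).
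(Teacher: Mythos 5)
Your proposal is correct and follows essentially the same route as the paper: part (1) via the left-exact sequence of kernels of $N^k$ (equivalently the snake lemma) translated into conjugate-partition prefix sums, and part (2) via the prefix-sum formula for $\diff(\mu,\lambda)$ combined with the observation that the top $k$ parts of $\eta$ and the top $k-t$ parts of $\lambda$ are $2k-t$ members of the multiset $\lambda\cup\eta$, then dominance from part (1). The only cosmetic difference is that you phrase part (2) as a direct maximality statement for arbitrary $\eta$ with $\lambda+\eta\le\mu$ (of which you only need, and only prove, the upper-bound half), whereas the paper runs the identical partial-sum computation as a proof by contradiction applied to $\nu$.
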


\begin{proof} (1) For $k$ any positive integer, there is an 
exact sequence 
$$0\rightarrow \ker_{P} N^k \rightarrow \ker_{Q} N^k 
\rightarrow \ker_R N^k,$$
where we write $\ker_{P} N^k$ for the kernel of $N^k$ as it acts on $P$, and similarly for the other two expressions.  

Note that the dimension of $\ker_P N^k$ is the sum of the first $k$ parts of $\lambda^t$, and similarly for $Q$ and $R$.  
From the short exact sequence, it follows that the sum of the first $k$ parts of $\lambda^t$
plus the first $k$ parts of $\nu^t$ is at least as great as the sum of the first $k$ parts of $\mu^t$.  This implies that
$\mu^t \leq (\lambda+\nu)^t$.  Thus $\mu \geq \lambda+\nu$.

(2) Suppose $t\leq 0$. Suppose that $\nu_1+\cdots+\nu_k>\diff(\mu,\lambda)_1+\cdots+\diff(\mu,\lambda)_k$ for some $k\geq 1$. It follows that
\[\nu_1+\cdots+\nu_k+\lambda_{-t+1}+\cdots+\lambda_{-t+k}>\mu_{-t+1}+\cdots\mu_{-t+2k}.\] By adding $\lambda_1+\cdots+\lambda_{-t}=\mu_1+\cdots+\mu_{-t}$ to both sides, we obtain
\[\nu_1+\cdots+\nu_k+\lambda_{1}+\cdots+\lambda_{-t+k}>\mu_{1}+\cdots+\mu_{-t+2k}.\] By the definition of $\lambda+\nu$, we see that 
\[(\lambda+\nu)_1+\cdots+(\lambda+\nu)_{-t+2k}\geq\nu_1+\cdots+\nu_k+\lambda_{1}+\cdots+\lambda_{-t+k},\] which, together with the previous equation, contradicts $\mu\geq\lambda+\nu$. The argument for $t\geq 0$ is analogous.
\end{proof}

The following theorem adds to the hypotheses of Theorem 
\ref{th-ref-canon}: we pick a Jordan form $\bnu$, and not only do we assume that $V\simeq\GR(\bnu)$, but also $\bnu=\JFg(V)$.  In this case, we can also
describe $\JFg(R^-_k(V))$.  

\begin{theorem} \label{th-ref-nil} 
Let $k$ be a source of {a Dynkin quiver} $Q$ and let $\bnu\vdash \bd$.  Let $\lambda=\nu^k$, and 
$\mu=\nu^{\adj}$, and suppose that $\lambda$ and $\mu$ are
$t$-interlaced for some $t\geq 0$.  
Suppose that $V\simeq\GR(\bnu)$ and also $\bnu=\JFg(V)$.
Then $\JFg(R_k^-(V)) = \refl_k(\bnu)$.  
\end{theorem}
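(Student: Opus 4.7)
The plan is to establish both $\JFg(R_k^-(V)) \geq \refl_k(\bnu)$ and $\JFg(R_k^-(V))_k \not> \refl_k(\bnu)^k$. The lower bound is immediate from Theorem \ref{th-ref-canon}: since $R_k^-(V) \simeq \GR(\refl_k(\bnu))$, the target admits a nilpotent endomorphism whose Jordan form is $\refl_k(\bnu)$.

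For the upper bound, I would use that $V$ has no $S_k$ summand (the last assertion of Lemma \ref{big-lemma}), so that $R_k^-$ restricts to an equivalence between the full subcategories of $\rep Q$ and $\rep Q'$ of objects without simple summand at $k$. This induces an algebra isomorphism $\End(V) \cong \End(R_k^-(V))$, and hence a polynomial isomorphism of irreducible algebraic varieties $\Phi \colon \NEnd(V) \to \NEnd(R_k^-(V))$ sending $N \mapsto R_k^-(N)$. The dense open subsets $U_V = \{N : \JF(N) = \bnu\}$ (using $\JFg(V) = \bnu$) and $\Phi^{-1}(U_W)$, where $U_W = \{M : \JF(M) = \JFg(R_k^-(V))\}$, therefore have nonempty intersection in the irreducible variety $\NEnd(V)$, from which I choose $N$ satisfying $\JF(N) = \bnu$ and $\JF(R_k^-(N)) = \JFg(R_k^-(V))$.

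Applied to this $N$, the tautological short exact sequence of $\fk[N]$-modules
\[ 0 \to V_k \to V_\adj \to R_k^-(V)_k \to 0, \]
with $V_\adj = \bigoplus_{a:k\to t(a)} V_{t(a)}$ and $N_\adj$ the direct-sum action, has outer Jordan forms $\lambda = \nu^k$ and $\JFg(R_k^-(V))_k$, and middle Jordan form $\mu = \nu^\adj$. The hypothesis supplies the $t$-interlacing of $\lambda$ and $\mu$, so Lemma \ref{jf-interlace}(2) yields $\JFg(R_k^-(V))_k \not> \diff(\mu, \lambda) = \refl_k(\bnu)^k$. Combined with the lower bound at vertex $k$, this forces equality there. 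At vertices $i \ne k$, the identity $\JFg(R_k^-(V))_i = \nu^i = \refl_k(\bnu)^i$ holds because $R_k^-$ does not alter the linear data at $i$, and $\JFg$ transports across $\Phi$ component by component. Hence $\JFg(R_k^-(V)) = \refl_k(\bnu)$. The main technical hurdle is arranging a single $N$ to be generic simultaneously on both sides of $\Phi$; this rests on the fact that polynomial isomorphisms of irreducible varieties preserve dense open subsets.
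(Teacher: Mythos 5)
Your proposal is correct and takes essentially the same route as the paper: the lower bound $\JFg(R_k^-(V))\geq\refl_k(\bnu)$ comes from exhibiting a nilpotent endomorphism with that Jordan form, and the upper bound at vertex $k$ comes from the short exact sequence $0\to V_k\to V_\adj\to R_k^-(V)_k\to 0$ together with Lemma \ref{jf-interlace}(2) applied to an endomorphism that is generic for both $V$ and $R_k^-(V)$. The only differences are cosmetic: you cite Theorem \ref{th-ref-canon} where the paper reuses Lemma \ref{big-lemma} directly, and you spell out the transfer of genericity across the isomorphism $\End(V)\cong\End(R_k^-(V))$ (and the equality at vertices $i\neq k$), steps the paper leaves implicit.
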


\begin{proof} 
Choose an $n$-tuple of nilpotent transformations $N_i$ with $\JF(N_i)=\bnu^i$, and then choose a generic representation $W$ among those compatible with $N$.
By assumption, $W\simeq V$.  Lemma \ref{big-lemma} applies to $W$, and we conclude that the action of $N$ on $R_k^-(W)$ has Jordan form $\refl_k(\bnu)$.

Thus, $R_k^-(V)$ (which is isomorphic to $R_k^-(W)$) has some nilpotent endomorphism acting on it with Jordan form $\refl_k(\bnu)$.  Since the Jordan form of a generic nilpotent endomorphism is maximal among nilpotent endomorphisms acting on $R_k^-(V)$, we know that 
$\JFg(R_k^-(V))\geq \refl_k(\bnu)$.  

Now consider the short exact sequence of vector spaces 
$$0 \rightarrow V_k \rightarrow V_{\adj} \rightarrow R_k^-(V)_k
\rightarrow 0$$
Let $N'$ be a generic nilpotent endomorphism of $V$, which induces nilpotent transformations of these three vector spaces.  
Lemma \ref{jf-interlace}(2) therefore applies to say that
the Jordan form of $N'$ acting on $R_k^-(V)_k$ is not greater than $\diff(\nu^\adj, \nu^k)$. 

Therefore the inequality which we already proved must be an equality: $\JFg(R_k^-(V))=\refl_k(\bnu)$.  
\end{proof}

\section{Minuscule posets}
\subsection{Definitions}
\label{min-poset-def}

\newcommand{\mP}{P}
\renewcommand{\mod}{\operatorname{mod}}
Let $Q$ be a Dynkin quiver.  
A \textit{heap} over $Q$ is a poset $\sP$ equipped with a {surjective} map $\pi$ from $\sP$ to the vertices of the underlying
graph of $Q$, with the properties that \begin{itemize}
\item[(H1)] the inverse image of $i$ is
totally ordered, 
\item [(H2)] for $i$ and $j$ adjacent, the union of the
inverse images of $i$ and $j$ is totally ordered,
\item [(H3)] the poset structure on $\sP$ is the transitive closure of the relations corresponding to the two previous points.
\end{itemize}

We further say that a heap is \textit{two-neighbourly} if in the interval between any two
consecutive elements of $\pi^{-1}(i)$, there are exactly two occurrences of elements of the form $\pi^{-1}(j)$ for some $j$ adjacent to $i$. Note that the two elements may correspond to different neighbours $j$ and $j'$ or the same neighbour repeated twice. We say a heap has the weaker property of being \textit{neighbourly} if such an interval contains at least two occurrences of elements of the form $\pi^{-1}(j)$ for some $j$ adjacent
to $i$.   

We call a neighbourly heap $(\sP,\pi)$ over $Q$ a \textit{maximal neighbourly heap} if one cannot add a new element to $\sP$ while maintaining the relative order of elements of $\sP$ and obtain a larger neighbourly heap over $Q$. In \cite{wildberger2003minuscule}, Wildberger proves that the set of maximal neighbourly heaps that are also two-neighbourly is exactly the set of minuscule posets. The reader unfamiliar with minuscule posets may take this as the definition of a minuscule poset. We recommend \cite{proctor1984bruhat,green2013combinatorics} for the standard definition of minuscule posets in terms of the representation theory of simple Lie algebras. We give explicit descriptions of the isomorphism types of minuscule posets below. 

A vertex $m$ of $Q$ is called 
\emph{minuscule} if every indecomposable representation of $Q$ supported over $m$ has dimension 1 at $m$. (Properly speaking, this is the definition of a \textit{cominuscule} vertex of $Q$, but since we are working with simply-laced types only, a vertex of $Q$ is minuscule if  and only if it is cominuscule.)

We give explicit descriptions of the minuscule posets, which appear in the context of minuscule representations of complex semisimple Lie algebras and were classified up to isomorphism by Proctor in \cite{proctor1984bruhat}. Recall that for a poset $\P$, an \textit{order ideal} is a subset $\OO \subset \P$ where if $\x \in \OO$ and $\y \le_{\P} \x$, one has that $\y \in \OO$. Similarly, an \textit{order filter} of $\P$ is a subset of $\OO \subset \P$ where if $\x \in \OO$ and $\y \ge_{\P} \x$, one has that $\y \in \OO$. We let $\J(\P)$ denote the poset of order ideals of $\P$, ordered by inclusion. By \cite[Theorem 8.3.10]{green2013combinatorics}, there is a minuscule poset for each choice of a minuscule vertex of a simply-laced Dynkin diagram. Their isomorphism types appear in Table~\ref{table:1}. 
There, we write $[n]$ for the poset that is a chain whose elements are $1, \ldots, n$ in increasing order.

\begin{table}[!htbp]
\centering
\begin{tabular}{c c c } 
 \hline
 Type & $m$ & minuscule poset \\ [0.5ex] 
 \hline\hline
 $A_n$ & $k$ & $[k]\times[n+1-k]$ \\ 
 $D_n$ & $1$ & $\J^{n-3}([2]\times[2])$ \\
 $D_n$ & $n-1$, $n$ & $\J([2]\times[n-2])$ \\
 $E_6$ & $1$, $5$ & $\J^2([2]\times[3])$ \\
 $E_7$ & $6$ & $\J^3([2]\times[3])$  \\ [1ex] 
 \hline
\end{tabular}
\caption{The isomorphism types of the minuscule posets. Here we are referring to the vertex labeling of the Dynkin diagrams appearing in Figure~\ref{diagrams}.}
\label{table:1}
\end{table}

\begin{figure}
\begin{displaymath}
    \scalebox{.75}{\xymatrix{A_n & 1 \ar@{-}[r] & 2 \ar@{-}[r] & & \cdots & & n\ar@{-}[l] & & \\
    & & & & & & n \ar@{-}[dl] \\
    D_n & 1 \ar@{-}[r] & 2 \ar@{-}[r] & & \cdots & n-2 \\
    & & & 6 \ar@{-}[d] & & & n-1\ar@{-}[ul] \\
    E_6 & 1\ar@{-}[r] & 2 \ar@{-}[r] & 3 \ar@{-}[r] & 4 \ar@{-}[r] & 5 \\
   & & & 7 \ar@{-}[d] \\
    E_7 & 1\ar@{-}[r] & 2 \ar@{-}[r] & 3 \ar@{-}[r] & 4 \ar@{-}[r] & 5 \ar@{-}[r] & 6 \\}}
\end{displaymath}
\caption{The Dynkin diagrams.}
\label{diagrams}
\end{figure}
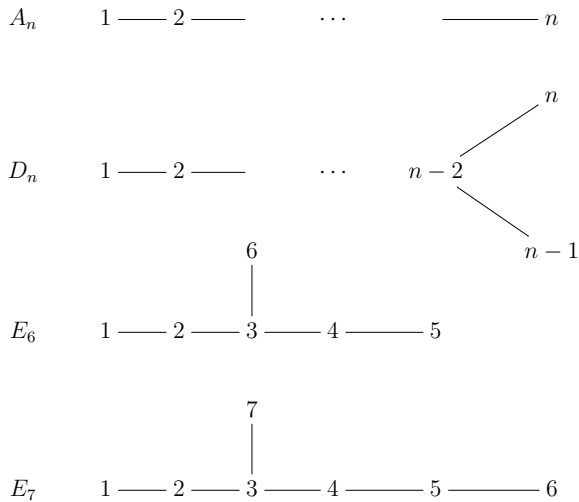

\begin{example}
In Figure~\ref{min_posets_ex}, we show some examples of minuscule posets. In these examples, we have labelled each element of the poset with its corresponding value of $\pi$.
\end{example}

\begin{figure}
$$\begin{array}{ccccccccccc} \includegraphics[scale=.75]{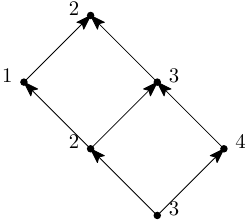} & & & \includegraphics[scale=.75]{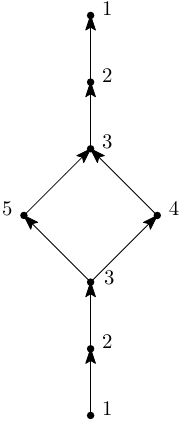} & & & \includegraphics[scale=.75]{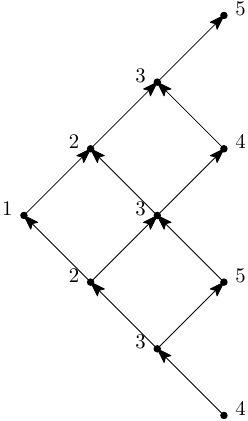}\\ (a) & & & (b) & & & (c)   \end{array}$$
\caption{The minuscule poset of  type $A_4$ with $m = 3$, of type $D_5$ with $m=1$, and type $D_5$ with $m = 4$, respectively, are shown in ($a$), ($b$), and ($c$).}\label{min_posets_ex} 
\end{figure}

Our next result shows how the minuscule posets are related to the representation theory of quivers. Given a minuscule vertex $m \in Q_0$, let $\pos$ denote the poset that is the transitive closure of the arrows in the full subquiver of the AR quiver of $Q$ whose vertex set is the set of isomorphism classes of representations supported at $m$. Now define $\pi:\pos \to Q_0$ to be the map where $\pi(\textsf{x})$ is defined as the vertex corresponding to the indecomposable projective in the same $\tau$-orbit as a representation corresponding to $\textsf{x}$.

\begin{lemma}\label{four-two}
The poset $\pos$ is isomorphic to the minuscule poset determined by the underlying graph of $Q$ and minuscule vertex $m$, and the pair $(\pos, \pi)$ is a two-neighbourly heap.
\end{lemma}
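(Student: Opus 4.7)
The plan is to verify that $(\pos, \pi)$ is a two-neighbourly heap using the structure of the Auslander--Reiten quiver of $Q$, and then identify it with the minuscule poset of $(Q, m)$ via Wildberger's classification. Since $Q$ is Dynkin, every indecomposable of $\rep Q$ lies in the $\tau$-orbit of a unique indecomposable projective, so the map $\pi$ is well-defined.

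For the heap axioms: the fibre $\pi^{-1}(j)$ is a subchain of the $\tau$-orbit of $P_j$, which is itself a chain in the AR quiver, giving (H1). For adjacent $i, j \in Q_0$, the union of the $\tau$-orbits through $P_i$ and $P_j$ forms a totally ordered strip in the AR quiver, with arrows alternating between the two orbits by the mesh relations; intersecting with $\pos$ preserves this total order, giving (H2). Axiom (H3) holds by construction, since the partial order on $\pos$ is defined as the transitive closure of the AR arrows restricted to the indecomposables supported at $m$, and these arrows are precisely the covers appearing in (H1) and (H2).

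The key step is the two-neighbourly property. Let $M$ and $\tau^{-1}M$ be consecutive elements of $\pi^{-1}(i)$, both supported at $m$, and consider the almost-split sequence
\[
0 \to M \to E \to \tau^{-1}M \to 0.
\]
Because $m$ is minuscule, $\dim M_m = \dim (\tau^{-1}M)_m = 1$, and so exactness at vertex $m$ gives $\dim E_m = 2$. By the mesh structure of Dynkin AR quivers, every indecomposable summand of $E$ lies in the $\tau$-orbit of some $P_j$ with $j$ adjacent to $i$ in $Q$; among these, the summands supported at $m$ each contribute dimension $1$ at $m$ (once again by minusculeness), so there are exactly two of them. These two summands are precisely the elements of $\pos$ strictly between $M$ and $\tau^{-1}M$, and each maps under $\pi$ to a neighbour of $i$, yielding the two-neighbourly condition.

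Finally, I will identify $\pos$ with the minuscule poset of $(Q, m)$. By Wildberger's classification (recalled at the start of this section), the minuscule posets are exactly the maximal two-neighbourly heaps over $Q$. The main obstacle is to confirm maximality of $(\pos, \pi)$: any attempted insertion of an additional element $\textsf{z}$ into the chain $\pi^{-1}(j)$ must, by the analysis of the AR sequence above, violate the two-neighbourly count between some pair of consecutive elements already present. A more concrete alternative that sidesteps the combinatorial bookkeeping of maximality is to compare $(\pos, \pi)$ directly with the explicit descriptions in Table~\ref{table:1}, using that in each Dynkin type the indecomposables of $Q$ supported at a minuscule vertex $m$ are in bijection with the positive roots of $Q$ whose coefficient at $m$ equals $1$, and this set coincides in cardinality and covering structure with the listed minuscule poset.
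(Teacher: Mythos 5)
Your argument for the two-neighbourly property contains a genuine gap: you assume that two consecutive elements of a fibre $\pi^{-1}(i)$ in $\pos$ are of the form $M$ and $\tau^{-1}M$, and then count summands of the middle term of the single almost-split sequence $0\to M\to E\to \tau^{-1}M\to 0$. But consecutive elements of $\pi^{-1}(i)$ need not be $\tau$-consecutive in the Auslander--Reiten quiver, because the support condition at $m$ is not contiguous along a $\tau$-orbit. Concretely, take $Q$ of type $D_4$ with all arrows oriented into the central vertex $c$ and let $m$ be an outer vertex, say $m=1$ (which is minuscule). Then $\tau^{-1}P_1$ has dimension vector $(0,1,1;1)$ (writing the three outer coordinates first), which is \emph{not} supported at $1$, while $\tau^{-2}P_1=S_1$ is; so the two consecutive elements of $\pi^{-1}(1)$ are $P_1$ and $\tau^{-2}P_1$. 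The two elements of $\pos$ between them lying in the adjacent fibre are $\tau^{-1}P_c$ and $\tau^{-2}P_c=I_c$, which come from two \emph{different} meshes, not from the middle term of one almost-split sequence (that middle term, $\tau^{-1}P_c$, has dimension only $1$ at $m$, since $\dim M_m+\dim(\tau^{-1}M)_m=1+0$). So the dimension count $\dim E_m=2$ on which your argument rests simply fails in this situation, and the two-neighbourly property requires a different argument.

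The final identification with the minuscule poset is also not established. The maximality needed to invoke Wildberger's classification is only asserted ("any attempted insertion \ldots must violate the two-neighbourly count"), and your fallback of comparing with Table~\ref{table:1} presupposes exactly what has to be proved, namely that the covering structure of $\pos$ matches; note moreover that $\pos$ is defined from the AR quiver of $Q$, which depends on the orientation, and your proposal never addresses why the answer is orientation-independent. The paper avoids both difficulties by a different route: it first treats the single orientation $Q'$ in which all arrows point toward $m$, where $\mathcal C_{Q',m}$ consists of the indecomposables with socle $S_m$ and the labelled poset is checked against Wildberger's explicit $X$-heaps (so two-neighbourliness is imported from \cite{wildberger2003minuscule} rather than re-proved), and then transfers the statement to an arbitrary orientation by a sequence of reflection functors at vertices other than $m$, which give an equivalence $\mathcal C_{Q',m}\to\cat$ preserving spaces of irreducible morphisms and the labelling by $\tau$-orbits. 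If you want to keep your AR-theoretic line of attack, you would need, at a minimum, a correct analysis of intervals between fibre elements that are several $\tau$-steps apart, together with a genuine maximality or orientation-independence argument.
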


\begin{proof}
First, suppose that $Q^\prime$ is the quiver whose arrows are all oriented toward $m$. The elements of $\posprime$ are exactly those corresponding to the indecomposable representations whose socle is $S_m$. One checks that $\posprime$ is isomorphic to the desired poset. 

With regards to the two-neighbourly heap assertion, one checks that the map $\pi^\prime: \posprime \to Q_0$ produces the same labeling of the elements of $\posprime$ appearing in the $X$-heap of the same type in the sense of \cite{wildberger2003minuscule}. Here $X$ is the underlying graph of $Q^\prime$. Therefore, the pair $(\posprime, \pi^\prime)$ is equivalent to the data of the corresponding $X$-heap, which was shown to be a two-neighbourly heap in \cite{wildberger2003minuscule}. 

Next, we show that the desired result holds for quiver $Q$ whose arrows are oriented in any direction. Note that there exists $i_1, \ldots, i_k \in Q_0\backslash\{m\}$ such that $Q = \sigma_{i_k}\cdots \sigma_{i_1}(Q^\prime)$. The composition of the corresponding sequence of reflection functors defines an equivalence of categories ${R}^{\pm}_{i_k}\cdots {R}^{\pm}_{i_1}: \mathcal{C}_{Q^\prime,m} \to \mathcal{C}_{Q,m}$, since for any $j \in \{1, \ldots, k\}$ none of the representations in $\mathcal{C}_{\sigma_{i_{j-1}}\cdots \sigma_{i_1}(Q^\prime),m}$ have any summands isomorphic to one of the simple representations $S_{i_{j}}, \ldots, S_{i_1}$. We know this by induction: at each step, all the indecomposable of $\mathcal{C}_{\sigma_{i_{j-1}}\cdots \sigma_{i_1}(Q^\prime),m}$ have non-zero support at vertex $m$, and reflecting at a vertex other than $m$ does not change this fact. By the notation ${R}_i^{\pm}$, we mean that we reflect at vertex $i$ and $i$ may be a source or a sink. In particular, for any $M, N \in \mathcal{C}_{Q^\prime,m}$ we have the following isomorphism of spaces of irreducible morphisms $$\text{Irr}({R}^\pm_{i_{k}}\cdots {R}^\pm_{i_1}(M), {R}^\pm_{i_{k}}\cdots {R}^\pm_{i_1}(N))\simeq \text{Irr}(M,N).$$ The poset $\pos$ is therefore isomorphic to $\posprime$, and the isomorphism respects the labelling by $\tau$-orbits.
\end{proof}

\begin{remark} There is another classic construction of minuscule posets. One puts a poset structure on the set of all the positive roots, where $\alpha\geq\beta$ if $\alpha-\beta$ is a non-negative linear combination of simple roots. The minuscule poset associated to the minuscule root $\alpha_i$ can then be described as the interval between the $\alpha_i$ and the highest root; the set of roots appearing in the interval are exactly the positive roots in whose simple root expansion $\alpha_i$ appears. See \cite[Section 8.3]{green2013combinatorics} for more details.

Interestingly, while the construction that we give also defines a bijection between the elements of the poset and this set of positive roots, the bijections are typically not the same. Indeed, the specific bijection that we obtain depends on the choice of orientation of the starting quiver.\end{remark}

The property in the next lemma can be thought of as a converse of the property of being 2-neighbourly.
\begin{lemma}\label{lem:2neighconverse}
The pair $(\pos, \pi)$ has the following property: if there exist elements $\textsf{x},\textsf{y}_1,\textsf{y}_2\in\pos$ such that $\textsf{x}<\textsf{y}_1$, $\textsf{x}<\textsf{y}_2$, and $\pi(\textsf{y}_1)$ and $\pi(\textsf{y}_2)$ are neighbours of $\pi(\textsf{x})$, then there is another element $\textsf{x}'\in\pos$ with $\textsf{x}'\geq \textsf{y}_1$, $\textsf{x}'\geq\textsf{y}_2$, and $\pi(\textsf{x}')=\pi(\textsf{x})$.
\end{lemma}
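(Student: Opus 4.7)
My approach is to use the Auslander--Reiten theoretic description of $\pos$ from the proof of Lemma~\ref{four-two}. For each $\textsf{w}\in\pos$ let $M_\textsf{w}$ denote the corresponding indecomposable representation in $\mathcal{C}_{Q,m}$, and write $i:=\pi(\textsf{x})$, so that $M_\textsf{x}$ lies in the $\tau$-orbit of $P_i$. The plan is to take $\textsf{x}'$ to be the element of $\pos$ corresponding to $\tau^{-1}M_\textsf{x}$; this choice ensures $\pi(\textsf{x}')=i$ automatically.

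The crucial input is the Auslander--Reiten mesh. Assuming first that $\textsf{y}_1$ and $\textsf{y}_2$ are covers of $\textsf{x}$ in $\pos$, the almost split sequence starting at $M_\textsf{x}$,
\[ 0 \longrightarrow M_\textsf{x} \longrightarrow \bigoplus_{\textsf{z}} M_\textsf{z} \longrightarrow \tau^{-1}M_\textsf{x} \longrightarrow 0, \]
has $M_{\textsf{y}_1}$ and $M_{\textsf{y}_2}$ among its middle summands. The mesh identifies these summands as exactly the indecomposables admitting an irreducible morphism into $\tau^{-1}M_\textsf{x}$, so $\textsf{y}_1, \textsf{y}_2 \leq \textsf{x}'$ in $\pos$, provided $\textsf{x}'\in\pos$. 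To verify the latter, I compute the dimension at $m$ using the exact sequence: $\dim(\tau^{-1}M_\textsf{x})_m = \sum_\textsf{z}\dim(M_\textsf{z})_m - \dim(M_\textsf{x})_m$. Minusculity of $m$ ensures each term is in $\{0,1\}$, while $\dim(M_{\textsf{y}_j})_m = \dim(M_\textsf{x})_m = 1$ since $\textsf{x},\textsf{y}_1,\textsf{y}_2 \in \pos$. Thus $\dim(\tau^{-1}M_\textsf{x})_m \geq 1+1-1 = 1 > 0$, so $\tau^{-1}M_\textsf{x}\in\mathcal{C}_{Q,m}$ and hence $\textsf{x}'\in\pos$. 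In particular, $M_\textsf{x}$ is automatically non-injective in this setting (it has two distinct immediate AR-successors $M_{\textsf{y}_1}, M_{\textsf{y}_2}$), so $\tau^{-1}M_\textsf{x}$ exists.

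To reduce the general case to the cover case, I use the heap axioms. Given $\textsf{x}<\textsf{y}_j$ with $\pi(\textsf{y}_j)$ adjacent to $i$, axiom (H2) places $\textsf{y}_j$ in the totally ordered union $\pi^{-1}(i)\cup\pi^{-1}(\pi(\textsf{y}_j))$, so each $\textsf{y}_j$ dominates some cover $\textsf{y}_j^-$ of $\textsf{x}$ lying beneath it in this chain. Iterating the cover-case construction along the $\tau^{-1}$-orbit of $M_\textsf{x}$ and reapplying the minusculity-based dimension check at each step keeps the construction inside $\pos$, eventually producing an $\textsf{x}'$ that dominates both $\textsf{y}_1$ and $\textsf{y}_2$. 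The principal obstacle I expect is the bookkeeping for this iteration: verifying that the sequence of intermediate $\tau^{-k}M_\textsf{x}$ stays within $\mathcal{C}_{Q,m}$ long enough for $\textsf{x}'$ to simultaneously dominate $\textsf{y}_1$ and $\textsf{y}_2$ in the full poset order (not merely in the pairwise combined orders furnished by (H2)).
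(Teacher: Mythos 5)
Your cover case is essentially sound: when $\textsf{y}_1\neq\textsf{y}_2$ are two \emph{distinct} covers of $\textsf{x}$ in $\pos$, they correspond to arrows of the AR quiver out of $M_\textsf{x}$, so $M_{\textsf{y}_1}\oplus M_{\textsf{y}_2}$ is a summand of the middle term of the mesh, and the minuscule dimension count at $m$ gives $\dim(\tau^{-1}M_\textsf{x})_m\geq 1+1-1=1$, hence $\tau^{-1}M_\textsf{x}\in\cat$ and $\textsf{y}_1,\textsf{y}_2\leq\textsf{x}'$. Two small repairs are needed even there: the argument tacitly requires $\textsf{y}_1\neq\textsf{y}_2$ (with $\textsf{y}_1=\textsf{y}_2$ the statement is simply false, e.g.\ in type $A_2$, so distinctness must be made explicit), and ``two distinct AR-successors, hence non-injective'' is not a valid inference in general, since an injective $I$ admits irreducible maps onto the summands of $I/\operatorname{soc}I$; in your setting non-injectivity instead follows from the same dimension count, because $M_{\textsf{y}_1}\oplus M_{\textsf{y}_2}$ would have to be a summand of $M_\textsf{x}/\operatorname{soc}M_\textsf{x}$, whose dimension at $m$ is at most $1$.

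The genuine gap is the reduction of the general case to the cover case, which is where the entire content of the lemma sits, and your sketch does not close it (as you yourself note). Concretely: (i) the covers of $\textsf{x}$ lying below $\textsf{y}_1$ and below $\textsf{y}_2$ may coincide, and with only one relevant middle summand the count degenerates to $\dim(\tau^{-1}M_\textsf{x})_m\geq 0$; indeed $\tau^{-1}M_\textsf{x}$ can genuinely fail to be supported at $m$, so ``iterating along the $\tau^{-1}$-orbit'' may exit $\cat$ before anything dominates $\textsf{y}_1$ and $\textsf{y}_2$. (ii) Even when the covers are distinct, one mesh step produces $\textsf{x}_1\in\pi^{-1}(i)$ dominating only the covers; by (H2) it is comparable to $\textsf{y}_1$ and $\textsf{y}_2$, but in the mixed configuration $\textsf{x}_1\geq\textsf{y}_1$, $\textsf{x}_1<\textsf{y}_2$ you are left with a single guaranteed element strictly above $\textsf{x}_1$ in an adjacent fibre, so the $1+1-1$ count cannot be reapplied, and ruling out that the fibre $\pi^{-1}(i)$ terminates below $\textsf{y}_2$ is precisely the assertion being proved. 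So the induction needs a stronger inductive statement (or a different invariant) than the one you set up; the ``bookkeeping'' you defer is not routine. For comparison, the paper's proof is a one-line maximality argument: if no such $\textsf{x}'$ existed, one could adjoin a new maximal element labelled $\pi(\textsf{x})$ and the heap would remain neighbourly, contradicting Wildberger's characterization of minuscule posets as maximal neighbourly heaps. Your AR-theoretic route is genuinely different and could in principle work, but as written it is incomplete at its crux.
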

\begin{proof}
If the described property does not hold, we may add a new maximal element $\textsf{x}'$ to $\pos$ with $\pi(\textsf{x}')=\pi(\textsf{x})$, and the resulting heap would be neighbourly. This contradicts Wildberger's result that minuscule posets are maximal neighbourly heaps.
\end{proof}

The dual version of this lemma also holds.

\begin{remark}\label{lemma_upper_int}
Let $(\P,\pi)$ be a two-neighbourly heap.  
It will be useful to note that for any interval $[\x, \y] \subset \P$, the pair $([\x,\y], \pi|_{[\x,\y]})$ is also a two-neighbourly heap.
\end{remark}

\subsection{Generic Jordan forms and reverse plane partitions}\label{sec:RSK}

\newcommand{\HX}{\mathcal{C}^{\Xi}_{Q,m}}
\newcommand{\HXp}{\mathcal{C}^{\Xi'}_{Q,m}}
\newcommand{\PX}{\textsf{P}_{Q,{m}}^\Xi}
\newcommand{\PXp}{\textsf{P}_{Q,{m}}^{\Xi'}}
\newcommand{\pix}{\pi_\Xi}
\newcommand{\JFX}{\operatorname{JF}_\Xi}
\newcommand{\JFXg}{\operatorname{GenJF}_\Xi}
\newcommand{\JFXprime}{\operatorname{JF}_{\Xi^\prime}}
\newcommand{\JFXprimeg}{\operatorname{GenJF}_{\Xi^\prime}}

Let $Q$ be a fixed Dynkin quiver with a minuscule vertex $m$. In this section, we conceptually describe a bijection between isomorphism classes of representations $M\in\cat$ and reverse plane partitions of $\pos$. In Section~\ref{sec:explicitRSK}, we describe the bijection combinatorially. A reader seeking to understand the bijection combinatorially may safely skip to Section~\ref{sec:explicitRSK}.

Let $D^b(Q)$ be the bounded derived category of $Q$. Now, let $\Xi$ be a quiver derived equivalent to $Q$, together with
an identification of $D^b(Q)$ with $D^b(\Xi)$.  We can therefore
talk about $\rep \Xi \cap \cat$, which is an additive subcategory of
both $\rep Q$ and of $\rep \Xi$.  We write $\HX$ for $\rep \Xi \cap \cat$.  
We write $\PX$ 
for the poset whose vertices are the indecomposable objects of $\HX$, with the order given by the transitive closure of the arrows.  We write
$\pix$ for the restriction to $\PX$ of the map $\pi$.  

Recall that the indecomposable objects of $D^b(Q)$ are of the form $M[i]$ for $M$ an indecomposable representation of $Q$ and $i\in \mathbb Z$. We say that $\rep\Xi$ is \emph{to the right} of $\rep Q$ if all the indecomposable objects in $\rep\Xi$ are non-negative shifts of indecomposable representations of $Q$.  

For $M\in \HX$, write $\JFXg(M)$ for the Jordan form data of $M$, thought of as a $\Xi$-representation.  We write $\JFXg(M)^i$ for the Jordan
form data at vertex $i$ of $\Xi$, where we number the vertices of 
$\Xi$ so that the indecomposable projective at vertex $i$ of $\Xi$ and the indecomposable projective at vertex $i$ of $Q$ are in the
same $\tau$-orbit. 

\begin{figure}
\begin{center}
\begin{tikzpicture}
\node at (1,0) {$\cdots$};
\node (001[-1]) at (2,1) {$001[-1]$};
\node (111) at (5,1) {$\mathbf{111}$};
\node (100[1]) at (8,1) {$100[1]$};
\node (010[1]) at (11,1) {$010[1]$};
\node (110) at (3.5,0) {$\mathbf{110}$};
\node (011) at (6.5,0) {$\mathbf{011}$};
\node (110[1]) at (9.5,0) {$110[1]$};
\node (100) at (2,-1) {$100$};
\node (010) at (5,-1) {$\mathbf{010}$};
\node (001) at (8,-1) {$001$};
\node (111[1]) at (11,-1) {$111[1]$};
\node at (12,0) {$\cdots$};
\node at (6.5,-1.5) {$(a)$};
\draw[->] (001[-1])--(110);
\draw[->] (110)--(111);
\draw[->] (111)--(011);
\draw[->] (011)--(100[1]);
\draw[->] (100[1])--(110[1]);
\draw[->] (110[1])--(010[1]);
\draw[->] (100)--(110);
\draw[->] (110)--(010);
\draw[->] (010)--(011);
\draw[->] (011)--(001);
\draw[->] (001)--(110[1]);
\draw[->] (110[1])--(111[1]);
\end{tikzpicture}\\
\vspace{.1in}
\begin{tikzpicture}
\node at (1,0) {$\cdots$};
\node (001[-1]) at (2,1) {$001[-1]$};
\node (111) at (5,1) {$\mathbf{011}$};
\node (100[1]) at (8,1) {$100$};
\node (010[1]) at (11,1) {$110[1]$};
\node (110) at (3.5,0) {$\mathbf{010}$};
\node (011) at (6.5,0) {$\mathbf{111}$};
\node (110[1]) at (9.5,0) {$010[1]$};
\node (100) at (2,-1) {$100[-1]$};
\node (010) at (5,-1) {$\mathbf{110}$};
\node (001) at (8,-1) {$001$};
\node (111[1]) at (11,-1) {$011[1]$};
\node at (12,0) {$\cdots$};
\node at (6.5,-1.5) {$(b)$};
\draw[->] (001[-1])--(110);
\draw[->] (110)--(111);
\draw[->] (111)--(011);
\draw[->] (011)--(100[1]);
\draw[->] (100[1])--(110[1]);
\draw[->] (110[1])--(010[1]);
\draw[->] (100)--(110);
\draw[->] (110)--(010);
\draw[->] (010)--(011);
\draw[->] (011)--(001);
\draw[->] (001)--(110[1]);
\draw[->] (110[1])--(111[1]);
\end{tikzpicture}\\
\vspace{.1in}
\begin{tikzpicture}
\node at (1,0) {$\cdots$};
\node (001[-1]) at (2,1) {$011[-1]$};
\node (111) at (5,1) {$\mathbf{001}$};
\node (100[1]) at (8,1) {$110$};
\node (010[1]) at (11,1) {$100[1]$};
\node (110) at (3.5,0) {$010[-1]$};
\node (011) at (6.5,0) {$\mathbf{111}$};
\node (110[1]) at (9.5,0) {$010$};
\node (100) at (2,-1) {$110[-1]$};
\node (010) at (5,-1) {$\mathbf{100}$};
\node (001) at (8,-1) {$011$};
\node (111[1]) at (11,-1) {$001[1]$};
\node at (12,0) {$\cdots$};
\node at (6.5,-1.5) {$(c)$};
\draw[->] (001[-1])--(110);
\draw[->] (110)--(111);
\draw[->] (111)--(011);
\draw[->] (011)--(100[1]);
\draw[->] (100[1])--(110[1]);
\draw[->] (110[1])--(010[1]);
\draw[->] (100)--(110);
\draw[->] (110)--(010);
\draw[->] (010)--(011);
\draw[->] (011)--(001);
\draw[->] (001)--(110[1]);
\draw[->] (110[1])--(111[1]);
\end{tikzpicture}
\end{center}
\caption{In $(a)$, $(b)$, and $(c)$, we show the AR quiver of $D^b(Q^1)$, $D^b(Q^2)$, and $D^b(Q^3)$, respectively, where $Q^1 = 1 \leftarrow 2 \leftarrow 3$, $Q^2 = 1 \rightarrow 2 \leftarrow 3$, and $Q^3 = 1 \leftarrow 2 \rightarrow 3.$ The representations in $\mathcal{C}_{Q^1,2}$, $\mathcal{C}^{Q^2}_{Q^1,2}$, and $\mathcal{C}^{Q^3}_{Q^1,2}$ are in bold.}
\label{Q_xi_figure}
\end{figure}
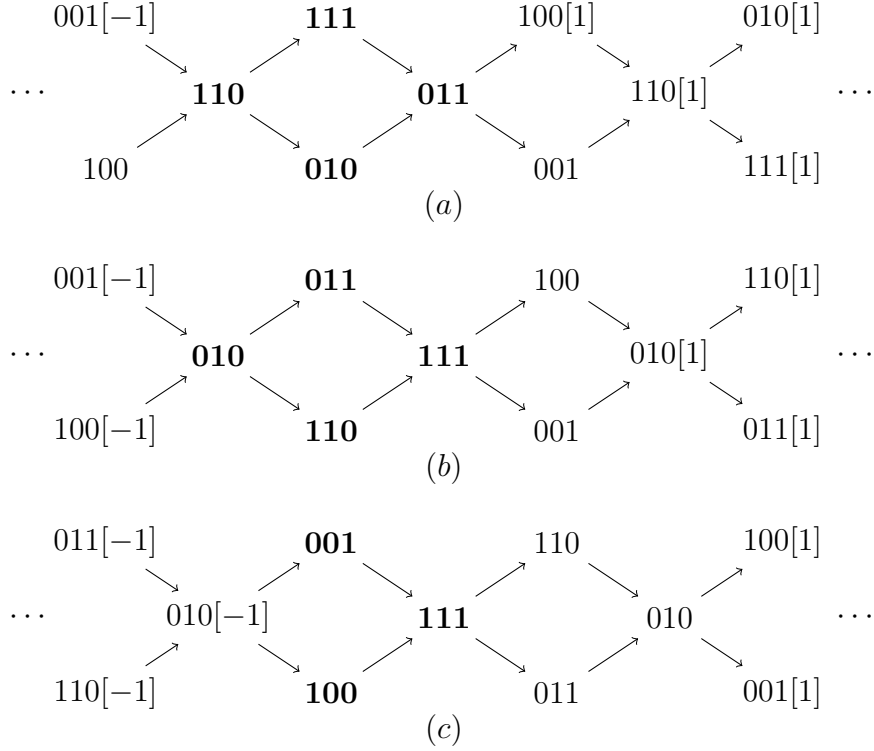

\begin{example}
Let $Q^1 = 1 \leftarrow 2 \leftarrow 3$ and let $Q^3 = 1 \leftarrow 2 \rightarrow 3$ where $D^b(Q^3)$ is identified with $D^b(Q^1)$ 
by identifying objects in the corresponding positions in Figures \ref{Q_xi_figure} (a) and (c).  We see that $\rep Q^3$ is therefore to the right of $\rep Q^1$. 
(More formally, the identification of the two derived categories is done by a composition of reflection functors acting on the derived category: ${\DR}_1^-{\DR}_2^-:D^b(Q^3) \rightarrow D^b(Q^1)$.  
See Section \ref{derived-reflection} for the definition of the reflection functors being used here.)

Choosing vertex $2$ of $Q^1$ to be our minuscule vertex, we see that the indecomposable objects of $\mathcal C_{Q^1,2}$ are $110, 111, 010, 011 \in \rep Q^1.$ In addition, the indecomposable objects of $\mathcal C_{Q^1,2}^{Q^3}$ are $111, 010, 011 \in \rep Q^1$. By the identification of $D^b(Q^3)$ with  $D^b(Q^1)$, these indecomposable objects are identified with $001$, $100$, and $111$, respectively.
\end{example}

We need the following combinatorial lemma.  

\begin{lemma} \label{comb-alt} Let $i$ be a source in 
$\Xi$, and let $\Xi'=\sigma_i(\Xi)$.  Identify $\rep \Xi'$ with 
the full subcategory of $D^b(Q)$ which has the same indecomposable objects as 
$\rep \Xi$ except for the indecomposable projective $S_i'$ of $\rep \Xi'$ and the
indecomposable injective $S_i$ of $\rep \Xi$.  Let $\rho$ be a reverse
plane partition on $\PX$.  Let $\lambda$ be the partition whose parts are the entries of $\rho$ from
$\pi^{-1}(i)$, and let $\mu$ be the partition whose parts are the
entries of $\rho$ from $\pi^{-1}(j)$ for all $j$ adjacent to $i$.  

Then we have the following two statements:
\begin{enumerate} \item If $S'_i\in\cat$, then $\lambda$ and $\mu$ are 1-interlaced.  \item If $S_i'\not\in\cat$, then $\lambda$ and $\mu$ are 0-interlaced.\end{enumerate}
\end{lemma}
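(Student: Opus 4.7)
My plan is to unpack what the $\mu$-entries look like when organized by their location in $\PX$ relative to the totally ordered fiber $\pi_\Xi^{-1}(i)$, and then to identify the key case distinction with the hypothesis on $S_i'$.

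First I would set notation: list $\pi_\Xi^{-1}(i) = \{\textsf{x}_1 < \textsf{x}_2 < \cdots < \textsf{x}_s\}$ (totally ordered by property (H1)), so that $\lambda_k = \rho(\textsf{x}_k)$ by the order-reversing property of $\rho$. By property (H2), every element $\textsf{y} \in \pi_\Xi^{-1}(j)$ for $j$ adjacent to $i$ in $\Xi$ is comparable to every $\textsf{x}_k$; such a $\textsf{y}$ lies in exactly one of the subsets
\[ T_0 = \{\textsf{y} : \textsf{y} < \textsf{x}_1\}, \qquad T_k = \{\textsf{y} : \textsf{x}_k < \textsf{y} < \textsf{x}_{k+1}\} \; (1 \le k < s), \qquad T_s = \{\textsf{y} : \textsf{y} > \textsf{x}_s\}. \]
Because $\rho$ is order-reversing, elements of $T_0$ have $\rho$-value at least $\lambda_1$, elements of $T_k$ have $\rho$-value between $\lambda_{k+1}$ and $\lambda_k$, and elements of $T_s$ have $\rho$-value at most $\lambda_s$. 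Moreover, if $\textsf{y} \in T_a$ and $\textsf{y}' \in T_b$ with $a < b$, then $\textsf{y} < \textsf{y}'$ in $\PX$ (via $\textsf{x}_{a+1} \le \textsf{x}_b$), so $\rho(\textsf{y}) \ge \rho(\textsf{y}')$. Thus the decreasing rearrangement giving $\mu$ takes $|T_0|$ entries from $T_0$, then two entries from each $T_k$, then entries from $T_s$.

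Second, I would invoke the two-neighbourly heap structure on $\PX$. By the same argument as in Lemma~\ref{four-two}, applied to the quiver $\Xi$ and the minuscule vertex $m$ transported via the derived equivalence, $\PX$ (equivalently its completion as a minuscule poset) is a two-neighbourly heap. Hence $|T_k| = 2$ for $1 \le k < s$, giving exactly two $\mu$-values strictly between each consecutive pair $\lambda_k \ge \lambda_{k+1}$.

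Third---and this is the heart of the argument---I would show
\[ |T_0| = \begin{cases} 2 & \text{if } S_i' \in \cat, \\ 1 & \text{if } S_i' \notin \cat. \end{cases} \]
The two-neighbourly property, applied in $\rep\Xi'$ (which has $i$ as a sink, with simple projective $S_i' = P_i'$ at the base of its $\tau$-orbit), says that from $S_i'$ emanate exactly two irreducible morphisms to objects in the neighbouring fibers $\pi^{-1}(j)$ for $j \sim i$. Translating to $\PX$: the two elements of $\PX$ just below $\textsf{x}_1$ in the neighbour fibers of $\rep\Xi$ arise from two irreducible morphisms in $\rep\Xi$ into the indecomposable at $\textsf{x}_1$. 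Under the identification of $\rep \Xi$ and $\rep \Xi'$, these two AR-predecessors of $\textsf{x}_1$ coincide in $\rep\Xi$ unless $S_i'$ itself occupies one of the two slots; equivalently, both predecessors survive into $\HX$ iff the slot containing $S_i'$ already lies in $\cat$. From the case analysis of Lemma~\ref{lem:2neighconverse} applied dually, this yields exactly the claim.

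Finally I would combine these observations. The inequalities $\mu_{2k-1+|T_0|} \ge \lambda_k \ge \mu_{2k + |T_0|}$ follow directly from the position of the $\mu$-entries (those from $T_0, T_1, \dots, T_{k-1}$ are $\ge \lambda_k$, those from $T_k, T_{k+1}, \dots, T_s$ are $\le \lambda_k$), and the $t$-interlaced condition with $t = |T_0| - 1 \in \{0,1\}$ is exactly this inequality. The bound $|T_s| \le 2$ (from the dual application of two-neighbourly) ensures the padding inequalities also hold, completing the proof.

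The main obstacle will be step three, the translation of the condition $S_i' \in \cat$ into the count $|T_0|$, since it requires a careful comparison of how the AR quivers of $\rep\Xi$ and $\rep\Xi'$ are glued inside $D^b(Q)$ at the exchanged simple.
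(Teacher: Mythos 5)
Your reduction is the right one, and it matches the paper's strategy in outline: sort the adjacent-fibre entries of $\rho$ by their position relative to the chain $\pi^{-1}(i)\cap \textsf{P}_{Q,m}^{\Xi}$, get exactly two entries in each internal gap from two-neighbourliness, bound what sits above the top of the chain, and read off $t+1$ as the number of entries below the bottom. But the step you yourself flag as the heart of the matter --- $|T_0|=2$ when $S_i'\in\cat$ and $|T_0|=1$ when $S_i'\notin\cat$ --- is not proved, and the sketch you give cannot be repaired as stated: when $S_i'$ lies in $\cat$ at all, it sits in the $\tau$-orbit labelled $i$, i.e.\ in the fibre $\pi^{-1}(i)$ itself, so it can never ``occupy one of the two slots'' of adjacent-fibre predecessors of $\x_1$, and ``these two AR-predecessors coincide in $\rep\Xi$'' does not translate into an argument (nor are the elements of $\textsf{P}_{Q,m}^{\Xi}$ below $\x_1$ automatically immediate AR-predecessors of $M_{\x_1}$, or even elements of $\pos$, without further argument). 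The mechanism that actually works in case (1) is that the element of $\pos$ corresponding to $S_i'$ is the element of $\pi^{-1}(i)$ immediately below $\x_1$; two-neighbourliness applied to the interval between these two fibre elements gives exactly two adjacent-fibre elements there, they lie in $\textsf{P}_{Q,m}^{\Xi}$, and upward-closedness of $\textsf{P}_{Q,m}^{\Xi}$ in $\pos$ excludes any further ones below $\x_1$. In case (2) one must instead produce \emph{exactly one} adjacent-fibre element of $\textsf{P}_{Q,m}^{\Xi}$ below $\x_1$; its existence is genuinely needed (otherwise $\mu_1\ge\lambda_1$ can fail) and is a representation-theoretic fact: the adjacent-fibre AR-predecessor of $M_{\x_1}$ lies in $\cat$ and in $\rep\Xi$ because it is a successor of $S_i'$, and uniqueness again uses Lemma~\ref{lem:2neighconverse} and two-neighbourliness measured against the fibre elements of $\pos$ that are \emph{not} in $\textsf{P}_{Q,m}^{\Xi}$. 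None of this is supplied by ``the case analysis of Lemma~\ref{lem:2neighconverse} applied dually.''

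You also omit the degenerate case $\pi^{-1}(i)\cap\textsf{P}_{Q,m}^{\Xi}=\emptyset$, which the lemma must cover (it occurs throughout the induction proving Theorem~\ref{thm_rsk}): there $T_0$ and $T_s$ are undefined and $t=|T_0|-1$ is meaningless, and the assertion becomes that at most two (case (1)), respectively at most one (case (2)), adjacent-fibre entries are nonzero; this needs its own argument --- for instance, when $S_i'\notin\cat$ one shows there are in fact no adjacent-fibre elements of $\textsf{P}_{Q,m}^{\Xi}$ at all, using Lemma~\ref{four-two} together with the observation that the unique candidate element has $S_i'$ as a successor and hence is excluded. Two smaller repairs: $\textsf{P}_{Q,m}^{\Xi}$ is in general a proper order filter of $\pos$, not the minuscule poset of $\Xi$, so ``$\textsf{P}_{Q,m}^{\Xi}$ is a two-neighbourly heap'' is not the right justification for $|T_k|=2$; what you need is that consecutive elements of $\pi^{-1}(i)\cap\textsf{P}_{Q,m}^{\Xi}$ are consecutive in $\pi^{-1}(i)$ and that the interval between them lies in $\textsf{P}_{Q,m}^{\Xi}$, both consequences of upward-closedness. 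Finally, the bound on $|T_s|$ comes from Lemma~\ref{lem:2neighconverse} (maximality of the heap), which gives $|T_s|\le 1$, not from a ``dual application of two-neighbourly,'' which says nothing about what lies above the top element of a fibre.
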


\begin{proof}
The condition that between two consecutive parts of $\lambda$ there are two parts of $\mu$ follows from the fact that $\pos$ is two-neighbourly: 
the parts of $\mu$ between two parts of $\lambda$ are taken to be those
corresponding to the two elements of $\pos$ in fibres adjacent to $i$ and in the interval between the
corresponding elements in the fibre of $i$. By Lemma \ref{lem:2neighconverse}, there is at most one part of 
$\mu$ corresponding to an element of $\pos$ above the maximum
element in $\pi^{-1}(i)$.
It remains to verify the interlacing condition involving the elements
of $\mu$ that are required to be greater than the largest part of $\lambda$.

{\it Proof of (1): $S'_i\in\cat$}.  Let $\x$ be the corresponding element of $\pos$. {Note that $\x$ is a minimal element of $\PXp$.}

If $\x$ is not the maximal element of $\pi^{-1}(i)$, then the two-neighbourly condition again implies that
there are two entries in $\mu$ greater than the largest part of $\lambda$.  This establishes that $\lambda$ and $\mu$ are 1-interlaced in this case.  

If $\x$ is the maximal element of $\pi^{-1}(i)$, then $\lambda$ is the
empty partition of 0.  There can only be at most one element of $\pi^{-1}(j)$ {in $\PXp$} with $j$ adjacent to $i$ by Lemma \ref{lem:2neighconverse}.  Since there is at most one element of 
$\pi^{-1}(j)$ with $j$ adjacent to $i$, we know that $\mu$ has at most
one non-zero element, and therefore $\lambda$ and $\mu$ 1-interlace in
this case as well.

{\it Proof of (2): $S_i'\not\in\cat$}.  We now split into three cases, 
depending on whether $\pi^{-1}(i)\cap \PX$ is:
\begin{enumerate} \item[(i)] empty,
\item[(ii)] equal to all of $\pi^{-1}(i)$, or
\item[(iii)] neither empty nor equal to all of $\pi^{-1}(i)$.
\end{enumerate}
Consider first case (i).  In this case $\lambda$ is the empty partition.
Let $\x'$ be the maximal element of $\pos\cap\pi^{-1}(i)$.  If $\x'$ is
the maximum element of $\pos$, $\mu$ is the empty partition and the
desired result holds, so assume otherwise.  
By Lemma \ref{four-two}, the indecomposable object
$M_{\x'}$ {has an immediate successor $M_{\z}$ with
$\z\in \pos$.}  On the other hand, by Lemma \ref{lem:2neighconverse}, there
is at most one element in an adjacent fibre to $\x'$ and above it in $\pos$.
Therefore $\z$ is this unique element.  But $\z$ is not in 
$\PX$, since $S_i'$, which is a successor of $\z$, is not in $\PX$. Thus $\mu$ is again the empty partition. 

In cases (ii) and (iii), let $\y$ 
be the minimum element of $\PX\cap \pi^{-1}(i)$.  
By Lemma \ref{four-two}, $\y$ is immediately preceded in the AR quiver
by 
some $\z'$ in an adjacent fibre to $i$.  Note that $\z'$ is contained
in $\PX$ since $\z'$ is a successor of $S_i'$. 
This therefore provides the remaining needed element of 
$\mu$.  We must check that there is no other element of $\mu$.  

In case (ii), let $\x'$ be the maximum element of $\pi^{-1}(i)$ which
is not in $\PX$. Again by Lemma \ref{four-two}, 
$M_{\x'}$ is directly followed in the AR quiver by some $M_{\z}$ with
$\z\in \pos$. But $\z\not\in \PX$, and $\z'$ and $\z$ are the only
two elements of adjacent fibres to $i$ between $\x$ and $\x'$ by the
two-neighbourly property.  We have therefore accounted for all the 
elements of $\mu$.  

In case (iii), there cannot be another element in a fibre adjacent to
$i$ and below $\y$, by the dual of Lemma \ref{lem:2neighconverse}.  We are 
done in this case as well.
\end{proof}

We say that the Jordan form data for a representation of $\Xi$ \emph{fits in} 
the poset $\PX$ if: \begin{itemize}
\item The number of entries of $\JFXg(M)^i$ is at most $|\pix^{-1}(i)|$.
\item If we define a filling of $\PX$ by putting the entries of 
$\JFXg(M)^i$ into $\pix^{-1}(i)$ in decreasing order going up $\PX$,
padding with zeros if necessary, then the filling defines a reverse plane
partition.
\end{itemize}
If $\JFg_\Xi(M)$ fits in the poset $\PX$, then we denote the above reverse plane partition by {$\rho_{Q,m}^{\Xi}(M)$.} When $\rep Q=\rep \Xi$ (as subcategories of $D^b(Q)$), we denote the reverse plane partition by ${\rho_{Q,m}(M)}$.

\begin{theorem}\label{thm_rsk} Suppose that $\rep \Xi$ is to the right of $\rep Q$.  
\begin{enumerate}
\item  \label{pta} Let $M\in \HX$.  Then $\JFXg(M)$ fits in $\PX$.
\item \label {ptb} The map $M \mapsto {\rho_{Q,m}^{\Xi}(M)}$ from isomorphism classes of objects in $\HX$ to 
reverse plane partitions is a bijection.
\item \label {ptc} The inverse map is given by taking the generic representation compatible with nilpotent transformations having the given Jordan form.
\end{enumerate}
\end{theorem}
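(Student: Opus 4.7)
The plan is to prove all three parts simultaneously by induction on the length of a shortest sequence of reflections at sources transforming $Q$ into $\Xi$; such a sequence exists and is finite because $\rep \Xi$ is to the right of $\rep Q$. For the base case $\Xi = Q$, the statement coincides with Theorem \ref{th-rpp}, and the plan is to prove it directly by decomposing any $M \in \cat$ as $\bigoplus_{\textsf{u} \in \pos} M_{\textsf{u}}^{a_{\textsf{u}}}$, computing $\JFg(M)$ fibre-by-fibre, and verifying that the resulting filling is order-reversing using the two-neighbourly heap structure (Lemma \ref{four-two}) together with its converse (Lemma \ref{lem:2neighconverse}). This computation will also yield the algorithm of Theorem \ref{thm_4_7_alg}, and Corollary \ref{cor:genrep} then delivers bijectivity together with the generic-representation inverse.

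For the inductive step, suppose the theorem holds for $\tilde \Xi$ and set $\Xi = \sigma_i(\tilde\Xi)$ with $i$ a source of $\tilde\Xi$; this places us in the framework of Lemma \ref{comb-alt} with $\tilde\Xi$ playing the role of the lemma's $\Xi$ and our $\Xi$ playing the role of the lemma's $\Xi'$. Given $M \in \HX$, in the case $i \neq m$ the simples $S_i^\Xi$ and $S_i^{\tilde\Xi}$ both lie outside $\cat$, so the indecomposable content of $\HX$ and $\mathcal{C}^{\tilde\Xi}_{Q,m}$ coincides and $M$ is canonically identified with some $\tilde M \in \mathcal{C}^{\tilde\Xi}_{Q,m}$. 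By the inductive hypothesis, $\JFg_{\tilde\Xi}(\tilde M)$ is a reverse plane partition $\tilde\rho$ on $\textsf{P}_{Q,m}^{\tilde\Xi}$, and Lemma \ref{comb-alt}(2) supplies the $0$-interlacing required by Theorem \ref{th-ref-nil}, giving
\[
\JFXg(M) = \refl_i\bigl(\JFg_{\tilde\Xi}(\tilde M)\bigr).
\]
The combinatorial definition of $\refl_i$ is precisely what is needed to convert $\tilde\rho$ into a reverse plane partition on $\PX$. Part (3) follows in parallel using Theorem \ref{th-ref-canon}: any reverse plane partition $\rho$ on $\PX$ gives rise to a generic representation $\GR(\rho)$, which is identified via $R^-_i$ with the inductively understood generic representation on $\textsf{P}_{Q,m}^{\tilde\Xi}$, placing $\GR(\rho)$ in $\HX$ and inverting the map from part (2).

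The case $i = m$ runs analogously but is subtler: reflecting at $m$ replaces $S_m^{\tilde\Xi}$ in $\mathcal{C}^{\tilde\Xi}_{Q,m}$ by $S_m^\Xi$ in $\HX$, so $\textsf{P}_{Q,m}^{\tilde\Xi}$ and $\PX$ differ by one element, and Lemma \ref{comb-alt}(1) supplies a $1$-interlacing in place of a $0$-interlacing. The main obstacle is verifying that $\refl_m$ together with this $1$-interlacing exactly corresponds to the addition or removal of the boundary element of $\PX$ versus $\textsf{P}_{Q,m}^{\tilde\Xi}$ and yields a valid order-reversing filling at that element; this requires careful combinatorial bookkeeping at the extremal element of $\pi^{-1}(m)$ using Lemma \ref{four-two} together with Lemma \ref{lem:2neighconverse}.
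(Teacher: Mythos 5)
Your overall strategy (induct over windows $\rep\Xi\subset D^b(Q)$, passing from one window to the next by a source reflection and using Lemma \ref{comb-alt} together with Theorems \ref{th-ref-nil} and \ref{th-ref-canon}) is the right one, but as set up the argument has two serious problems. First, the anchoring is circular. You take $\Xi=Q$ as the base case and assert that this case --- which is exactly Theorems \ref{Thm_can_jordan_recov} and \ref{th-rpp}, i.e.\ the main theorem --- can be proved ``directly'' by decomposing $M$ and computing $\JFg(M)$ fibre-by-fibre, with Corollary \ref{cor:genrep} ``delivering bijectivity.'' No such direct computation is available: there is no closed formula for $\JFg$ of an arbitrary direct sum in $\cat$ in general type (even the type $A$ description of Proposition \ref{prop:GKinv} rests on Gansner's theorem and does not by itself give order-reversingness or bijectivity), and Corollary \ref{cor:genrep} only says that the generic representation attached to a \emph{fixed} Jordan datum is well defined; it gives neither injectivity of $M\mapsto\JFg(M)$ on $\cat$ nor surjectivity onto reverse plane partitions. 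The paper avoids this by anchoring the induction at the opposite end: when $\rep\Xi$ is far enough to the right, $\rep\Xi\cap\cat=0$ and $\textsf{P}^{\Xi}_{Q,m}=\emptyset$, so the statement is vacuous, and the case $\Xi=Q$ comes out at the \emph{end} of the induction rather than going in at the start. Relatedly, your inductive step transfers the statement from $\tilde\Xi$ to $\Xi=\sigma_i(\tilde\Xi)$ with $i$ a source of $\tilde\Xi$, i.e.\ from the window further to the right to the one a step closer to $Q$ (that is the direction in which $R^-_i$, Lemma \ref{comb-alt} and Theorem \ref{th-ref-nil} operate), which is incompatible with an induction whose base case is the leftmost window $\Xi=Q$: the hypothesis and the conclusion never connect.

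Second, your case split ``$i\neq m$: both simples lie outside $\cat$'' versus ``$i=m$: the poset gains a box'' is based on a false criterion. The correct dichotomy, as in Lemma \ref{comb-alt}, is whether the new simple projective $S_i'$, viewed as an object of $D^b(Q)$, lies in $\cat$; this has nothing to do with $i=m$. Indeed, every element of $\pos$ occurs as the newly added simple projective at some step of the walk from the far right down to $\rep Q$, so the harder case occurs once for each element of $\pos$, i.e.\ in every fibre $\pi^{-1}(i)$. For $Q=1\leftarrow 2\leftarrow 3$ with $m=2$, the new simple projectives lying in $\cat$ along this walk are $011,111,010,110$, appearing at vertices $2,3,1,2$ respectively. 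Consequently the bookkeeping you defer to the ``subtler'' case $i=m$ --- stripping off the $S_i'$-summands, applying the reflection functor with the $1$-interlacing of Lemma \ref{comb-alt}(1), and then adding the multiplicity of $S_i'$ to the largest part of the Jordan datum at vertex $i$ (which uses that $S_i'$, being simple projective, maps nontrivially to every indecomposable supported at $i$) --- is needed at many vertices $i\neq m$, while your easy case is invoked in situations where it does not apply. Finally, even in the genuine easy case you do not address the point that toggling at $i$ can turn a zero part into a nonzero one, so that Lemma \ref{lem:2neighconverse} is needed to guarantee the result still fits in $\textsf{P}^{\Xi}_{Q,m}$; and the bijectivity claim in the step needs the explicit argument (toggling is a bijection, and the multiplicity of $S_i'$ is recoverable from the new entry) rather than the assertion that $\refl_i$ ``is precisely what is needed.''
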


We illustrate the arguments of the following proof in Example~\ref{ex:rskthm}.

\begin{proof}  The proof is by induction on the position of $\rep \Xi$ in
$D^b(Q)$.  If $\rep \Xi$ is far enough to the right, then $\HX$ is zero
and the poset $\PX$ is empty, and the claims are vacuously true.  

Now, suppose that the statements are true for $\Xi$, and let $i$ be a source of $\Xi$.  Let $\Xi'$ be the result of reflecting $\Xi$ at 
$i$, and consider $\rep \Xi'$ as embedded naturally in $D^b(Q)$: that is to say,
the indecomposable objects in the representation categories of $\Xi'$ and of $\Xi$ coincide, except for $S_i'$, the simple projective at $i$ of
$\rep \Xi'$ and $S_i$, the simple injective at $i$ of $ \rep \Xi$.  Suppose further
that $\rep \Xi'$ is still to the right of $\rep Q$.  This means in particular 
that $S_i$ is not in $\cat$. 

{Let $\lambda=\JFXg(M)^i$, and let $\mu$ be the partition whose parts are the collection of all the parts of $\JFXg(M)^j$ for vertices $j$ adjacent to $i$.}

Let $M'\in \HXp$.
There are now two different possibilities, depending on whether
$S'_i$ is in $\cat$.

{\it Case I: $S'_i$ is not in $\cat$.}  Let $M=R_i^+(M')$.  By 
the induction hypothesis, $\JFXg(M)$ fits in $\PX=\PXp$.  Also by 
the induction hypothesis, $M$ is the generic representation compatible with
nilpotent linear transformations with Jordan form data given by $\JFXg(M)$.

Since $S_i'$ is not in $\cat$, {by Lemma \ref{comb-alt}, $\lambda$ and $\mu$ are 0-interlaced,} so the hypotheses of Theorem \ref{th-ref-nil} hold with $t=0$. 
Thus $M'=R_i^-(M)$ has Jordan form data given 
by toggling at $i$ the filling of $\PX$ determined by $\JFXg(M)$.  Toggling can potentially change a part equal to zero of $\JFXg(M)$ into a non-zero part, but this only happens if the smallest non-zero part of $\JFXg(M)$ has two neighbouring non-zero parts above it in $\PX$.  In this case, Lemma \ref{lem:2neighconverse} applies to show that there was an actual 0 entry in $\pi^{-1}(i)$ in $\rho_{Q,m}^{\Xi}(M)$, which gives us space to fill in the new non-zero entry.   Thus $\JFg_\Xi(M')$ fits in $\PX$, but $\PXp=\PX$.  This establishes (\ref{pta}). 

(\ref{ptb}) This follows from the induction hypothesis together with the
fact that toggling is itself a bijection.  

(\ref{ptc}) This follows from Theorem \ref{th-ref-canon}.

{\it Case II: $S_i'$ is in $\cat$.} 
Let $\hat M'$ be $M'$ with any summands of $S'_i$ removed.  Let $\hat M=R^+_i(\hat M')$.  
By the induction hypothesis, $\JFXg(\hat M)$ fits in $\PX$.  Also by 
the induction hypothesis, $\hat M$ is the generic representation compatible with
nilpotent linear transformations with Jordan forms given by $\JFXg(\hat M)$.  
Since $S'_i$ is in $\cat$, {by Lemma \ref{comb-alt}, $\lambda$ and $\mu$ are 1-interlaced,} and the hypotheses of 
Theorem \ref{th-ref-nil} hold with $t=1$.
This theorem then implies that  $\JFg(\hat M)^i$ has as its largest 
part the largest of the parts of the Jordan blocks of the neighbours, and that the 
remaining parts agree with the results of 
toggling at $i$.  

We now consider the difference between 
$\JFg(\hat M')$ and $\JFg(M')$.  The difference
is confined to the partition corresponding to
vertex $i$, since $S'_i$ is only supported over
vertex $i$.  Since $S'_i$ is simple projective,
it admits a non-zero morphism to any other
indecomposable with support over $i$.  Therefore, $\JFg(M')$ is obtained from
$\JFg(\hat M')$ by adding the number of summands of $S'_i$ in $M'$ to the largest part
of $\JFg(\hat M')^i$.  We note that, compared to $\P_{Q,m}^{\Xi}$, 
$\P_{Q,m}^{\Xi'}$ contains an extra box, which accommodates the largest
part of $\JFg(M')$.  The result of toggling the entries of $\rho_{Q,m}^{\Xi}(M)$ in $\pi^{-1}(i)$ still fits in the boxes that were present
in $\P_{Q,m}^{\Xi}$, by the same argument as in Case I, using Lemma \ref{lem:2neighconverse}.
The Jordan
data for $M'$ therefore forms a reverse plane partition
for $\PX$, which establishes (1).   

For (2), we see that we can clearly recover the multiplicity of $S_i'$ from $\rho_{Q,m}^{\Xi'}(M')$,
and from
$\rho_{Q,m}^{\Xi}(\hat M)$ we can determine $\hat M$ by induction. Further, it is clear that any reverse plane partition corresponds to some representation.

(3) follows again from Theorem \ref{th-ref-canon}.\end{proof}

\begin{example}\label{ex:rskthm}
Let $Q^1 = 1 \leftarrow 2 \leftarrow 3$ with $m=2$, $Q^2 = 1 \rightarrow 2 \leftarrow 3$, and $Q^3 = 1\leftarrow 2 \rightarrow 3$ as in Figure~\ref{Q_xi_figure}. We first walk through an example of the inductive argument we use in Case II of the proof of Theorem~\ref{thm_rsk}. Here the role of $\Xi^\prime$ and $\Xi$ in the proof of Theorem~\ref{thm_rsk} are played by $Q^2$ and $Q^3$ from Figure~\ref{Q_xi_figure}, respectively. We see that $Q^3$ is to the right of $Q^2$ and $S^\prime_2 \in \rep Q^3$ is in $\mathcal{C}_{Q^1,2}$.
Let $M'\in\mathcal{C}^{Q^2}_{Q^1,2}$ be $M'=010^3\oplus 011^2\oplus 110 \oplus 111^3$. Then $\hat{M}'=011^2\oplus 110 \oplus 111^3$, and $\hat M=R^+_2(\hat M')\in\mathcal{C}^{Q^3}_{Q^1,2}$ is $\hat M=001^2\oplus 100 \oplus 111^3$. By induction, $\hat M$ corresponds to a reverse plane partition on $\textsf{P}_{Q^1,2}^{Q^3}$ determined by its Jordan form data $\JFg(\hat M)=((4),(3),(5))$. Then by Theorem~\ref{th-ref-nil}, $\JFg(\hat M')=((4),(5,1),(5))$. Following the proof, we add the multiplicity of $S_2'$ in $M'$ to the largest part of $\JFg(\hat M')^2$ to obtain $\JFg(M')=((4),(8,1),(5))$.

We now compute an example of Case I of the proof. Here the role of $\Xi^\prime$ and $\Xi$ in the proof of Theorem~\ref{thm_rsk} are played by $Q^1$ and $Q^2$ from Figure~\ref{Q_xi_figure}, respectively, where we see that $Q^2$ is to the right of $Q^1$ and $S_1^\prime\in\rep Q^2$ is not in $\mathcal{C}_{Q^1,2}$. Let 
$M_2'\in\mathcal{C}_{Q^1,2}$ be $M_2'=110\oplus 111^3\oplus 010^2\oplus 011^3$. Then $M_2=R_{1}^+(M_2')=010\oplus 011^3\oplus 110^2\oplus 111^3$ and $\JFg(M_2)=((5),(7,2),(6))$. We obtain $\JFg(M'_2)=((4),(7,2),(6))$ by toggling $\JFg(M_2)$ at vertex 1. \end{example}

\begin{proof}[Proof of {Theorems \ref{Thm_can_jordan_recov}} and~\ref{th-rpp}]
The theorems follow from Theorem~\ref{thm_rsk}, with $\rep(\Xi)$ equal to $\rep(Q)$.
\end{proof}

Observe that any order filter of $\pos$ is of the form $\PX$ for some $\Xi$.  
Therefore, the following corollary is a generalization of Corollary~\ref{thm_gen_fctn} for any order filter of $\pos$.

\begin{corollary}\label{cor_ord_filt_gn_fctn} For $Q$ a Dynkin quiver and $m$ a minuscule vertex, we have
\[\sum_{\rho\in \RPP(\PX)} \prod_{i=1}^n q_i^{|\rho_{i}|} = \sum_{X\in \HX} \prod_{i=1}^n q_i^{\textbf{dim}(X)_i}=\prod_{\textsf{u}\in \PX} \frac{1}{1- \prod_{i=1}^nq_i^{\textbf{dim}(M_\textsf{u})_i}},\]
where we write $|\rho_{i}|$ for the sum of the values $\rho(\textsf{x})$ over all $\textsf{x} \in \pi^{-1}(i)$. The second sum is over isomorphism classes of representations in $\HX$. In the third sum, $M_\textsf{u}\in\HX$ is the indecomposable representation of $\Xi$ corresponding to $\textsf{u}\in \PX$. 
 \end{corollary}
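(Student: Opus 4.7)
The plan is to mimic the proof of Corollary~\ref{thm_gen_fctn}, substituting Theorem~\ref{thm_rsk} for the role played there by Theorem~\ref{th-rpp}. The first equality is established by the bijection of Theorem~\ref{thm_rsk}(\ref{ptb}): the map $X\mapsto\rho_{Q,m}^{\Xi}(X)$ is a bijection from isomorphism classes of objects in $\HX$ to reverse plane partitions on $\PX$. To conclude the first equality of generating functions, I need to check that this bijection respects the $n$-variable weighting, i.e.\ that $|\rho_{Q,m}^\Xi(X)_i|=\textbf{dim}(X)_i$ for every vertex $i$. This is immediate from the construction given in Section~\ref{sec:RSK}: by Theorem~\ref{thm_rsk}(\ref{pta}), the entries of $\rho_{Q,m}^{\Xi}(X)$ on the fibre $\pi^{-1}(i)$ are exactly the parts of $\JFXg(X)^i$ (padded with zeros), and $\JFXg(X)^i$ is by definition a partition of $\dim X_i$.

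For the second equality, I would invoke the Krull--Schmidt property of $\rep\Xi$. Since $\HX$ is a subcategory of $\rep\Xi$ closed under direct sums and summands, every object $X\in\HX$ decomposes uniquely (up to isomorphism) as $X\simeq\bigoplus_{\textsf{u}\in\PX}M_{\textsf{u}}^{a_\textsf{u}}$ for a unique function $\textsf{u}\mapsto a_\textsf{u}\in\mathbb{N}$ of finite support. Summing over isomorphism classes in $\HX$ therefore amounts to summing over such multiplicity functions, and since $\textbf{dim}(X)=\sum_{\textsf{u}\in\PX}a_\textsf{u}\,\textbf{dim}(M_\textsf{u})$, the sum factors as
\[
\sum_{X\in\HX}\prod_{i=1}^n q_i^{\textbf{dim}(X)_i}
=\prod_{\textsf{u}\in\PX}\sum_{a_\textsf{u}\ge 0}\prod_{i=1}^n q_i^{a_\textsf{u}(\textbf{dim}\,M_\textsf{u})_i}
=\prod_{\textsf{u}\in\PX}\frac{1}{1-\prod_{i=1}^n q_i^{(\textbf{dim}\,M_\textsf{u})_i}},
\]
which is the desired product formula.

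There is essentially no obstacle: the hard work has already been done in Theorem~\ref{thm_rsk}, which sets up both the bijection and the correct weighting; the second equality is pure formal Krull--Schmidt bookkeeping. The only thing worth pointing out explicitly (and which the paper remarks on immediately before the corollary) is that every order filter of $\pos$ does indeed arise as $\PX$ for some $\Xi$ derived equivalent to $Q$, so the result applies uniformly to all order filters.
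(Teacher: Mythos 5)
Your proposal is correct and follows essentially the same route as the paper: the first equality is exactly the bijection of Theorem~\ref{thm_rsk} (with the weight-matching observation you spell out, which the paper leaves implicit), and the second is the unique decomposition of objects of $\HX$ into copies of the $M_\textsf{u}$, i.e.\ the Krull--Schmidt bookkeeping. No gaps.
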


\begin{proof}The first equality is from Theorem \ref{thm_rsk}, while the second comes from the fact that any representation in $\HX$ can be decomposed in a unique way as a sum of some number of copies of the representations $M_\textsf{u}$ for $\textsf{u}\in \PX$.
\end{proof}

\begin{remark}
 If $\Delta$ is a Dynkin diagram that is not simply-laced, there is a distinction between minuscule and cominuscule vertices of $\Delta$. If a vertex $m$ is minuscule, then there is an associated minuscule poset. This poset appears in the list previously discussed, but is equipped with a different heap structure. If $m$ is cominuscule, there is a simply-laced Dynkin diagram $\overline \Delta$ with
an automorphism $\phi$ of $\overline \Delta$, the orbits of whose vertices correspond to vertices of $\Delta$, with the orbit corresponding to $m$ being a single vertex 
$\overline m$, with $\overline m$ minuscule (and cominuscule) for $\overline \Delta$. We say that $(\Delta,m)$ unfolds to $(\overline \Delta,\overline m)$. 

The Dynkin diagram automorphism of $\overline \Delta$ extends to an action on reverse plane partitions of $(\overline \Delta,\overline m)$. Reverse plane partitions on $\P_{\overline \Delta,\overline m}$ which are fixed under $\phi$ can be identified with reverse plane partitions on $\P_{\Delta,m}$. In this way, reverse plane partitions associated to the cominuscule node of type $C_n$ correspond to reverse plane partitions of type $A_{2n-1}$ symmetric about the main diagonal. Gansner has studied these and obtained the analogue of our generating function identity in Corollary~\ref{cor_ord_filt_gn_fctn} in that setting \cite[Corollary 6.2]{gansner1981hillman}.

Note that reverse plane partitions associated 
to the 
cominuscule node of $C_n$ are the same thing as reverse 
plane partitions associated to the minuscule node of $B_n$,
and the minuscule poset for $B_n$ is isomorphic  to the minuscule poset associated to the "antennae" nodes of 
$D_{n+1}$. Thus, the study reverse plane partitions in the classical types boils down to the study of 
type $A_n$ reverse plane partitions and type $A_{2n-1}$ symmetric reverse plane partitions, together
with the near chains as in Figure \ref{min_posets_ex}(b). 
 \end{remark}


\subsection{Piecewise-linear description of Theorem~\ref{thm_rsk}(\ref{ptb})}\label{sec:explicitRSK}

We now give a more explicit description of $\rho_{Q,m}(-)$. We must first establish a linear order on the indecomposable representations of $Q$. Indeed, choose a linear order on the indecomposable representations of $Q$ compatible with the opposite of the AR quiver order. In other words, we number indecomposables $M_1,\dots, M_N$ from right to left starting with a simple injective. For $1\leq j \leq N$, let $i_j\in Q_0$ be the index of the 
indecomposable projective representation in the same $\tau$-orbit as $M_j$. It follows that $R_{i_N}^+$ can be applied to $\rep Q$, that $R_{i_{N-1}}^+$ can be applied to $\rep \sigma_{i_N} Q$, and so on. The composition of reflection functors $R_{i_1}^+\cdots R_{i_N}^+$ has the property that it takes every representation to 0. 

Conversely, every representation can be built up by adding simple projectives and applying reflection functors in the following way.  Let $M=\bigoplus_{j=1}^N M_j^{c_j}$.  Define $X_0$ to be the zero representation of $Q'=\sigma_{i_1}\dots \sigma_{i_N} (Q)$.  Now, assuming $X_j$ is defined, define $Y_{j+1}=R^-_{i_{j+1}}(X_j)$, and define $X_{j+1}=Y_{j+1}\oplus S_{i_{j+1}}^{c_{j+1}}$.  Then $X_N$ is isomorphic to $M$.

We can calculate $\rho_{Q,m}(M)$ by using this procedure, i.e., by understanding how $\rho_{Q,m}(M)$ changes under reflection and adding simple projectives, as follows.
Let $M=\bigoplus_{j=1}^N M_j^{c_j} \in \cat.$ The proof of Theorem~\ref{thm_rsk} shows that $\RSK$ is obtained by constructing a sequence of fillings of the minuscule poset $\pos$, starting with the zero filling $\rho_0$. These fillings are defined by
$$\begin{array}{ccccc}\rho_k(\textsf{x}) & := & \left\{\begin{array}{lcl} \displaystyle \max_{\textsf{x}\lessdot \textsf{y}}\rho_{k-1}(\textsf{y}) + c_k & : & \text{if $\textsf{x}$ is the element of 
$\pos$ corresponding to $M_k$,} \\
({t}_\textsf{x}\rho_{k-1})(\textsf{x}) & : & \text{if $\textsf{x}$ corresponds to  $\tau^\ell(M_k)$ for some $\ell < 0$, and}\\
\rho_{k-1}(\textsf{x}) & : & \text{otherwise,}
\end{array}\right.\end{array}$$ 
where $\textsf{x}$ is any element of $\pos$. We obtain the following theorem.

\begin{theorem}\label{thm_4_7_alg}
For any $M \in \cat,$ we have that $\RSK = \rho_N.$
\end{theorem}

Note that using this description of the algorithm, the intermediate fillings $\rho_k$ for $k<N$ are not reverse plane partitions of $\pos$. However, by restricting $\rho_k$ to the elements of $\pos$ corresponding to $M_1,\ldots,M_k$, we do obtain a reverse plane partition on the induced subposet of $\pos$ whose elements correspond to $M_1,\ldots,M_k$.

Observe that in the process of constructing $\rho_N$, we never toggle at a minimal element of one of these induced posets. Therefore, the result of the procedure does not depend on whether we think of the entries as being in 
$[0,N]$ for any $N$ sufficiently large, or as being in $\NN$. In either case, the entries will always consist of non-negative integers.
See Figures~\ref{fig:HGexample} and \ref{fig:Pakexample} for examples in type $A$ worked out step-by-step using this explicit description. 

\section{Periodicity}\label{sec:period}
In this section, we study reverse plane partitions on minuscule posets filled with elements of $\NN$ and show that these encode the Jordan form data of certain objects in a quotient of the derived category called the root category. We then use these results to prove that promotion on minuscule posets is periodic with period given by the Coxeter number of the associated Weyl group. 

Throughout Section \ref{sec:period}, 
we assume that $Q$ is a Dynkin quiver with
a chosen minuscule vertex $m$ and with the vertices of $Q$ numbered in
such a way that arrows go from lower-numbered to higher-numbered vertices.

\subsection{Reflection functors in the derived category}\label{derived-reflection}

Let $Q$ be a quiver, and let $k$ be a source or sink of $Q$.  There are reflection
functors that provide an equivalence between the derived categories of $\rep Q$ and $\rep \sigma_k(Q)$.  They are closely related to the reflection functors defined previously on categories of representations. We recall the definition of these reflection functors now.

Let $k$ be a sink of $Q$ and $M$ an indecomposable representation, define $\DR^+_k: D^b(Q) \to D^b(\sigma_k(Q))$ by 
$$\DR^+_k(M[i]) := \begin{cases} 
(R^+_kM)[i] & \textrm { if } M \not \simeq S_k\\ 
S_k[i-1] & \textrm{ if } M\simeq S_k.\end{cases} $$ Similarly, let $k$ be a source of $Q$ and $M$ an indecomposable representation, define $\DR^-_k: D^b(Q) \to {D}^b(\sigma_k(Q))$ by 
$$\DR^-_k(M[i]) := \begin{cases} ({R}^-_kM)[i] & \textrm{ if } M \not \simeq S_k,\\
S_k[i+1] & \textrm{ if } M\simeq S_k. \end{cases}$$ 

We will study the behaviour of the \textit{Coxeter functor}, which is defined as $\text{cox} := \widetilde{{R}}^-_{n}\cdots \widetilde{{R}}^-_{1}$. {We recall the following well-known lemma.}

\begin{lemma}\label{lemma_cox_equals_tau} For $Q$ Dynkin and 
$M\in D^b(Q)$, the inverse Auslander--Reiten translation of $M$ is
isomorphic to 
$\text{cox}(M)$.
\end{lemma}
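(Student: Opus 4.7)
The plan is to reduce to indecomposable objects of $D^b(Q)$ and invoke the classical Bernstein--Gelfand--Ponomarev (BGP) reflection theorem in its derived form. I would first verify that $\text{cox}$ is well-defined: our labeling convention (arrows go from lower-numbered to higher-numbered vertices) ensures vertex $i$ is a source of $\sigma_{i-1}\cdots\sigma_1(Q)$, since any arrow incident to $i$ in this intermediate quiver is either an arrow to a higher-numbered vertex (not yet touched by any reflection) or an arrow originally $j\to i$ with $j<i$ that has been reversed by $\sigma_j$. Because the composition $\sigma_n\cdots\sigma_1$ reverses each arrow of $Q$ exactly twice, $\sigma_n\cdots\sigma_1(Q)=Q$, and $\text{cox}$ is a triangulated autoequivalence of $D^b(Q)$.

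Every indecomposable of $D^b(Q)$ has the form $M[n]$ with $M\in\rep Q$ indecomposable and $n\in\mathbb{Z}$, and both $\text{cox}$ and $\tau^{-1}$ commute with $[1]$, so it suffices to verify the isomorphism on $M\in\rep Q$ indecomposable. For non-injective $M$, I would argue that no stage of the composition acts with a shift: a shift at stage $i$ would require the then-current incarnation of $M$ to be the simple $S_i$ in $\sigma_{i-1}\cdots\sigma_1(Q)$, and unwinding the earlier (shift-free) reflections---each of which restricts to an equivalence on the complement of the reflected simple---would exhibit $M$ as the indecomposable injective $I_i$ of $\rep Q$, contradicting our assumption. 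Consequently $\text{cox}(M)$ agrees with the module-level Coxeter functor $R^-_n\cdots R^-_1(M)$, which equals $\tau^{-1}(M)$ by the classical BGP theorem.

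For $M=I_i$ indecomposable injective, I would trace the reflections explicitly: the earlier reflections $\widetilde{R}^-_{i-1}\cdots\widetilde{R}^-_1$ (shift-free by the same argument applied to the first $i-1$ stages) transform $I_i$ into the indecomposable injective at $i$ in $\sigma_{i-1}\cdots\sigma_1(Q)$, which equals $S_i$ since $i$ is a source with no arrows in; then $\widetilde{R}^-_i$ yields $S_i[1]$, and the remaining reflections $\widetilde{R}^-_n\cdots\widetilde{R}^-_{i+1}$ act on this shifted object and convert it into $P_i[1]$ viewed in $D^b(Q)$. Since $\tau^{-1}(I_i)=P_i[1]$ in $D^b(Q)$ (by the Serre-functor formula $\tau=\nu\circ[-1]$ together with $\nu^{-1}(I_i)=P_i$), the isomorphism follows. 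The main obstacle is the bookkeeping in the injective case; this can be shortcut by observing that $\text{cox}$ and $\tau^{-1}$ are triangulated autoequivalences agreeing on a tilting generator (the direct sum of the indecomposable projectives, most of which are handled by the non-injective argument), and appealing to the standard reconstruction principle for derived autoequivalences from Happel's work.
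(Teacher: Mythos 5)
Your argument is correct, and it is built on the same classical engine as the paper's proof (the module-level relation between the composite of reflection functors and the Auslander--Reiten translation), but it handles the delicate point differently. The paper simply quotes Gabriel's formula $\tau\simeq R^+_1\cdots R^+_n$ and then disposes of the objects where module-level and derived-level $\tau$ could disagree by passing to a heart of $D^b(Q)$ in which the given indecomposable is not projective; you instead keep the heart fixed and do a case analysis: for a non-injective indecomposable module you show no stage of $\widetilde R^-_n\cdots\widetilde R^-_1$ produces a shift (your unwinding argument, showing a shift at stage $i$ would force $M\simeq I_i$, is sound), so $\cox$ agrees with the module Coxeter functor and classical BGP finishes; for $M=I_i$ you track the derived functors explicitly to get $\cox(I_i)\simeq P_i[1]\simeq\tau^{-1}I_i$, using that reflection at a source $k\neq i$ sends $I_i$ to the injective at $i$ of the reflected quiver (a standard fact, easily checked on dimension vectors in Dynkin type). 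Your route is longer but more self-contained about the interaction between the derived reflection functors and the shift, which the paper leaves implicit. One caveat: the closing ``shortcut'' is not sound as stated --- two triangulated autoequivalences that agree objectwise on a tilting generator need not agree objectwise everywhere (one needs agreement as functors, i.e.\ on morphisms, to invoke Happel-style reconstruction), and in Dynkin type some projectives are injective, so they would not be covered by the non-injective case anyway. Since you carry out the injective bookkeeping explicitly, this optional remark does not affect the validity of the proof; I would just drop it.
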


\begin{proof}
It is well known that for Dynkin quivers the Auslander--Reiten translation $\tau$ may be written $\tau = R^+_{1}\cdots R^+_{n}$ (See, for instance, 
\cite[Proposition 5.4]{gabriel1980auslander}.  In general, as explained by Gabriel, there is an issue of signs in the maps defining the representation, but since $Q$ is Dynkin, this does not affect the representations up to isomorphism.)

We assume that $M$ is indecomposable and that $Q$ is not the
quiver with a single vertex. Then there is a category of representations of a quiver derived equivalent to $\rep Q$ in which $M$ is not projective, so the Auslander--Reiten translation of $M$ agrees with the Auslander--Reiten translation in the derived category.\end{proof}

\subsection{The root category}
Let $\mathcal{D} = D^b(Q)$ where $Q$ is an acyclic quiver, and let $F: \mathcal{D}\to \mathcal{D}$ be a triangle functor.  We assume that $F$ also satisfies the following:
\begin{enumerate}
\item[1)] For each indecomposable representation $V$ of $Q$, only a finite number of the objects $F^nV$, with $n \in \mathbb{Z}$, are indecomposable representations of $Q$, and 
\item[2)] there is some $N \in \mathbb{N}$ such that the set
\[
\{V[n] \mid V \text{ an indecomposable representation of $Q$}, n \in [-N,N]\}
\]
contains a set of representatives of the orbits of $F$ on the indecomposable objects of $\mathcal{D}$.
\end{enumerate}
We define the \textit{orbit category} $\mathcal{D}/F$ to be the category whose objects are the objects of $\mathcal{D}$ 
and whose morphisms from ${X}$ to ${Y}$ are {given by}
\[
\bigoplus_{n \in \mathbb{Z}}\text{Hom}_\mathcal{D}(X, F^nY).
\] 
Keller proved that the category $\mathcal{D}/F$ is triangulated and that the projection functor $p: \mathcal{D} \to \mathcal{D}/F$ is triangulated \cite[Theorem 1]{keller2005triangulated}. Furthermore, the shift functor in $\mathcal{D}/F$ is induced by the shift functor in $\mathcal{D}$. We therefore denote both by $[1]$.

Now, we return to the case when $Q$ is a Dynkin quiver and $m$ is a minuscule vertex of $Q$. We define the orbit category $\mathcal R_Q = D^b(Q)/[2]$. Observe that the triangle functor $[2]$ satisfies the two properties stated in the previous paragraph, and {the category $\mathcal R_Q$} is therefore triangulated by Keller's theorem. 

The definition of morphisms in $\mathcal{R}_Q$ and the fact that $\rep Q$ is hereditary imply that any indecomposable object $X \in D^b(Q)$ is isomorphic to $X[2n]$ for any $n\in \mathbb{Z}$. 

The Grothendieck group $K_0(\mathcal R_Q)$ is isomorphic to $\mathbb Z^n$; the classes of the simple objects in $\rep Q$ form a basis for it.  The map sending objects in $\mathcal R_Q$ to their classes in the Grothendieck group defines a bijection from the indecomposable objects to the roots in the root system corresponding to $Q$.  For this reason, $\mathcal R_Q$ is referred to as the \textit{root category}.  The positive roots correspond to the indecomposables in $\rep Q$, and the negative roots to the indecomposables in $\rep Q[1]$.

Next, the reflection functors defined on $D^b(Q)$ are well-defined on objects of $\mathcal{R}_Q$. This follows from the fact that $\widetilde{R}^+_i(X[2]) \simeq (\widetilde{R}^+_i(X))[2]$ in {$D^b(Q)$} 
for any $X \in {D^b(Q)}$, which is easily verified. On the level of the Grothendieck group, reflection functors act like simple reflections in the Weyl group corresponding to the root system.

We now consider the action of $\text{cox}$ on $\mathcal{R}_Q$. Let $h$ denote the \textit{Coxeter number} of $Q$: the order of the product of all of the simple reflections in the corresponding Coxeter group, {taken in any order}.

\newcommand{\cox}{\textrm{cox}}

\begin{lemma}\label{order-cox}
For any object $M\in\mathcal R_Q$, we have that $\cox^h(M)\simeq M$.  Conversely, if $M$ is indecomposable, and $0<i<h$, then $\cox^i(M)\not\simeq M$.  \end{lemma}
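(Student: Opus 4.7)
The plan is to transfer the question from the root category to the root system via the Grothendieck group, and then use the classical properties of the Coxeter element.

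First I would observe that, by the previous discussion, the indecomposable objects of $\mathcal R_Q$ are in bijection with the roots of the root system of $Q$ via the class map $M \mapsto [M] \in K_0(\mathcal R_Q)$. I would then show that the Coxeter functor $\cox = \widetilde R_n^-\cdots\widetilde R_1^-$ descends on $K_0(\mathcal R_Q)$ to the Coxeter element $c = s_n\cdots s_1$ of the Weyl group: each reflection functor $\widetilde R_i^-$ induces the simple reflection $s_i$ on dimension vectors, since $\refl_i(\textbf{d})$ is literally the formula for the simple reflection applied to $\textbf{d}$, and this carries over to all of $K_0(\mathcal R_Q)$ because $\rep Q$ and $\rep Q[1]$ together give representatives of all classes and $\widetilde R_i^-$ is compatible with $[1]$.

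Next, for any indecomposable $M$ with corresponding root $\alpha$, the image $\cox^k(M)$ is again indecomposable (since $\cox$ is an autoequivalence of $\mathcal R_Q$), and its class in $K_0$ is $c^k \alpha$. Since the Coxeter element has order $h$ by definition of the Coxeter number, $c^h \alpha = \alpha$, and the bijection between indecomposables and roots forces $\cox^h(M) \simeq M$. Taking direct sums, $\cox^h$ is the identity on isomorphism classes in $\mathcal R_Q$. This proves the first assertion.

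For the converse, suppose $M$ is indecomposable with corresponding root $\alpha$, and suppose $\cox^i(M) \simeq M$ for some $0 < i < h$. Then $c^i \alpha = \alpha$. The key classical fact I would invoke is that the Coxeter element of an irreducible finite Weyl group acts on the root system freely, with all orbits of size exactly $h$; equivalently, $1$ is not an eigenvalue of $c^i$ for $0 < i < h$, because the eigenvalues of $c$ on the reflection representation are $e^{2\pi \sqrt{-1}\, m_j/h}$ for the exponents $m_j$, all of which satisfy $1 \le m_j \le h-1$ and $\gcd(m_j,h) = 1$ in the sense needed (more directly, since $\mathbf{d} \mapsto c^i \mathbf{d}$ has no fixed roots for $0<i<h$). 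Thus no root is fixed, contradicting $c^i\alpha = \alpha$ and giving $\cox^i(M) \not\simeq M$ for $0 < i < h$.

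The main obstacle will be the step passing from representation theory to combinatorics of roots cleanly — specifically, verifying that $\cox$ really does act as a Coxeter element on $K_0(\mathcal R_Q)$ (which requires checking the identifications of reflection functors with simple reflections on classes, including on shifted objects in $\mathcal R_Q$), and citing the free action of the Coxeter element on roots correctly. The latter is standard (Bourbaki, Ch.~V, \S6) but is the nontrivial input that drives the whole argument.
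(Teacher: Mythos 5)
Your proposal is correct and takes essentially the same route as the paper: $\cox$ acts on $K_0(\mathcal R_Q)$ as the Coxeter element $s_n\cdots s_1$, whose order $h$ combined with the bijection between indecomposable objects and roots yields $\cox^h(M)\simeq M$, and the converse is exactly the cited Bourbaki fact that all orbits of a Coxeter element on the roots have size $h$. One caution: your parenthetical eigenvalue justification is not quite right in general (the exponents need not be coprime to $h$, e.g.\ in type $D_4$, so $c^i$ can have eigenvalue $1$ for some $0<i<h$), but the free-action-on-roots statement you also invoke is the correct input and is precisely what the paper uses.
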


\begin{proof}
The functor $\cox$ acts on the Grothendieck group of $\mathcal R_Q$ by
$s_n\dots s_1$.  {Since the order of $s_n\dots s_1$ as an element of $W$ is $h$, it} follows that
$\cox^h$ sends an indecomposable object $M$ to an indecomposable object with the same class in the Grothendieck group as $M$, but such an object is necessarily isomorphic to $M$.  This 
establishes the first claim for indecomposable objects, and thus for all objects.  

The second claim follows from the fact that orbits in the set of roots under
the action of a Coxeter element are all of size $h$, see \cite[Exercise V.6.1]{Bo}. \end{proof}

\subsection{Reverse plane partitions for objects in the root category}

There is an automorphism of the Dynkin diagram induced by the action of the
longest element of the Weyl group.  We denote it $\psi$.  Concretely, it
is the obvious symmetry of the Dynkin diagram in types $A_n$, $D_n$ with $n$
odd, and $E_6$.  Otherwise it is the identity.  This symmetry plays an important r\^{o}le both in minuscule posets and in representation theory.

\begin{lemma} \label{ant-lemma}
There is a unique antiautomorphism of $\Ant:\pos\rightarrow \pos$ such that
$\pi(\Ant(\x))=\psi(\pi(\x))$. (It is easy to see how $\Ant$ is defined on the minuscule posets appearing in Figure~\ref{min_posets_ex}.)  \end{lemma}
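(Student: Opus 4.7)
The plan is to prove uniqueness and existence separately.

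For uniqueness, suppose $\Ant_1,\Ant_2:\pos\to\pos$ are two antiautomorphisms satisfying $\pi\circ\Ant_i=\psi\circ\pi$. Set $\sigma:=\Ant_2\circ\Ant_1^{-1}$, which is a poset automorphism of $\pos$. From $\pi\circ\Ant_1=\psi\circ\pi$ we deduce $\pi\circ\Ant_1^{-1}=\psi^{-1}\circ\pi$, so $\pi\circ\sigma=\psi\circ\psi^{-1}\circ\pi=\pi$. Thus $\sigma$ preserves each fibre $\pi^{-1}(i)$ setwise. By heap axiom (H1), each such fibre is a totally ordered subposet of $\pos$, and the only order-preserving self-bijection of a finite chain is the identity. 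Hence $\sigma=\operatorname{id}$ and $\Ant_1=\Ant_2$.

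For existence, I will use the representation-theoretic description of $\pos$ from Lemma~\ref{four-two}: the elements of $\pos$ are the isomorphism classes of indecomposable representations of $Q$ supported at $m$, and $\pi(\x)$ records the vertex of the indecomposable projective in the $\tau$-orbit of $M_\x$. For Dynkin $Q$, each $\tau$-orbit starts at some $P_i$ and terminates at $I_{\psi(i)}$, where the Nakayama permutation coincides with the diagram automorphism $\psi$ coming from $w_0$. A key preliminary is the cardinality identity $|\pi^{-1}(i)|=|\pi^{-1}(\psi(i))|$ for all $i$; this follows from the fact that $-w_0$ is an involution on positive roots that interchanges the sets $\{\alpha:c_m(\alpha)=1,\,\alpha\text{ in the orbit of }P_i\}$ and $\{\alpha:c_{\psi(m)}(\alpha)=1,\,\alpha\text{ in the orbit of }P_{\psi(i)}\}$, and one then invokes the fact that the minuscule poset shape depends only on $\{m,\psi(m)\}$. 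Writing each fibre as a chain $x_0^{(i)}<x_1^{(i)}<\cdots<x^{(i)}_{n_i-1}$, I then set
\[
\Ant\!\left(x_k^{(i)}\right) \;:=\; x^{(\psi(i))}_{n_i-1-k}.
\]
By construction $\Ant$ is an involutive bijection satisfying $\pi(\Ant(\x))=\psi(\pi(\x))$.

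The main obstacle is showing that $\Ant$ so defined is order-reversing on $\pos$ (not merely on each fibre, where the statement is trivial). This amounts to a $\psi$-equivariant ``mirror'' symmetry of the portion of the AR quiver lying in $\pos$, interchanging inter-fibre covering relations between $\pi^{-1}(i),\pi^{-1}(j)$ and $\pi^{-1}(\psi(i)),\pi^{-1}(\psi(j))$ in the order-reversed way. I will verify this by direct inspection using Proctor's classification in Table~\ref{table:1}: in each case $\pos$ is a familiar distributive lattice (the rectangle $[k]\times[n+1-k]$ in type $A_n$, or $J^{\ell}(R)$ for small $R$ in the other types), each of which carries a canonical $180^\circ$-rotation antiautomorphism that coincides with the $\Ant$ defined above; combining this with the explicit formula for $\pi$ in each type (e.g.\ $\pi(a,b)=a+(n+1-k)-b$ in the linearly-oriented type $A_n$ case) confirms both the order-reversal and the $\psi$-compatibility by a short computation.
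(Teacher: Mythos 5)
Your uniqueness argument is correct and is essentially the paper's (each fibre is a chain, so an automorphism over the identity of $Q_0$, or an order-reversing bijection between two chains, is forced). The problem is with existence, which is where the real content of the lemma lies. You define $\Ant$ fibre-by-fibre and then concede that the ``main obstacle'' is showing it reverses the order globally and satisfies $\pi(\Ant(\x))=\psi(\pi(\x))$ --- but you do not actually carry this out: Table~\ref{table:1} records only the abstract isomorphism type of $\pos$, not the heap labelling $\pi$, and you supply a candidate formula for $\pi$ only in type $A_n$ (and only for one orientation, though Lemma~\ref{four-two} does make the labelled heap orientation-independent). For the two $D_n$ families and for $E_6$, $E_7$ nothing is checked, and the assertion that each poset ``carries a canonical $180^\circ$-rotation antiautomorphism'' compatible with $\pi$ and $\psi$ is precisely the statement to be proved, not something that can be cited. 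A case-by-case proof via the classification is a legitimate strategy, but as written the decisive verification is deferred rather than done.

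The preliminary cardinality step is also shaky: $-w_0$ sends $\{\alpha>0: c_m(\alpha)=1\}$ to $\{\alpha>0: c_{\psi(m)}(\alpha)=1\}$, and when $\psi(m)\neq m$ the latter roots do not index elements of $\pos$ at all; appealing to ``the minuscule poset shape depends only on $\{m,\psi(m)\}$'' to fix this is essentially assuming the symmetry you are trying to establish, and the claim that $-w_0$ matches up $\tau$-orbits of $P_i$ and $P_{\psi(i)}$ is asserted, not proved. For comparison, the paper avoids all of this uniformly: the heap axioms together with maximality, neighbourliness and two-neighbourliness are self-dual, so $(\pos^{\op},\psi\circ\pi)$ is again a maximal neighbourly two-neighbourly heap over the same diagram; by Wildberger's classification it must be $(\pos,\pi)$ itself, and the resulting isomorphism $(\pos,\pi)\cong(\pos^{\op},\psi\circ\pi)$ is exactly $\Ant$ (with equality of fibre sizes falling out for free). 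If you want to keep your route, you would need to write down $\pi$ explicitly in every case of Table~\ref{table:1} and do the checks, or replace the $-w_0$ discussion with an argument along the lines of the paper's duality/classification step.
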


\begin{proof} Clearly there is at most one such map, since it must send 
$\pi^{-1}(i)$ to $\pi^{-1}(\psi(i))$ while reversing the order.

Write $\pos^\op$ for the dual poset of $\pos$, and define $\pi^\op(\x)=\psi(\pi(x))$.
Now $(\pos^\op,\pi^\op)$ is again a maximal neighbourly heap that is also two neighbourly, so by the classification, it is the two-neighbourly heap corresponding to some minuscule vertex of a Dynkin diagram, and it clearly must be $(\pos,\pi)$.
The isomorphism between $(\pos,\pi)$ and $(\pos^\op,\pi^\op)$ defines 
$\Ant$.
\end{proof}

\begin{lemma} {The modules} $P_i$ and $I_{\psi(i)}$ are in the same $\tau$-orbit in
$\rep Q$.\end{lemma}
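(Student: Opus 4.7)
My approach is to use the Serre (Nakayama) functor. Since $\rep Q$ is hereditary, the Serre functor $\nu$ on $D^b(Q)$ satisfies $\nu \simeq \tau \circ [1]$. Combining this with the standard identity $\nu(P_i)\simeq I_i$, I would get
\[
\tau(P_i) \simeq I_i[-1]
\]
in $D^b(Q)$, equivalently $\tau^{-1}(I_j) \simeq P_j[1]$.

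First, I would use this to define a permutation $\phi$ of the vertex set of $Q$. Since $Q$ is Dynkin, the $\tau$-orbit of $P_i$ in $\rep Q$ is a finite sequence $P_i, \tau^{-1}P_i, \ldots, \tau^{-k_i}P_i$ ending at some injective $I_{\phi(i)}$: iterating $\tau^{-1}$ from $P_i$, one stays in $\rep Q$ until reaching some injective, after which a further application of $\tau^{-1}$ yields $P_{\phi(i)}[1]$, which lies in $\rep Q[1]$.

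Next, I would check that $\phi$ is an involution. By Lemma~\ref{order-cox}, $\tau^h$ is the identity on indecomposable objects of $\mathcal{R}_Q = D^b(Q)/[2]$. Tracing the $\tau^{-1}$-orbit of $P_i$ through the derived category,
\[
P_i \to \cdots \to I_{\phi(i)} \to P_{\phi(i)}[1] \to \cdots \to I_{\phi^2(i)}[1] \to P_{\phi^2(i)}[2],
\]
and using $[2] = \mathrm{id}$ in $\mathcal{R}_Q$, the final term equals $P_{\phi^2(i)}$. Equating this with $P_i$ (by the periodicity of $\tau$ on $\mathcal{R}_Q$) forces $\phi^2 = \mathrm{id}$.

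The main obstacle will be identifying $\phi$ with $\psi$. I would pass to the Grothendieck group $K_0(D^b(Q))$, where $\tau^{-1}$ acts as the Coxeter transformation $c$, of order $h$. The orbit of $[\dim P_i]$ under $c$ consists of $k_i+1$ positive roots (namely $[\dim\tau^{-j}P_i]$ for $j=0,\ldots,k_i$), followed by $k_{\phi(i)}+1$ negative roots corresponding to objects in $\rep Q[1]$. The claim $\phi(i) = \psi(i)$ then amounts to showing $c^{k_i}[\dim P_i] = [\dim I_{\psi(i)}]$. This follows from the classical identity $c^{h/2} = w_0$ (valid when $h$ is even) combined with $w_0\alpha_j = -\alpha_{\psi(j)}$; for the remaining case of odd $h$ (only type $A_{2k}$), an analogous direct argument applies. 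Alternatively, $\phi = \psi$ can be verified case-by-case in each Dynkin type $A_n, D_n, E_6, E_7$ via explicit AR-quiver computation --- for instance, using interval modules in type $A$.
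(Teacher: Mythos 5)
Your setup is fine as far as it goes: $\nu\simeq\tau[1]$ and $\nu P_i\simeq I_i$ give $\tau^{-1}I_j\simeq P_j[1]$, so the $\tau$-orbit of $P_i$ inside $\rep Q$ ends at some injective $I_{\phi(i)}$, and the lemma is exactly the identification $\phi=\psi$. That identification is where your argument has a genuine gap. The ``classical identity'' $c^{h/2}=w_0$ is not valid for the Coxeter element you are forced to use, namely the Coxeter transformation by which $\tau^{-1}$ acts on $K_0$ for the given orientation of $Q$: that identity is a feature of \emph{bipartite} Coxeter elements, and it holds for arbitrary Coxeter elements only when $w_0=-1$, i.e.\ precisely in the types where $\psi=\mathrm{id}$ and there is nothing to identify. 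For example, for $A_3$ with the linear orientation one has $c=s_3s_2s_1=(1\,4\,3\,2)$ in $S_4$, so $c^{h/2}=c^{2}=(1\,3)(2\,4)\neq(1\,4)(2\,3)=w_0$. Moreover, even where $c^{h/2}=w_0$ does hold, the step ``hence $c^{k_i}[\underline{\dim}\,P_i]=[\underline{\dim}\,I_{\psi(i)}]$'' is not a routine consequence: $w_0(\underline{\dim}\,P_i)=-\psi(\underline{\dim}\,P_i)$, and $\psi(\underline{\dim}\,P_i)$ is the dimension vector of a projective over the \emph{reoriented} quiver $\psi(Q)$, which in general differs from $\underline{\dim}\,P_{\psi(i)}$ and from $\underline{\dim}\,I_{\psi(i)}$ over $Q$; nothing in your sketch determines $k_i$ or shows that the first negative root in the $c$-orbit of $\underline{\dim}\,P_i$ is $-\underline{\dim}\,P_{\psi(i)}$ --- but locating that crossing is exactly the content of the lemma. (The side argument that $\phi^2=\mathrm{id}$ has a similar gap: it tacitly assumes the orbit in $\mathcal R_Q$ closes after one pass through $\rep Q$ and $\rep Q[1]$, i.e.\ that $k_i+k_{\phi(i)}+2=h$, which is not established; fortunately it is not needed for the lemma.)

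For comparison, the paper gets the missing information by choosing a reduced word for $w_0$ \emph{adapted to the orientation of $Q$} and applying the corresponding derived reflection functors: this carries $P_i$ within its $\tau$-orbit to a shifted projective, and the Grothendieck-group computation with $w_0$ (together with $w_0\alpha_j=-\alpha_{\psi(j)}$) then identifies which one; the adaptedness of the word is what ties the abstract $w_0$-statement to the particular orientation, which is the step your Coxeter-element argument cannot supply. Your fallback of a case-by-case AR-quiver verification could in principle succeed, but it is not carried out here, and it would have to treat all orientations in each type (or be reduced to a single orientation, which again requires a reflection-functor argument of the paper's kind).
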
\label{inj-proj}
\begin{proof} There is a sequence of reflection functors corresponding to a factorization of the longest element $w_0$ in the Weyl group and with the property that they
send $\rep Q$ to $\rep Q[1]$.  This sends $P_i$ to some  projective object $X$ of $\rep Q[1]$ in the same $\tau$-orbit as $P_i$. Since reflection functors act on the Grothendieck group by simple reflections, $[X]=w_0([P_i])$, implying that $X\simeq P_{\psi(i)}[1]$.  Since $\tau P_{\psi(i)}[1]
\simeq I_{\psi(i)}$, the desired result follows.\end{proof}

Let $\Xi$ be a reorientation of $Q$.  Then $\mathcal R_\Xi$ is
equivalent to $\mathcal R_Q$.  We fix an equivalence.  The indecomposable
objects of $\mathcal R_Q$ are thereby divided into two: those in $\mathcal R_\Xi^\even$ and those in $\mathcal R_\Xi^\odd$.
  
Let $X\in \cat$.  It is thereby divided as $X_\Xi^\even \oplus X_\Xi^\odd$.
As we discussed in the introduction, if we start from a nilpotent endomorphism of $X$, it induces a nilpotent endomorphism of $H^i(X)$ with respect to the $t$-structure induced by $\Xi$, i.e., of  
$X_\Xi^\even$ and $X_\Xi^\odd$. We then consider the Jordan data associated to $\JFg(X_\Xi^\even)$ and $\JFg(X_\Xi^\odd)$.  We will insert
$\JFg(X_\Xi^\even)$ into an order filter in $\pos$, which we denote $\PP_\Xi^\even$, exactly as we did in Section
\ref{sec:RSK}.  We will insert $\JFg(X_\Xi^\odd)$ into the complementary order ideal,
denoted $\PP_\Xi^\odd$,
after replacing each Jordan block size $i$ by $\infty - i$.  

We have to define $\PP_\Xi^\even$ and $\PP_\Xi^\odd$.  There is something slightly 
confusing which happens.  

$\cat$ is divided in two, into $\mathcal C_\Xi^\even$ and $\mathcal C_\Xi^\odd$. The AR quiver of the bounded derived category,
restricted to $\cat$, is acyclic.  As usual, we think of the arrows
as going from left to right.  
There are two possibilities: the elements of $\mathcal C_\Xi^\even$ are to the right of
the elements of $\mathcal C_\Xi^\odd$, or vice versa.  In the former case, 
the elements of $\mathcal C_\Xi^\even$ form an order ideal in $\PP$ (thought of as the
AR quiver of $\mathcal R_Q$ restricted to $\cat$), and we simply define 
 $\PP_\Xi^\even$ to be that order ideal.  In this case, we similarly define 
 $\PP_\Xi^\odd$ to be the complementary order filter, which consists of the elements of 
 $\PP$ which correspond to objects from $\mathcal C_\Xi^\odd$. 

However, in the case that the elements of $\mathcal C_\Xi^\even$ are to the left of the
elements of $\mathcal C_\Xi^\odd$, something slightly unexpected happens.  $\mathcal C_\Xi^\even$ defines an order ideal of $\PP$, while we want $\PP_\Xi^\even$ to be
an order filter.  We therefore define $\PP_\Xi^\even=\Ant(\mathcal C_\Xi^\even)$ and
$\PP_\Xi^\odd=\Ant(\mathcal C_\Xi^\odd)$.

The following proposition follows quite directly from Theorem \ref{thm_rsk}.

\begin{proposition}\label{derived-fits} 
$\JFg(X_\Xi^\even)$ fits into $\PP_\Xi^\even$ and $\JFg(X_\Xi^\odd)$ fits into
$\PP_\Xi^\odd$ after replacing each $i$ by $\infty - i$.    
\end{proposition}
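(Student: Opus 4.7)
My plan is to reduce the claim to Theorem~\ref{thm_rsk}, applied separately to $X_\Xi^\even$ and $X_\Xi^\odd$ viewed as representations of $\Xi$. First I would lift the fixed root-category equivalence to a derived equivalence $F \colon D^b(Q) \to D^b(\Xi)$ with $\rep \Xi$ placed to the right of $\rep Q$. Under this lift, the indecomposable summands of $\cat$ split into two pieces: those whose $F$-image lies in $\rep \Xi$ (which form $\mathcal C_\Xi^\even$, an order filter of $\pos$) and those whose $F$-image lies in $\rep \Xi[-1]$ (which form $\mathcal C_\Xi^\odd$, the complementary order ideal). This is Case A in the definition of $\PP_\Xi^\even$ and $\PP_\Xi^\odd$; Case B is handled symmetrically by using the other natural lift (differing by a shift) and composing with the antiautomorphism $\Ant$ of Lemma~\ref{ant-lemma}.

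For the even part, $X_\Xi^\even$ is an object of $\HX = \rep \Xi \cap \cat$, so Theorem~\ref{thm_rsk} applies directly and gives that $\JFXg(X_\Xi^\even)$ fits in $\PX$. In Case A one has $\PX = \mathcal C_\Xi^\even = \PP_\Xi^\even$, settling the first half.

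For the odd part, the summands of $X_\Xi^\odd$ as $\Xi$-representations are the objects $F(M)[1]$ for $M \in \mathcal C_\Xi^\odd$, obtained by shifting $F(M) \in \rep \Xi[-1]$ up by one. I would verify, by a Grothendieck-group computation using the minuscule condition that the $m$-coordinate of $[M]$ equals $1$ and the fact that $F$ acts on $K_0$ by a Weyl element, that each $F(M)[1]$ is indecomposable and supported at $m$, so lies in $\mathcal C_{\Xi,m}$. The resulting indecomposables span an order filter $\mathcal{F} \subseteq \textsf{P}_{\Xi,m}$ which, under the canonical isomorphism of minuscule posets $\textsf{P}_{\Xi,m} \simeq \pos$ followed by $\Ant$, is identified with the order ideal $\mathcal C_\Xi^\odd = \PP_\Xi^\odd$ of $\pos$. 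Applying Theorem~\ref{thm_rsk} with $\Xi$ in place of $Q$ (and the trivial reference) yields that $\JFXg(X_\Xi^\odd)$ fits in $\mathcal{F}$ as a reverse plane partition. Transporting this filling back through $\Ant$ reverses the partial order, so that after the substitution $j \mapsto \infty - j$ (which reverses the total order on values) it becomes an order-reversing filling of $\PP_\Xi^\odd$, i.e., a reverse plane partition with entries in $\NN$. The two fillings glue to a reverse plane partition on all of $\pos$ because every entry of the form $\infty - j$ is strictly larger than every finite non-negative integer.

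The hard part will be the verification that each $F(M)[1]$ genuinely lies in $\mathcal C_{\Xi,m}$ and that the subposet they span is identified with $\Ant(\mathcal C_\Xi^\odd)$ under the canonical isomorphism of minuscule posets. This relies on the combinatorics of minuscule posets together with Lemma~\ref{inj-proj}, which ties the injective-projective $\tau$-orbit structure to the diagram automorphism $\psi$ underlying $\Ant$, and on tracking the effect of shifting by $[1]$ in the root category on the $\tau$-orbit labelling.
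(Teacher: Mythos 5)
Your handling of the even part in the case you call Case A coincides with the paper's proof: lift the equivalence so that $\rep\Xi$ is to the right of $\rep Q$ and apply Theorem~\ref{thm_rsk} directly. The genuine gap is in the odd part. Theorem~\ref{thm_rsk} applied with $\Xi$ as the base quiver and ``the trivial reference'' only says that $\JFg(X_\Xi^\odd)$ fits in the \emph{full} minuscule poset $\textsf{P}_{\Xi,m}$; it does not say it fits in the order filter $\mathcal F$ spanned by the summands of $X_\Xi^\odd$, and the reverse plane partition it produces is in general not supported on $\mathcal F$, so there is nothing to restrict and transport through $\Ant$. Concretely, in the running example $Q=1\rightarrow 2\leftarrow 3$ with $m=2$, take $M=111^d$: its summands span the filter consisting of the single maximal element of the poset, yet $\JFg(M)=((d),(d),(d))$ and $\rho_{Q,2}(M)$ places the entry $d$ at the three non-maximal elements and $0$ at the top. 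If you instead invoke Theorem~\ref{thm_rsk} with a nontrivial reference quiver that carves out $\mathcal F$, its conclusion concerns Jordan data computed with respect to that other quiver, not with respect to $\Xi$, which is what the proposition requires. What is actually needed for the odd part is the \emph{dual} fitting statement---the parts must increase going up the order ideal, so that the entries $\infty-i$ are order-reversing---and the paper gets this in one stroke by passing to the dual algebra: $\fk$-duality turns $X_\Xi^\odd$ into a module over $\Xi^{\op}$ with the same Jordan data, Theorem~\ref{thm_rsk} applies on that side, and dualizing the poset (which exchanges the fibres $\pi^{-1}(i)$ and $\pi^{-1}(\psi(i))$; cf.\ Lemmas~\ref{ant-lemma} and~\ref{inj-proj}) produces exactly the $\infty-i$ statement. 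Your proposal never uses duality, and the $\Ant$-transport you substitute for it both rests on the unjustified ``fits in $\mathcal F$'' step and omits the fibre bookkeeping (vertex-$i$ data would land over $\psi(i)$).

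A second problem: dismissing your Case B as ``symmetric, using the other natural lift'' is too quick. Changing the lift of $\rep\Xi$ by a shift does not change which summands of $X$ are even and which are odd---that is fixed by the chosen equivalence $\mathcal R_\Xi\simeq\mathcal R_Q$---so in that case $\mathcal C_\Xi^\even$ genuinely sits to the left and Theorem~\ref{thm_rsk} does not apply as stated. The paper handles this by rerunning the induction behind Theorem~\ref{thm_rsk} with $\Xi$ starting to the right of $Q$ and moving left, where the simple added at each reflection is injective rather than projective and hence is $S_{\psi(i)}$ by Lemma~\ref{inj-proj}; this is precisely why $\PP_\Xi^\even$ is defined through $\Ant$ in that case. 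Some argument of this kind (or again a duality argument) is required; it is not a formal symmetry.
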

  
\begin{proof}  We consider first the case that $\mathcal C_\Xi^\even$ is to the right
of $\mathcal C_\Xi^\odd$.  The fact that $\JFg(X_\Xi^\even)$ fits into $\PP_\Xi^\even$
  is a direct application of Theorem \ref{thm_rsk}.  The claim for the odd
  part follows by considering the dual algebra.

  Now consider the case that $\mathcal C_\Xi^\odd$ is to the right of $\mathcal C_\Xi^\even$.
  To analyze this, we consider the setup of Theorem \ref{thm_rsk} and imagine that $\Xi$ started to the left of $Q$ instead of to the right of $Q$.  A very similar argument applies.  The important difference is that when we reflect $\Xi$ at $i$, adding a new simple module, that module is injective rather than
  projective.  By Lemma \ref{inj-proj}, it is therefore the module $S_{\psi(i)}$. This is accounted for by the fact that $\Ant$ swaps the fibres 
  $\pi^{-1}(i)$ and $\pi^{-1}(\psi(i))$.

  Again, the odd part follows in the same way as the even part by considering the dual algebra. \end{proof}

By Proposition \ref{derived-fits}, given $\Xi$ and given $X\in \cat$, we can fit the Jordan form data for $X_\Xi^\odd$ and $X_\Xi^\even$ into $\pos$.  If we interpret $\infty - i$ as greater than $j$ for any natural numbers $i$ and $j$, the 
result is a reverse plane partition with entries in $\NN$.  
We denote this reverse plane partition by 
  $\rho_{Q,m}^{\Xi}(X)$.  

We now establish the converse mentioned in the introduction, Proposition \ref{intro-enough-X}, which says that for any reverse plane partition $\rho$ with entries in $\NN$, there exists a choice of $\Xi$ and $X$ such that $\rho=\rho_{Q,m}^\Xi(X).$

\begin{proof}[Proof of Proposition \ref{intro-enough-X}] Define $\sf P^\even$ to be the order filter in $\pos$ where the value of $\rho$ is in $\mathbb N$, and define $\sf P^\odd$ to be the complementary order ideal.
Choose $\rep\Xi\subset \mathcal R_Q$ so that $\rep\Xi\cap \cat$ corresponds to the elements of $\sf P^\even$.  Theorem \ref{thm_rsk} now says that there is a bijection between isomorphism classes of objects in
$\rep \Xi\cap \cat$ and reverse plane partitions on $\sf P^\even$.  Choose $X^\even$ to be
an object in $\rep\Xi\cap \cat$ corresponding to the reverse plane partition $\rho|_{\sf P^\even}$.  Similarly, choose $X^\odd[1] \in \rep\Xi[1]\cap \cat$ so that $\JFg(X^\odd)$, when inserted into $\sf P^\odd$ as in Proposition \ref{derived-fits}, yields $\rho|_{\sf P^\odd}$.  Let $X=X^\even\oplus X^\odd[1]$.  It follows that $\rho=\rho_{Q,m}^\Xi(X)$.\end{proof}

\subsection{Reflection functors and toggling in the derived category}\label{refl_functors_and_toggling_D}

We continue the same setup as in the previous section: we are considering $\mathcal R_Q$, and $\cat$ as a subcategory of it, with $X$ an object in $\cat$.  We have also fixed $\Xi$ a reorientation of $Q$ and an identification of $\rep \Xi$ as a subcategory of $\mathcal R_Q$.

Suppose that $i$ is a source
of $\Xi$, and let $\Xi'=\sigma_i(\Xi)$.   We identify
  $\rep(\Xi')$ as the subcategory of $\mathcal R_Q$ which coincides with
  $\rep(\Xi)$ except for the simple injective $S_i$ in $\rep(\Xi)$ and the
  simple projective $S'_i$ in $\rep(\Xi')$.  

We now prove Theorem \ref{tog-ref}, which asserts that $\rho_{Q,m}^{\Xi'}(X)=t_i \rho_{Q,m}^{\Xi}(X)$.

\begin{proof}[Proof of Theorem \ref{tog-ref}]
We divide into three cases, depending on whether $S_i$ is in $\cat$, $S_i'$ is in $\cat$, or neither.  (Note that since $S_i=S'_i[1]$, it is not possible for both
  $S_i$ and $S'_i$ to be in $\cat$.)

{\it Case I: Neither $S_i$ nor $S_i'$ is in $\cat$.}  
In this case, $\P^\even_\Xi$ and $\P^\even_{\Xi'}$ agree.  Since neither
$S_i$ nor $S_i'$ is in $\cat$, we have that $X^\even_{\Xi'}\simeq R^-_i(X^\even_{\Xi})$.  As shown in the proof of Theorem \ref{thm_rsk}, 
$\rho_{Q,m}^{\Xi'}(X^\even_{\Xi'})=t_i\rho_{Q,m}^\Xi(X^\even_{\Xi})$.
The same statements also apply to the odd parts.  

The further observation that is needed is that in this case the action of $t_i$ on $\rho_{Q,m}^\Xi(X)$ agrees
with the action of $t_i$ on each of the two parts.  This follows from Case II in the proof of Lemma \ref{comb-alt}.  

{\it Case II: $S_i'$ is in $\cat$.}  This hypothesis implies that 
$\mathcal C^\even_{\Xi}$ is to the right of $\mathcal C^\odd_{\Xi}$,
and similarly for $\Xi'$.  Let $p$ be the multiplicity of $S'_i$ in
$X$.  

{Observe that the poset} $\P^\even_{\Xi'}$ contains {exactly} one more element than $\P^\even_{\Xi}$.
Let this element be $\x$.  

{We have that $X^\even_{\Xi'}\simeq R^-_i(X^\even_{\Xi}) \oplus (S_i')^p$.}  
As shown in the proof of Theorem \ref{thm_rsk}, $\rho_{Q,m}^{\Xi'}(X^\even_{\Xi'})(\z) = (t_i\rho_{Q,m}^\Xi(X^\even_\Xi))(\z)$ for all
$\z\in \P_{\Xi'}^\even$ with $\z\ne \x$.  The same argument shows that 
$\rho_{Q,m}^{\Xi'}(X^\odd_{\Xi'})(\z) = (t_i\rho_{Q,m}^\Xi(X^\odd_\Xi))(\z)$ for all 
$\z \in \P_{\Xi'}^\odd$, and it is clear that the action of $t_i$ on
$\rho_{Q,m}^\Xi(X)$ agrees with its action on the two parts 
separately except perhaps at $\x$.  It therefore only remains to 
establish the result at $\x$.  

Consider the inductive procedure which calculates $\rho_{Q,m}^\Xi(X^\odd_\Xi)$ (as in the proof of Theorem \ref{thm_rsk}).  From that perspective, the element $\x$ corresponds to the entry of the 
reverse plane partition which has been added last.  We conclude that 
$$\rho_{Q,m}^\Xi(X)(\x)=
p+\max_{\y\lessdot \x} \rho_{Q,m}^{\Xi}(X)(\y).$$  
It follows that 
$$(t_i\rho_{Q,m}^\Xi(X))(\x)= p+\min_{\z\gtrdot \x}\rho_{Q,m}^{\Xi}(X)(\z).$$
This agrees with the results of the inductive procedure which 
calculates $$\rho_{Q,m}^{\Xi'}(X)(\x)=\rho_{Q,m}^{\Xi'}(X^\even_{\Xi'})(\x).$$  The result is established.

{\it Case III: $S_i$ is in $\cat$.} The result in this case follows by the same argument as Case II.
\end{proof}

\begin{example} Let $Q$ be the quiver
$1 \leftarrow 2 \leftarrow 3$, and let $m=2$.
Suppose we start with the representation 
$X=  110^4 \oplus 010^3 \oplus 111^2 \oplus 011$ in $\cat$.
Let $\Xi=Q$, and identify the representations of 
$\Xi$ with the representations of $Q$ contained in
$\mathcal R_Q$. We calculate $\rho_{Q,m}^\Xi(X)$ to obtain the first reverse plane partition below (where the rows of the reverse plane partition are numbered from top to bottom). 
Successively toggling at vertices 3, 2, and 1 of $Q$, we obtain the succeeding sequence of reverse plane partitions. 

$$ \begin{tikzpicture}
\node (a) at (0,0) {8};
\node (b) at (1,1) {6};
\node (c) at (1,-1) {3};
\node (d) at (2,0) {2};
\draw[->] (a) -- (b);
\draw[->] (a) -- (c);
\draw[->] (b)--(d);
\draw[->] (c)--(d);
\end{tikzpicture}
\qquad \begin{tikzpicture}
\node (a) at (0,0) {8};
\node (b) at (1,1) {6};
\node (c) at (1,-1) {7};
\node (d) at (2,0) {2};
\draw[->] (a) -- (b);
\draw[->] (a) -- (c);
\draw[->] (b)--(d);
\draw[->] (c)--(d);
\end{tikzpicture} 
\qquad \begin{tikzpicture}
\node (a) at (0,0) {$\infty-1$};
\node (b) at (1,1) {6};
\node (c) at (1,-1) {7};
\node (d) at (2,0) {4};
\draw[->] (a) -- (b);
\draw[->] (a) -- (c);
\draw[->] (b)--(d);
\draw[->] (c)--(d);
\end{tikzpicture}
\qquad 
\begin{tikzpicture}
\node (a) at (0,0) {$\infty-1$};
\node (b) at (1,1) {$\infty-3$};
\node (c) at (1,-1) {7};
\node (d) at (2,0) {4};
\draw[->] (a) -- (b);
\draw[->] (a) -- (c);
\draw[->] (b)--(d);
\draw[->] (c)--(d);
\end{tikzpicture}
$$
To interpret these representation-theoretically,
we define $\Xi'$, $\Xi''$, and $\Xi'''$ by successive reflections. Below, we draw the AR 
quiver of $\rep Q$ (thought of as a full subcategory of $\mathcal R_Q$), but we label the vertices by their dimension vectors in, successively,
$\rep \Xi$, $\rep \Xi'$, $\rep \Xi''$, and $\rep \Xi'''$.
$$\begin{tikzpicture}
\node (a) at (0,0) {100};
\node (b) at (1,-1) {110};
\node (c) at (2,-2) {111};
\node (d) at (2,0) {010};
\node (e) at (3,-1) {011};
\node (f) at (4,0) {001};
\draw[->] (a)--(b);
\draw[->] (b)--(c);
\draw[->] (b)--(d);
\draw[->] (d)--(e);
\draw[->] (c)--(e);
\draw[->] (e)--(f);
\end{tikzpicture}
\qquad
\begin{tikzpicture}
\node (a) at (0,0) {100};
\node (b) at (1,-1) {111};
\node (c) at (2,-2) {110};
\node (d) at (2,0) {011};
\node (e) at (3,-1) {010};
\node (f) at (4,0) {$001[1]$};
\draw[->] (a)--(b);
\draw[->] (b)--(c);
\draw[->] (b)--(d);
\draw[->] (d)--(e);
\draw[->] (c)--(e);
\draw[->] (e)--(f);
\end{tikzpicture}$$
$$\begin{tikzpicture}
\node (a) at (0,0) {110};
\node (b) at (1,-1) {111};
\node (c) at (2,-2) {100};
\node (d) at (2,0) {001};
\node (e) at (3,-1) {$010[1]$};
\node (f) at (4,0) {$011[1]$};
\draw[->] (a)--(b);
\draw[->] (b)--(c);
\draw[->] (b)--(d);
\draw[->] (d)--(e);
\draw[->] (c)--(e);
\draw[->] (e)--(f);
\end{tikzpicture}
\qquad
\begin{tikzpicture}
\node (a) at (0,0) {010};
\node (b) at (1,-1) {011};
\node (c) at (2,-2) {$100[1]$};
\node (d) at (2,0) {001};
\node (e) at (3,-1) {$110[1]$};
\node (f) at (4,0) {$111[1]$};
\draw[->] (a)--(b);
\draw[->] (b)--(c);
\draw[->] (b)--(d);
\draw[->] (d)--(e);
\draw[->] (c)--(e);
\draw[->] (e)--(f);
\end{tikzpicture}
$$

Since $\cat$ is contained in
$\rep \Xi'$, to obtain the second reverse plane partition directly, we simply calculate the Jordan form of a generic nilpotent endomorphism of $X$ when viewed as a representation in 
$\rep \Xi'$. Explicitly, we calculate the Jordan form of a generic nilpotent endomorphism of
$X_{\Xi'}=111^4 \oplus 011^3 \oplus 110^2 \oplus 010$.

For the third reverse plane partition, we must consider $X_{\Xi''}=111^4 \oplus 001^3 \oplus 100^2 \oplus 010[1]$. We separately calculate
the Jordan form of generic nilpotent endomorphisms of 
$X_{\Xi''}^\even=111^4 \oplus 001^3 \oplus 100^2$
and $X_{\Xi''}^\odd=010$. The result obtained for the odd part (1, in row 2) is entered into the reverse plane partition as $\infty -1$. 

Finally, for the fourth reverse plane partition, we have 
$X_{\Xi'''}^\even=011^4 \oplus 001^3$, while 
$X_{\Xi'''}^\odd=100^2 \oplus 110$. We separately calculate the Jordan form of their generic nilpotent endomorphisms and enter them into the reverse plane partition, with the entries (3,1) for $X_{\Xi'''}^\odd$ entered as 
$\infty -3 $ and $\infty-1$.
\end{example}

As explained in the introduction, we can successively apply the previous theorem at vertex 1, vertex 2, \ldots, vertex $n$.  The effect of the successive toggles is just, by definition, promotion, i.e.,  
$$t_n\dots t_1 \rho^\Xi_{Q,m}(X)=
\pro_Q\rho^\Xi_{Q,m}(X).$$
On the representation-theoretic side, the successive changes to the quiver amount to replacing $\rep \Xi$ by $\DR_n\dots\DR_1\rep\Xi$, and by Lemma \ref{lemma_cox_equals_tau}, 
$\DR_n\dots\DR_1\rep\Xi=\tau \rep \Xi$.  
We conclude (Corollary \ref{pro_-1}) that 
$\rho_{Q,m}^{\tau\Xi}(X) = \pro_Q\rho_{Q,m}^{\Xi}(X)$, where we write $\rho_{Q,m}^{\tau\Xi}$ for the reverse plane partitions associated to the decomposition of $\mathcal R_Q$ as 
$\tau \rep \Xi \oplus \tau \rep \Xi[1]$.   

We now prove Theorem \ref{thm_proh_is_Id} from the introduction, which asserts that the order of $\pro_Q$ on reverse plane partitions for $\pos$ is $h$.

\begin{proof}[Proof of Theorem \ref{thm_proh_is_Id}]
By Theorem~\ref{tog-ref}, for $X\in \cat$, we have $\pro\rho^\Xi_{Q,m}(X)=\rho^{\tau\Xi}_{Q,m}(X)$. Lemma \ref{order-cox} tells us that for $M$ in $\mathcal R_Q$, we have $\tau^{-h}(M)\simeq M$.  It follows 
that $\pro_Q^h \rho_{Q,m}(X)=\rho_{Q,m}(X)$.  In order to see that the order cannot be less than $h$, consider for example $X=S_m$.  In order for $\pro_Q^k \rho_{Q,m}(X)=\rho_{Q,m}(X)$, we see that $X$
must be the simple at vertex $m$ in $\tau^{-k}\rep Q$, or in other words that $\tau^k S_m$ must be isomorphic to $S_m$.  Again invoking Lemma \ref{order-cox}, we see that $k$ must be a multiple of $h$.  This proves the theorem.  \end{proof}

\subsection{Proof of Theorem~\ref{intro-period-N}}\label{sec:periodicity_results}
In this section, we establish Theorem \ref{intro-period-N}, which shows that promotion also has period dividing $h$ on reverse plane partitions with entries in $[0,N]$.

Let $\rho$ be a reverse plane partition of $\pos$ with entries in $\NN$. We say that $N\in\mathbb{N}$ is \textit{close enough to infinity} for $\rho$ if replacing all instances of $\infty$ with $N$ in $\rho$ yields a reverse plane partition of $\pos$ with entries in $[0,N]$. In this case, denote the resulting reverse plane partition by $\rho|_{\infty\to N}$.

\begin{lemma}\label{lem:inftytoN}
If $N$ is close enough to infinity for $\rho$, then $N$ is also close enough to infinity for $t_{\textsf{x}} \rho$, and $(t_{\textsf{x}} \rho)|_{\infty \to N} = t_{\textsf{x}}(\rho|_{\infty\to N})$.
\end{lemma}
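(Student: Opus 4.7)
The plan is to establish the pointwise arithmetic identity $(t_\x \rho)|_{\infty \to N} = t_\x(\rho|_{\infty \to N})$ first, and then deduce that $N$ is close enough to infinity for $t_\x \rho$ as a formal consequence. At any $\y \neq \x$, both sides equal $\rho(\y)|_{\infty \to N}$ directly from the definition of the toggle, so all the content is concentrated at $\y = \x$.

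For that value, I would write $M := \max_{\x \lessdot \y_1}\rho(\y_1)$ and $m := \min_{\y_2 \lessdot \x}\rho(\y_2)$, with the conventions $M = 0$ when $\x$ is maximal and $m = \infty$ when $\x$ is minimal, together with the analogous $M'$, $m'$ computed from $\rho|_{\infty \to N}$ (using $m' = N$ at a minimal $\x$). Since the substitution $\infty \to N$ is order-preserving on $\NN$ and carries the $\NN$-version of the extremal convention ($\infty$) to the $[0,N]$-version ($N$), max and min commute with the substitution, so $M' = M|_{\infty \to N}$ and $m' = m|_{\infty \to N}$. The core of the argument is then a short case analysis according to whether each of $M$, $\rho(\x)$, $m$ is finite or has the form $\infty - k$. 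The reverse plane partition inequalities $M \leq \rho(\x) \leq m$ eliminate many combinations: if $\rho(\x)$ is finite then so is $M$, and if $\rho(\x) = \infty - k_\rho$ then $m = \infty - k_m$ with $k_m \leq k_\rho$. In each surviving case the expression $M + m - \rho(\x)$ has at most one ``unmatched'' $\infty$ remaining after the matched ones cancel via these very inequalities, so evaluating the formula and then substituting produces the same value as substituting first and then evaluating.

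The first assertion then follows formally: the right-hand side $t_\x(\rho|_{\infty \to N})$ is a reverse plane partition with values in $[0,N]$, because $t_\x$ preserves both the reverse plane partition property and the bound $[0,N]$ (as noted just after the definition of the toggle). Hence the left-hand side $(t_\x \rho)|_{\infty \to N}$ has the same property, which is exactly the statement that $N$ is close enough to infinity for $t_\x \rho$. The main obstacle is purely organizational: the toggle formula is affine-linear in its three inputs, and the coefficient of $\infty$ in the combination $M + m - \rho(\x)$ always telescopes to $0$ or $1$ under the reverse plane partition constraints, so no conceptual difficulty arises beyond checking the finite list of cases.
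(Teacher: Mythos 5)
Your overall route is the same as the paper's: away from $\textsf{x}$ nothing changes; at $\textsf{x}$ one verifies the identity by a case analysis according to which of the three quantities $M$, $\rho(\textsf{x})$, $m$ lies in the ``infinite'' part of $\NN$ (your reductions --- $\rho(\textsf{x})$ finite forces $M$ finite, $\rho(\textsf{x})=\infty-k$ forces $m=\infty-k'$ with $k'\le k$ --- and the resulting cancellation of the $\infty$'s match the computation the paper carries out in its sample case); and deducing the first claim from the identity, as you do, is legitimate and if anything slightly cleaner than the paper's direct check that the toggled value stays in range.

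The gap is in the one step you single out for justification, and it is precisely the delicate point. The substitution $\infty\to N$ is \emph{not} order-preserving on $\NN$: for example $8<\infty-5$ in $\NN$, while $8>N-5$ when $N=10$. Hence ``max and min commute with the substitution'' is not automatic. It is automatic when all cover values on the relevant side are finite, or all are of the form $\infty-m$; but if $\textsf{x}$ has two (necessarily incomparable) upper covers, one carrying a finite entry $a$ and the other an entry $\infty-m$ with $a>N-m$, then the $\NN$-maximum substitutes to $N-m$ while the maximum of the substituted values is $a$, and the two sides of your identity differ. The hypothesis ``$N$ is close enough to infinity for $\rho$'' only compares entries at \emph{comparable} elements of $\pos$, so it does not by itself exclude this configuration: on the diamond poset, take the bottom, the two middle elements, and the top to carry $\infty$, $8$, $\infty-5$, $0$ with $N=10$; then $N$ is close enough for $\rho$, yet toggling at the bottom gives $\infty-5$ (substituting to $5$) on the $\NN$ side versus $\max(8,5)+N-N=8$ on the $[0,N]$ side. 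To be fair, the paper's own three-case computation is equally silent on this mixed-cover situation, treating $M$, $m$, $\rho(\textsf{x})$ as three numbers whose substitutions are tacitly assumed to be the max/min of the substituted cover values; so your write-up is no less complete than the printed proof. But the sentence you offer as justification is false as stated, and the commutation step needs either an argument handling (or excluding) the mixed-cover cases, or a strengthening of ``close enough to infinity'' to a condition that compares every finite entry of $\rho$ with $N-m$ for every offset $m$ occurring in $\rho$.
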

\begin{proof}
To prove the first claim, note that \[\rho(\textsf{x}),\min_{\textsf{y} \lessdot \textsf{x}}\rho(\textsf{y}),\max_{\textsf{x} \lessdot \textsf{y}}\rho(\textsf{y})\in\{\infty-m\mid m\in\mathbb{N},m\leq N\}\cup\{m \mid m\in\mathbb{N},m\leq N\}.\] Since $\min_{\textsf{y} \lessdot \textsf{x}}\rho(\textsf{y})\geq \rho(\textsf{x}) \geq \max_{\textsf{x} \lessdot \textsf{y}}\rho(\textsf{y})$, it follows that $(t_\textsf{x}\rho)(\textsf{x})\in\{\infty-m\mid m\in\mathbb{N},m\leq N\}\cup\{m \mid m\in\mathbb{N},m\leq N\}$ and $N$ is close enough to infinity for $t_\textsf{x}\rho$.

One can easily prove the second claim by examining three cases that are determined by whether or not each term in $(t_\textsf{x}\rho)(\textsf{x})$ contains $\infty$. For example, suppose \[\rho(\textsf{x}),\min_{\textsf{y} \lessdot \textsf{x}}\rho(\textsf{y}),\max_{\textsf{x} \lessdot \textsf{y}}\rho(\textsf{y})\in\{\infty-m\mid m\leq N\}.\] Let $\min_{\textsf{y} \lessdot \textsf{x}}\rho(\textsf{y})=\infty-m_1$ $\rho(\textsf{x})=\infty-m_2$, and $\max_{\textsf{x} \lessdot \textsf{y}}\rho(\textsf{y})=\infty-m_3$, with $m_1\leq m_2\leq m_3$. Then $(t_x\rho)|_{\infty\to N}(\textsf{x})=(N-m_1)+(N-m_3)-(N-m_2)=N-(m_1+m_2-m_3),$ which clearly is the same as $(t_\textsf{x}(\rho|_{\infty\to N}))(\textsf{x})$.
\end{proof}

\begin{proof}[Proof of Theorem~\ref{intro-period-N}]
Now suppose $\rho$ is a reverse plane partition with entries in $[0,N]$. We can then view it as a reverse plane partition with entries in $\NN$, for which $N$ is close enough to infinity. By Theorem~\ref{thm_proh_is_Id}, we know that $\pro_Q^h$ is the identity for $\rho$ as a reverse plane partition with entries in $\NN$. By Lemma~\ref{lem:inftytoN}, the toggles carried out in computing $\pro_Q^h(\rho)$ as a reverse plane partition with entries in $\NN$ can all be replaced with toggles for reverse plane partitions with entries in $[0,N]$. We conclude that promotion has order dividing $h$ on reverse plane partitions with entries in $[0,N]$, as desired.
\end{proof}

\begin{remark}\label{rem_prev_periodicity_results}
{There is a different approach to proving Theorem \ref{intro-period-N}, based on existing results in the literature, which we now sketch. In \cite{grinberg2015iterative}, Grinberg
and Roby study an operation called \textit{birational rowmotion} on maps from a poset 
to rational functions.  They show that, if $\mathsf P$ is a minuscule poset of type $A_n$, then birational rowmotion has order $n+1$ (which equals the Coxeter number in this case). It is well understood that one can pass via tropicalization from a periodicity statement about 
birational rowmotion to the corresponding statement about piecewise-linear rowmotion.}

{Piecewise-linear rowmotion is an operation very similar to the (piecewise-linear) 
promotion that we have been studying.  In particular, piecewise-linear rowmotion is also defined as 
a composition of toggles in which each element of the poset is toggled once; the 
difference is in the order in which the toggles are carried out. 

It follows from work of Striker and Williams that piecewise-linear rowmotion and promotion correspond to conjugate  elements in the \textit{toggle group} \cite[Lemmas 5.1 and 5.2]{striker2012promotion}; the statements of these results are in more restrictive generality, but the proofs apply in our setting.  The Striker--Williams
result implies that the order of piecewise-linear promotion and piecewise-linear rowmotion are the same.  This implies Theorem \ref{intro-period-N} for minuscule posets of type $A_n$. 
%
%
It is possible to deduce the result for the minuscule poset of type $D_n$ and the choice of one of the two antennae as minuscule vertex via an unfolding argument to type $A_{2n-3}$.  The remaining cases can presumably be dealt with by directly analyzing the behaviour of birational promotion, though we have not carried this out.}

 We also remark that in type $A_n$ with minuscule node $m$, the periodicity of $\pro_Q$ follows from the tropical version of $A_{m-1} \times A_{n-m}$ Zamolodchikov periodicity. See \cite[Section 4.4]{Ro} for more on this. Periodicity of $\pro_Q$ acting on reverse plane partitions for other minuscule posets does not seem to be related to Zamolodchikov periodicity.									
\end{remark}

\section{Type $A$}

For this section, we fix a type $A_n$ quiver $Q$ and a vertex $m$ (which is necessarily minuscule).
\subsection{Posets}
In type $A$, for $M\in\mathcal C_{Q,m}$, we can give an explicit combinatorial description of the entries of $\RSK$ in $\pi^{-1}(i)$ in terms of certain combinatorial invariants of a poset $\tpos$. To construct $\tpos$ from $\pos$, first form the full subposet of $\pos$ whose elements are indecomposable summands of $M$ that are supported at vertex $i$. Then, replace each element in this subposet with a chain whose length is the multiplicity of the corresponding indecomposable summand of $M$ as shown in Figure~\ref{fig:posetconstruction}. We define $\tpos$ to be the resulting poset.

\begin{figure}
\begin{center}
\raisebox{.3in}{
\begin{tikzpicture}
\node (a) at (0,0) {};
\node (a') at (1,0) {$\cdots$};
\node (a'') at (2,0) {};
\node (v) at (1, -1) {$r$};
\node (b) at (0,-2) {};
\node (b') at (1, -2) {$\cdots$};
\node (b'') at (2,-2) {};
\draw[->] (v)--(a);
\draw[->] (v)--(a'');
\draw[->] (b) --(v);
\draw[->] (b'')--(v);
\end{tikzpicture}}\hspace{.5in}\raisebox{.7in}{$\longrightarrow$}\hspace{.5in}
\begin{tikzpicture}
\node (a) at (0,0) {};
\node (a') at (1,0) {$\cdots$};
\node (a'') at (2,0) {};
\node (v) at (1, -1) {$r$};
\node (v') at (1, -2) {$\vdots$};
\node (v'') at (1, -3) {$r$};
\node (b) at (0,-4) {};
\node (b') at (1, -4) {$\cdots$};
\node (b'') at (2,-4) {};
\draw[->] (v)--(a);
\draw[->] (v)--(a'');
\draw[->] (b) --(v'');
\draw[->] (b'')--(v'');
\draw[->] (v'')--(v');
\draw[->] (v')--(v);
\end{tikzpicture}
\end{center}
\caption{In constructing $\tpos$, we replace elements by a chain whose length is the multiplicity of the corresponding representation in $M$.}
\label{fig:posetconstruction}
\end{figure}
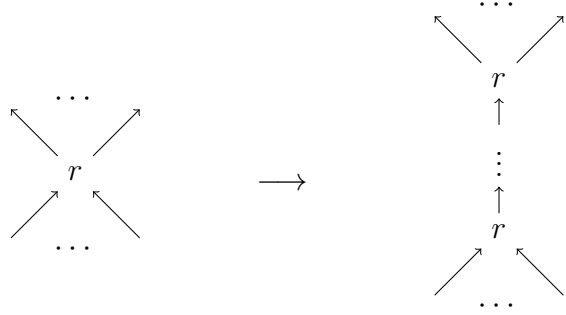

For any poset $\textsf{P}$, let $\Delta_k(\textsf{P})$ denote the largest cardinality of a subset of the elements of $\textsf{P}$ that may be partitioned into $k$ disjoint chains. The numbers $\Delta_k(P)$ are (a trivial re-encoding of) the Greene--Kleitman invariants of $P$ \cite{greene1976structure}. By convention, $\Delta_0(\textsf{P})=0$.

\begin{proposition}\label{prop:GKinv}
Let $M$ be a representation belonging to $\cat$. Let $\lambda^i=(\lambda_1^i,\ldots,\lambda_k^i)$ denote the partition on $\tau$-orbit $i$ of $\RSK$.
 Then 
\begin{equation}\lambda_j^i=\Delta_j(\tpos)-\Delta_{j-1}(\tpos).\label{delta-eq}\end{equation} 
\end{proposition}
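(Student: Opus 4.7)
My approach is to identify $\lambda^i = \JFg(M)^i$ with the Greene--Kleitman partition of the poset $\tpos$, via an analysis of a generic nilpotent endomorphism of $M$ restricted to the vector space $M_i$.

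First I would establish that $M_i$ has a canonical basis indexed by $\tpos$. In type $A$, every indecomposable representation $M_\mathsf{u}$ of $Q$ supported at vertex $i$ satisfies $\dim(M_\mathsf{u})_i = 1$. Writing $M \simeq \bigoplus_\mathsf{u} M_\mathsf{u}^{c_\mathsf{u}}$, we obtain $M_i = \bigoplus_{\mathsf{u}:\, i\in \supp(M_\mathsf{u})} \fk^{c_\mathsf{u}}$, whose natural basis indexed by pairs $(\mathsf{u}, k)$ with $1 \le k \le c_\mathsf{u}$ is exactly the underlying set of $\tpos$.

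Second, I would analyse how a generic nilpotent $\phi \in \NEnd(M)$ acts on $M_i$ in this basis. Decomposing $\phi$ into components $\phi^{\mathsf{u},\mathsf{v}} \in \Hom(M_\mathsf{u}, M_\mathsf{v}) \otimes \Mat_{c_\mathsf{v}\times c_\mathsf{u}}(\fk)$, and using the fact that in type $A$ each $\Hom(M_\mathsf{u}, M_\mathsf{v})$ is at most one-dimensional and vanishes unless $\mathsf{u} \le \mathsf{v}$ in $\pos$, the matrix of $\phi_i$ in the canonical basis of $M_i$ becomes upper-triangular with respect to the order on $\tpos$, after a suitable choice of basis within each chain of $\tpos$ (which is possible since a generic nilpotent matrix is conjugate to a strictly upper-triangular one). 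By Lemma~\ref{nend-irred} the variety $\NEnd(M)$ is irreducible, so generically $\phi_i$ takes arbitrary upper-triangular values subject to these constraints. I would then invoke the classical result---essentially equivalent to the Greene--Kleitman theorem---that for a finite poset $\mathsf P$ the generic Jordan type of a nilpotent operator on $\fk^{|\mathsf P|}$ that is strictly upper-triangular with respect to $\mathsf P$ is the partition whose $j$-th part is $\Delta_j(\mathsf P)-\Delta_{j-1}(\mathsf P)$. Applied to $\mathsf P = \tpos$, this yields equation \eqref{delta-eq}.

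The main obstacle will be the second step: rigorously checking that the arrow-compatibility conditions imposed on $\phi \in \NEnd(M)$ by the quiver $Q$ contribute no extra constraints on $\phi_i$ beyond those already encoded by $\tpos$, and that genericity in $\NEnd(M)$ transfers to genericity of $\phi_i$ among upper-triangular operators on $M_i$. In type $A$ this should follow cleanly from the one-dimensionality of the indecomposable representations at each vertex, which makes each arrow component of $\phi$ either zero or an isomorphism and thus leaves no constraint beyond the order of $\tpos$; nevertheless the parametrisation has to be set up with care. An alternative route that avoids invoking an external Greene--Kleitman-type result is a direct induction on the number of indecomposable summands via the explicit algorithm of Theorem~\ref{thm_4_7_alg}: adding a single summand to $M$ corresponds to adjoining a new element to $\tpos$ and to a single toggling/incrementing step for $\lambda^i$, and the resulting change in both sides of \eqref{delta-eq} can be tracked in parallel, mirroring the standard proof that RSK insertion realises Greene's chain invariants.
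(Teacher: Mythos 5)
Your proposal follows essentially the same route as the paper's proof: identify $M_i$ with a space indexed by $\tpos$, realize the vertex-$i$ action of a nilpotent endomorphism of $M$ inside the incidence algebra of $\tpos$, and invoke the Gansner/Britz-type result that generic nilpotent elements of an incidence algebra have Jordan type given by the chain invariants $\Delta_j(\tpos)-\Delta_{j-1}(\tpos)$ --- precisely the external result the paper cites. The genericity-transfer issue you flag as the main obstacle is the same point the paper dispatches briefly (by noting that Gansner's strongly generic matrices, interpreted as endomorphisms of $M$ via chosen bases of the Hom spaces, have dense image under change of basis), so there is no essential difference in approach.
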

\begin{proof} Let $I(\tpos)$ denote the incidence algebra of $\tpos$: the set of $|\tpos|\times|\tpos|$ complex matrices $E$ such that $E_{ij}\neq 0$  implies $i\leq j$ in $\tpos$. Define a strongly generic nilpotent element of $I(\tpos)$ to be a matrix in $I(\tpos)$ with the property that the entries $E_{ij}$ for $i<j$ are independent transcendentals over $\mathbb{Q}$ and the entries $E_{ii}$ are zero. A result of Gansner \cite[Theorem 2.1]{gansner1981acyclic}, 
which is also explained in \cite{britz1999finite}, shows that if $X$ is a strongly generic nilpotent matrix in $I(\tpos)$, then the sizes of the Jordan blocks of $X$ satisfy (\ref{delta-eq}).

By choosing a basis for each $\Hom(V,W)$ for $V$ and $W$ indecomposable representations in $\mathcal C_{Q,m}$, we can interpret Gansner's matrix $X$ as defining a nilpotent endomorphism of $M$.  The image of the totally generic matrices under change of basis is dense in the endomorphisms of $M$.  Thus, given Gansner's result that they all have the same Jordan form given by (\ref{delta-eq}), this must be the Jordan form of a generic nilpotent endomorphism in our sense as well.  
\end{proof}

\begin{remark}
There is no obvious analogue of Theorem~\ref{prop:GKinv} in other types. This stems from the fact that some of the indecomposable representations have vector spaces of rank greater than one at one or more vertices of the quiver. 
\end{remark}

\subsection{Type $A$ examples}
Suppose we start with the type $A$ quiver $Q$ shown in Figure~\ref{fig:HGexample}, where 3 is the chosen minuscule vertex. The corresponding AR quiver is also shown in the figure. The type $A$ minuscule poset $\pos$ associated with vertex 3 is shown in black in the figure. 

We denote a representation $M$ as a labeling of the poset $\pos$, where the label at a vertex denotes how many copies of the corresponding indecomposable are in $M$. The representation $M$ in Figure~\ref{fig:HGexample} contains 4 copies of the indecomposable with dimension vector 11100 and 3 copies of the indecomposable with dimension vector 01100. 

The order we use to compute $\RSK$ is indicated in the subscripts on the AR quiver. The procedure is shown step by step in the 12 fillings in Figure~\ref{fig:HGexample}. Note that there are 15 stages involved in the procedure---one for each indecomposable module---but the first three do not change the resulting reverse plane partition and so are omitted from the figure. In future sections, it will be useful to realize the resulting reverse plane partition as a reverse plane partition for the Young diagram of shape $(3,3,3)$, as shown.

Figure~\ref{fig:Pakexample} shows another example using a different orientation $Q$ and a representation that assigns the same multiplicities to the representations with the same dimension vectors as above. In this figure, we show how to carry out the algorithm by identifying the intermediate fillings with fillings of a Young diagram. There are again 15 stages involved in the procedure, but stages 1 through 4, 6, and 8 do not change the resulting reverse plane partition and so are omitted from the figure.

A reader who is familiar with the known bijections between multisets of rim hooks of a Young diagram and reverse plane partitions of the same Young diagram can check that the example in Figure~\ref{fig:HGexample} agrees with the well-known Hillman--Grassl correspondence and the example in Figure~\ref{fig:Pakexample} agrees with a generalization of the RSK correspondence first described by Pak \cite{pak2001hook} and Berenstein--Kirillov \cite{kirillov1995groups} and later by Sulzgruber \cite{sulzgruberfull}  and Hopkins \cite{hopkins2014rsk}. In fact, this is not a coincidence, and we make the correspondence precise in Sections~\ref{sec:HG} and ~\ref{sec:Pak}.

\begin{figure}
\[\begin{tikzpicture}[scale=1]
\node (11100) at (1,0) {$00100_{6}$};
\node (00010) at (3,0) {\textcolor{gray}{$00010_3$}};
\node (00001) at (5,0) {\textcolor{gray}{$00001_1$}}; 
\node (01100) at (0,1) {$01100_{9}$};
\node (11110) at (2,1) {$00110_5$};
\node (00011) at (4,1) {\textcolor{gray}{$00011_2$}};
\node (01110) at (1,2) {$01110_{8}$};
\node (11111) at (3,2) {$00111_4$};
\node (01111) at (2,3) {$01111_7$};
\node (00100) at (-1,2) {$11100_{13}$};
\node (00110) at (0,3) {$11110_{12}$};
\node (00111) at (1,4) {$11111_{11}$};
\node (10000) at (-3,0) {\textcolor{gray}{$10000_{15}$}};
\node (01000) at (-1,0) {\textcolor{gray}{$01000_{10}$}};
\node (11000) at (-2,1) {\textcolor{gray}{$11000_{14}$}};
\draw[->] (00100)--(01100);
\draw[->] (01100)--(11100);
\draw[->] (00100)--(00110);
\draw[->] (00110)--(00111);
\draw[->] (00110)--(01110);
\draw[->] (01100)--(01110);
\draw[->] (11100)--(11110);
\draw[->] (11110)--(11111);
\draw[->] (10000)--(11000);
\draw[->] (01110)--(11110);
\draw[->] (11110)--(00010);
\draw[->] (00111)--(01111);
\draw[->] (01111)--(11111);
\draw[->] (11111)--(00011);
\draw[->] (00011)--(00001);
\draw[->] (00010)--(00011);
\draw[->] (01110)--(01111);
\draw[->] (11000)--(01000);
\draw[->] (11000)--(00100);
\draw[->] (01000)--(01100);
\end{tikzpicture}
\ytableausetup{boxsize=.2in}
\raisebox{1in}{$M=$
\raisebox{-.65in}{\begin{tikzpicture}[scale=.7]
\node (11100) at (1,0) {0};
\node (01100) at (0,1) {3};
\node (11110) at (2,1) {1};
\node (01110) at (1,2) {1};
\node (11111) at (3,2) {1};
\node (01111) at (2,3) {0};
\node (00100) at (-1,2) {4};
\node (00110) at (0,3) {0};
\node (00111) at (1,4) {2};
\draw[->] (00100)--(01100);
\draw[->] (01100)--(11100);
\draw[->] (00100)--(00110);
\draw[->] (00110)--(00111);
\draw[->] (00110)--(01110);
\draw[->] (01100)--(01110);
\draw[->] (11100)--(11110);
\draw[->] (11110)--(11111);
\draw[->] (01110)--(11110);
\draw[->] (00111)--(01111);
\draw[->] (01111)--(11111);
\draw[->] (01110)--(01111);
\end{tikzpicture}}}\]

\[\begin{tikzpicture}[scale=.7]
\node (11100) at (1,0) {\textcolor{lightgray}{0}};
\node (01100) at (0,1) {\textcolor{lightgray}{0}};
\node (11110) at (2,1) {\textcolor{lightgray}{0}};
\node (01110) at (1,2) {\textcolor{lightgray}{0}};
\node (11111) at (3,2) {1};
\node (01111) at (2,3) {\textcolor{lightgray}{0}};
\node (00100) at (-1,2) {\textcolor{lightgray}{0}};
\node (00110) at (0,3) {\textcolor{lightgray}{0}};
\node (00111) at (1,4) {\textcolor{lightgray}{0}};
\draw[->] (00100)--(01100);
\draw[->] (01100)--(11100);
\draw[->] (00100)--(00110);
\draw[->] (00110)--(00111);
\draw[->] (00110)--(01110);
\draw[->] (01100)--(01110);
\draw[->] (11100)--(11110);
\draw[->] (11110)--(11111);
\draw[->] (01110)--(11110);
\draw[->] (00111)--(01111);
\draw[->] (01111)--(11111);
\draw[->] (01110)--(01111);
\end{tikzpicture}
\begin{tikzpicture}[scale=.7]
\node (11100) at (1,0) {\textcolor{lightgray}{0}};
\node (01100) at (0,1) {\textcolor{lightgray}{0}};
\node (11110) at (2,1) {2};
\node (01110) at (1,2) {\textcolor{lightgray}{0}};
\node (11111) at (3,2) {1};
\node (01111) at (2,3) {\textcolor{lightgray}{0}};
\node (00100) at (-1,2) {\textcolor{lightgray}{0}};
\node (00110) at (0,3) {\textcolor{lightgray}{0}};
\node (00111) at (1,4) {\textcolor{lightgray}{0}};
\draw[->] (00100)--(01100);
\draw[->] (01100)--(11100);
\draw[->] (00100)--(00110);
\draw[->] (00110)--(00111);
\draw[->] (00110)--(01110);
\draw[->] (01100)--(01110);
\draw[->] (11100)--(11110);
\draw[->] (11110)--(11111);
\draw[->] (01110)--(11110);
\draw[->] (00111)--(01111);
\draw[->] (01111)--(11111);
\draw[->] (01110)--(01111);
\end{tikzpicture}
\begin{tikzpicture}[scale=.7]
\node (11100) at (1,0) {2};
\node (01100) at (0,1) {\textcolor{lightgray}{0}};
\node (11110) at (2,1) {2};
\node (01110) at (1,2) {\textcolor{lightgray}{0}};
\node (11111) at (3,2) {1};
\node (01111) at (2,3) {\textcolor{lightgray}{0}};
\node (00100) at (-1,2) {\textcolor{lightgray}{0}};
\node (00110) at (0,3) {\textcolor{lightgray}{0}};
\node (00111) at (1,4) {\textcolor{lightgray}{0}};
\draw[->] (00100)--(01100);
\draw[->] (01100)--(11100);
\draw[->] (00100)--(00110);
\draw[->] (00110)--(00111);
\draw[->] (00110)--(01110);
\draw[->] (01100)--(01110);
\draw[->] (11100)--(11110);
\draw[->] (11110)--(11111);
\draw[->] (01110)--(11110);
\draw[->] (00111)--(01111);
\draw[->] (01111)--(11111);
\draw[->] (01110)--(01111);
\end{tikzpicture}
\begin{tikzpicture}[scale=.7]
\node (11100) at (1,0) {2};
\node (01100) at (0,1) {\textcolor{lightgray}{0}};
\node (11110) at (2,1) {2};
\node (01110) at (1,2) {\textcolor{lightgray}{0}};
\node (11111) at (3,2) {1};
\node (01111) at (2,3) {1};
\node (00100) at (-1,2) {\textcolor{lightgray}{0}};
\node (00110) at (0,3) {\textcolor{lightgray}{0}};
\node (00111) at (1,4) {\textcolor{lightgray}{0}};
\draw[->] (00100)--(01100);
\draw[->] (01100)--(11100);
\draw[->] (00100)--(00110);
\draw[->] (00110)--(00111);
\draw[->] (00110)--(01110);
\draw[->] (01100)--(01110);
\draw[->] (11100)--(11110);
\draw[->] (11110)--(11111);
\draw[->] (01110)--(11110);
\draw[->] (00111)--(01111);
\draw[->] (01111)--(11111);
\draw[->] (01110)--(01111);
\end{tikzpicture}
\begin{tikzpicture}[scale=.7]
\node (11100) at (1,0) {2};
\node (01100) at (0,1) {\textcolor{lightgray}{0}};
\node (11110) at (2,1) {2};
\node (01110) at (1,2) {3};
\node (11111) at (3,2) {0};
\node (01111) at (2,3) {1};
\node (00100) at (-1,2) {\textcolor{lightgray}{0}};
\node (00110) at (0,3) {\textcolor{lightgray}{0}};
\node (00111) at (1,4) {\textcolor{lightgray}{0}};
\draw[->] (00100)--(01100);
\draw[->] (01100)--(11100);
\draw[->] (00100)--(00110);
\draw[->] (00110)--(00111);
\draw[->] (00110)--(01110);
\draw[->] (01100)--(01110);
\draw[->] (11100)--(11110);
\draw[->] (11110)--(11111);
\draw[->] (01110)--(11110);
\draw[->] (00111)--(01111);
\draw[->] (01111)--(11111);
\draw[->] (01110)--(01111);
\end{tikzpicture}\]
\[\begin{tikzpicture}[scale=.7]
\node (11100) at (1,0) {2};
\node (01100) at (0,1) {6};
\node (11110) at (2,1) {0};
\node (01110) at (1,2) {3};
\node (11111) at (3,2) {0};
\node (01111) at (2,3) {1};
\node (00100) at (-1,2) {\textcolor{lightgray}{0}};
\node (00110) at (0,3) {\textcolor{lightgray}{0}};
\node (00111) at (1,4) {\textcolor{lightgray}{0}};
\draw[->] (00100)--(01100);
\draw[->] (01100)--(11100);
\draw[->] (00100)--(00110);
\draw[->] (00110)--(00111);
\draw[->] (00110)--(01110);
\draw[->] (01100)--(01110);
\draw[->] (11100)--(11110);
\draw[->] (11110)--(11111);
\draw[->] (01110)--(11110);
\draw[->] (00111)--(01111);
\draw[->] (01111)--(11111);
\draw[->] (01110)--(01111);
\end{tikzpicture}
\begin{tikzpicture}[scale=.7]
\node (11100) at (1,0) {4};
\node (01100) at (0,1) {6};
\node (11110) at (2,1) {0};
\node (01110) at (1,2) {3};
\node (11111) at (3,2) {0};
\node (01111) at (2,3) {1};
\node (00100) at (-1,2) {\textcolor{lightgray}{0}};
\node (00110) at (0,3) {\textcolor{lightgray}{0}};
\node (00111) at (1,4) {\textcolor{lightgray}{0}};
\draw[->] (00100)--(01100);
\draw[->] (01100)--(11100);
\draw[->] (00100)--(00110);
\draw[->] (00110)--(00111);
\draw[->] (00110)--(01110);
\draw[->] (01100)--(01110);
\draw[->] (11100)--(11110);
\draw[->] (11110)--(11111);
\draw[->] (01110)--(11110);
\draw[->] (00111)--(01111);
\draw[->] (01111)--(11111);
\draw[->] (01110)--(01111);
\end{tikzpicture}
\begin{tikzpicture}[scale=.7]
\node (11100) at (1,0) {4};
\node (01100) at (0,1) {6};
\node (11110) at (2,1) {0};
\node (01110) at (1,2) {3};
\node (11111) at (3,2) {0};
\node (01111) at (2,3) {1};
\node (00100) at (-1,2) {\textcolor{lightgray}{0}};
\node (00110) at (0,3) {\textcolor{lightgray}{0}};
\node (00111) at (1,4) {3};
\draw[->] (00100)--(01100);
\draw[->] (01100)--(11100);
\draw[->] (00100)--(00110);
\draw[->] (00110)--(00111);
\draw[->] (00110)--(01110);
\draw[->] (01100)--(01110);
\draw[->] (11100)--(11110);
\draw[->] (11110)--(11111);
\draw[->] (01110)--(11110);
\draw[->] (00111)--(01111);
\draw[->] (01111)--(11111);
\draw[->] (01110)--(01111);
\end{tikzpicture}
\begin{tikzpicture}[scale=.7]
\node (11100) at (1,0) {4};
\node (01100) at (0,1) {6};
\node (11110) at (2,1) {0};
\node (01110) at (1,2) {3};
\node (11111) at (3,2) {0};
\node (01111) at (2,3) {2};
\node (00100) at (-1,2) {\textcolor{lightgray}{0}};
\node (00110) at (0,3) {3};
\node (00111) at (1,4) {3};
\draw[->] (00100)--(01100);
\draw[->] (01100)--(11100);
\draw[->] (00100)--(00110);
\draw[->] (00110)--(00111);
\draw[->] (00110)--(01110);
\draw[->] (01100)--(01110);
\draw[->] (11100)--(11110);
\draw[->] (11110)--(11111);
\draw[->] (01110)--(11110);
\draw[->] (00111)--(01111);
\draw[->] (01111)--(11111);
\draw[->] (01110)--(01111);
\end{tikzpicture}
\begin{tikzpicture}[scale=.7]
\node (11100) at (1,0) {4};
\node (01100) at (0,1) {6};
\node (11110) at (2,1) {0};
\node (01110) at (1,2) {2};
\node (11111) at (3,2) {0};
\node (01111) at (2,3) {2};
\node (00100) at (-1,2) {10};
\node (00110) at (0,3) {3};
\node (00111) at (1,4) {3};
\draw[->] (00100)--(01100);
\draw[->] (01100)--(11100);
\draw[->] (00100)--(00110);
\draw[->] (00110)--(00111);
\draw[->] (00110)--(01110);
\draw[->] (01100)--(01110);
\draw[->] (11100)--(11110);
\draw[->] (11110)--(11111);
\draw[->] (01110)--(11110);
\draw[->] (00111)--(01111);
\draw[->] (01111)--(11111);
\draw[->] (01110)--(01111);
\end{tikzpicture}\]
\[\begin{tikzpicture}[scale=.7]
\node (11100) at (1,0) {4};
\node (01100) at (0,1) {8};
\node (11110) at (2,1) {2};
\node (01110) at (1,2) {2};
\node (11111) at (3,2) {0};
\node (01111) at (2,3) {2};
\node (00100) at (-1,2) {10};
\node (00110) at (0,3) {3};
\node (00111) at (1,4) {3};
\draw[->] (00100)--(01100);
\draw[->] (01100)--(11100);
\draw[->] (00100)--(00110);
\draw[->] (00110)--(00111);
\draw[->] (00110)--(01110);
\draw[->] (01100)--(01110);
\draw[->] (11100)--(11110);
\draw[->] (11110)--(11111);
\draw[->] (01110)--(11110);
\draw[->] (00111)--(01111);
\draw[->] (01111)--(11111);
\draw[->] (01110)--(01111);
\end{tikzpicture}
\begin{tikzpicture}[scale=.7]
\node (11100) at (1,0) {6};
\node (01100) at (0,1) {8};
\node (11110) at (2,1) {2};
\node (01110) at (1,2) {2};
\node (11111) at (3,2) {0};
\node (01111) at (2,3) {2};
\node (00100) at (-1,2) {10};
\node (00110) at (0,3) {3};
\node (00111) at (1,4) {3};
\draw[->] (00100)--(01100);
\draw[->] (01100)--(11100);
\draw[->] (00100)--(00110);
\draw[->] (00110)--(00111);
\draw[->] (00110)--(01110);
\draw[->] (01100)--(01110);
\draw[->] (11100)--(11110);
\draw[->] (11110)--(11111);
\draw[->] (01110)--(11110);
\draw[->] (00111)--(01111);
\draw[->] (01111)--(11111);
\draw[->] (01110)--(01111);
\end{tikzpicture}
\raisebox{.6in}{$=\RSK$}\hspace{.7in}
\raisebox{.8in}{
\begin{ytableau}
0 & 2 & 3 \\
2 & 2 & 3 \\
6 & 8 & 10
\end{ytableau}}
\]
\caption{The top left image is the AR quiver associated with the quiver $Q=1\leftarrow 2 \leftarrow \mathbf{3}\leftarrow 4 \leftarrow 5$ with chosen minuscule vertex 3. The dimension vectors with support on vertex 3 are black, while the others are gray. The top right image represents a representation $M$ whose indecomposable summands all have support on vertex 3. The images below show the steps in computing $\RSK$, and the bottom right shows the corresponding reverse plane partition of a Young diagram. }
\label{fig:HGexample}
\end{figure}
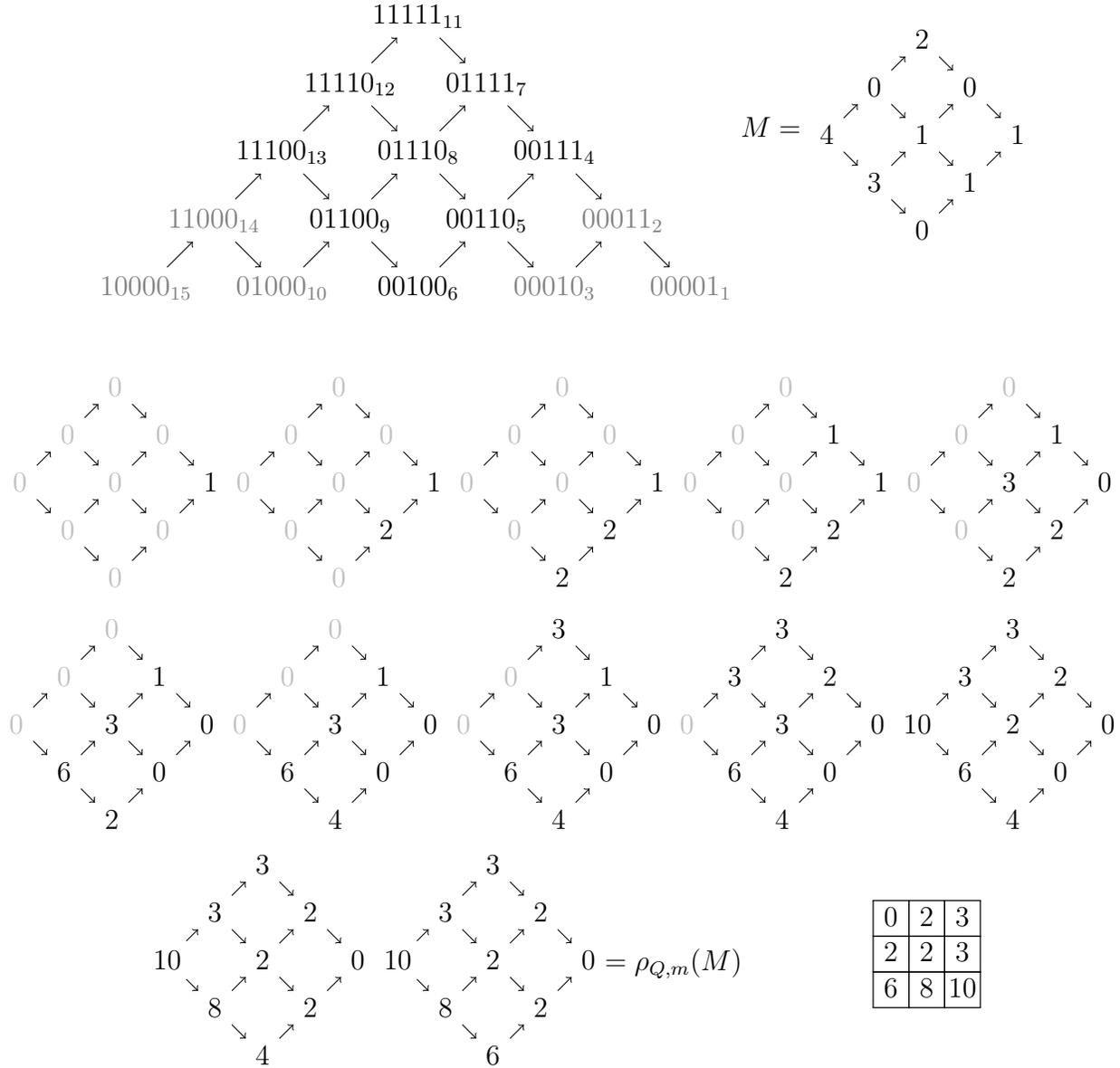

\begin{figure}
\[\begin{tikzpicture}[scale=1]
\node (11100) at (1,0) {$11100_{13}$};
\node (00010) at (3,0) {\textcolor{gray}{$00010_8$}};
\node (00001) at (5,0) {\textcolor{gray}{$00001_3$}}; 
\node (01100) at (0,1) {$01100_{14}$};
\node (11110) at (2,1) {$11110_9$};
\node (00011) at (4,1) {\textcolor{gray}{$00011_4$}};
\node (01110) at (1,2) {$01110_{10}$};
\node (11111) at (3,2) {$11111_5$};
\node (01111) at (2,3) {$01111_7$};
\node (11000) at (4,3) {\textcolor{gray}{$11000_2$}};
\node (01000) at (3,4) {\textcolor{gray}{$01000_6$}};
\node (00100) at (-1,2) {$00100_{15}$};
\node (00110) at (0,3) {$00110_{12}$};
\node (00111) at (1,4) {$00111_{11}$};
\node (10000) at (5,4) {\textcolor{gray}{$10000_1$}};
\draw[->] (00100)--(01100);
\draw[->] (01100)--(11100);
\draw[->] (00100)--(00110);
\draw[->] (00110)--(00111);
\draw[->] (00110)--(01110);
\draw[->] (01100)--(01110);
\draw[->] (11100)--(11110);
\draw[->] (11110)--(11111);
\draw[->] (11111)--(11000);
\draw[->] (11000)--(10000);
\draw[->] (01110)--(11110);
\draw[->] (11110)--(00010);
\draw[->] (00111)--(01111);
\draw[->] (01111)--(11111);
\draw[->] (11111)--(00011);
\draw[->] (00011)--(00001);
\draw[->] (00010)--(00011);
\draw[->] (01110)--(01111);
\draw[->] (01111)--(01000);
\draw[->] (01000)--(11000);
\end{tikzpicture}
\raisebox{1in}{$M=$
\raisebox{-.65in}{\begin{tikzpicture}[scale=.7]
\node (11100) at (1,0) {4};
\node (01100) at (0,1) {3};
\node (11110) at (2,1) {0};
\node (01110) at (1,2) {1};
\node (11111) at (3,2) {1};
\node (01111) at (2,3) {0};
\node (00100) at (-1,2) {0};
\node (00110) at (0,3) {1};
\node (00111) at (1,4) {2};
\draw[->] (00100)--(01100);
\draw[->] (01100)--(11100);
\draw[->] (00100)--(00110);
\draw[->] (00110)--(00111);
\draw[->] (00110)--(01110);
\draw[->] (01100)--(01110);
\draw[->] (11100)--(11110);
\draw[->] (11110)--(11111);
\draw[->] (01110)--(11110);
\draw[->] (00111)--(01111);
\draw[->] (01111)--(11111);
\draw[->] (01110)--(01111);
\end{tikzpicture}}}\]

\begin{eqnarray*}
&\begin{ytableau}
1 &\textcolor{lightgray}{0}&\textcolor{lightgray}{0} \\
\textcolor{lightgray}{0} & \textcolor{lightgray}{0}& \textcolor{lightgray}{0}\\
\textcolor{lightgray}{0} & \textcolor{lightgray}{0}& \textcolor{lightgray}{0}
\end{ytableau}\hspace{.1in}
\begin{ytableau}
1 & 1 & \textcolor{lightgray}{0}\\
\textcolor{lightgray}{0} & \textcolor{lightgray}{0}& \textcolor{lightgray}{0}\\
\textcolor{lightgray}{0} & \textcolor{lightgray}{0}& \textcolor{lightgray}{0}
\end{ytableau}\hspace{.1in}
\begin{ytableau}
1 & 1 & \textcolor{lightgray}{0}\\
1 & \textcolor{lightgray}{0}& \textcolor{lightgray}{0}\\
\textcolor{lightgray}{0} & \textcolor{lightgray}{0}& \textcolor{lightgray}{0}
\end{ytableau}\hspace{.1in}
\begin{ytableau}
0 & 1 & \textcolor{lightgray}{0}\\
1 & 2 & \textcolor{lightgray}{0}\\
\textcolor{lightgray}{0} & \textcolor{lightgray}{0}& \textcolor{lightgray}{0}
\end{ytableau}\hspace{.1in}
\begin{ytableau}
0 & 1 & 3\\
1 & 2 & \textcolor{lightgray}{0}\\
\textcolor{lightgray}{0} & \textcolor{lightgray}{0}& \textcolor{lightgray}{0}
\end{ytableau}\hspace{.1in}
\begin{ytableau}
0 & 1 & 3\\
1 & 2 & 4\\
\textcolor{lightgray}{0} & \textcolor{lightgray}{0}& \textcolor{lightgray}{0}
\end{ytableau}\hspace{.1in}
\begin{ytableau}
0 & 1 & 3\\
1 & 2 & 4\\
5 &\textcolor{lightgray}{0} &\textcolor{lightgray}{0} 
\end{ytableau}\hspace{.1in}
\begin{ytableau}
0 & 1 & 3\\
1 & 2 & 4\\
5 & 8 & \textcolor{lightgray}{0}
\end{ytableau}&\\\hspace{.1in}
&\begin{ytableau}
1 & 1 & 3\\
1 & 3 & 4\\
5 & 8 & 8
\end{ytableau}
\raisebox{-.15in}{ $=\RSK$}&
\end{eqnarray*}
\caption{The top left image is the AR quiver associated with the quiver $Q=1\rightarrow 2\rightarrow \mathbf{3}\leftarrow 4 \leftarrow 5$ with chosen minuscule vertex 3. The dimension vectors with support on vertex 3 are black, while the others are gray. The top right image represents a representation $M$ whose indecomposable summands all have support on vertex 3. The lower images show the steps in computing $\RSK$, shown on the corresponding Young diagram.}
\label{fig:Pakexample}
\end{figure}
\subsection{The Hillman--Grassl Correspondence}\label{sec:HG}
A \textit{rim hook} of the Young diagram $\lambda$ is a connected strip of border boxes in $\lambda$ such that the result of removing these boxes is again a Young diagram. The Hillman--Grassl correspondence \cite{hillman1976reverse} is a bijection between multisets of rim hooks of $\lambda$ and the set of reverse plane partitions of $\lambda$. For detailed explanation, we recommend \cite{stanley1999enumerative}. 

Let $M$ be a representation in $\cat$. Then as shown in Figure~\ref{fig:HGexample}, $\RSK$ can be viewed as a reverse plane partition of the Young diagram of shape $m^{(n+1-m)}$. We may identify the indecomposable summands of $M$ as rim hooks of $m^{(n+1-m)}$ by reading through the southeast border of $m^{(n+1-m)}$ from southwest to northeast and including a box in the rim hook exactly when the corresponding entry in the dimension vector of the indecomposable is 1. See Figure~\ref{fig:hooktodimvector}. In this way, we may identify $M$ with a multiset of rim hooks of $m^{(n+1-m)}$. Given a multiset $M$ of rim hooks of $m^{(n+1-m)}$ (i.e., an $M$ in $\cat$), let $HG(M)$ denote the reverse plane partition of shape $m^{(n+1-m)}$ obtained using the Hillman--Grassl correspondence. 

\begin{figure}
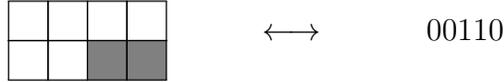

\begin{ytableau}  $ $ & & & \\
$ $ & & *(gray) & *(gray)
\end{ytableau}\hspace{.5in}$\longleftrightarrow$\hspace{.5in} 00110
\caption{A rim hook and the corresponding dimension vector.}
\label{fig:hooktodimvector}
\end{figure}

\begin{theorem}\label{thm:HG} Let $Q$ be a type $A_n$ quiver with chosen minuscule vertex $m$ and the following orientation.
\[Q=1\leftarrow 2 \leftarrow \cdots \leftarrow n-1 \leftarrow n\]
Then for any $M\in \cat$, $\RSK=HG(M)$.
\end{theorem}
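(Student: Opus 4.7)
The plan is to proceed by induction on the total multiplicity $|M|=\sum_k c_k$, exploiting the explicit additive algorithm for $\RSK$ from Theorem~\ref{thm_4_7_alg} and comparing it with an analogous additive description of the Hillman--Grassl correspondence.

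First I would set up the combinatorial dictionary. For the linear orientation $Q = 1 \leftarrow 2 \leftarrow \cdots \leftarrow n$, identify $\pos$ with the Young diagram $\lambda = m^{n+1-m}$ so that the bijection $\mathsf u \mapsto M_{\mathsf u}$ becomes the rim-hook map of Figure~\ref{fig:hooktodimvector}. Using the Auslander--Reiten formula $\tau M_{[a,b]} = M_{[a-1,b-1]}$ in this orientation, one checks directly that the $\tau$-orbits of elements of $\pos$ correspond bijectively to the diagonals of $\lambda$. The linear order on indecomposables chosen in Section~\ref{sec:explicitRSK} then processes each $\tau$-orbit from outer corner inwards along its diagonal; this will be the order in which the rim hooks are added on the HG side as well.

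The heart of the proof is the following inductive claim: the single step $\rho_{k-1} \mapsto \rho_k$ in Theorem~\ref{thm_4_7_alg} has exactly the same effect on the current partial filling as adding $c_k$ copies of the rim hook $H_k$ to an RPP via HG. Recall that, in additive form, HG builds the RPP by incrementing entries by $1$ along the \emph{hook trace} of each new hook---a lattice path inside $\lambda$ running along the diagonal that contains the inner corners of the hook. On our side, the assignment $\rho_k(\mathsf x) = c_k + \max_{\mathsf x\lessdot \mathsf y}\rho_{k-1}(\mathsf y)$ at the box of $M_k$ reproduces HG's initial placement at the outer end of the trace, and the toggles $t_{\tau^\ell M_k}$ for $\ell<0$ should reproduce HG's $+c_k$ propagation along the rest of the trace. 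The induction is carried out along the $\tau$-orbit and simultaneously on $|M|$.

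The main obstacle will be showing that each toggle genuinely acts as an increment by $c_k$ at the box in question. Writing out the toggle formula
\[
t_{\mathsf x}\rho(\mathsf x) = \max_{\mathsf x \lessdot \mathsf y} \rho(\mathsf y) + \min_{\mathsf y \lessdot \mathsf x} \rho(\mathsf y) - \rho(\mathsf x),
\]
this reduces to verifying a telescoping identity on the NW and SE neighbours of $\mathsf x$: one needs that immediately before the toggle, $\min_{\mathsf y \lessdot \mathsf x}\rho(\mathsf y) - \rho(\mathsf x)$ equals exactly $c_k$. The required invariant---propagated along the diagonal using the reverse-plane-partition property of $\rho_{k-1}$ together with the fact that, by our chosen order, every previously processed indecomposable lies strictly to the right of $M_k$ in the AR quiver---is the crucial piece, and is the step where the linear orientation hypothesis is truly used. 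Once this local telescoping is established, the inductive step closes and we obtain $\rho_N = HG(M)$, which is the desired equality.
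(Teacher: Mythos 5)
There is a genuine gap, and it sits exactly where you flagged ``the main obstacle.'' Your inductive step rests on the claim that each toggle $t_{\tau^\ell M_k}$ performed in the step $\rho_{k-1}\mapsto\rho_k$ of Theorem~\ref{thm_4_7_alg} acts as an increment by $c_k$, i.e.\ that immediately before the toggle one has $\min_{\mathsf y\lessdot \mathsf x}\rho(\mathsf y)-\rho(\mathsf x)=c_k$. For the linear orientation this is false, and the paper's own worked example (Figure~\ref{fig:HGexample}) is a counterexample: when the summand with dimension vector $01110$ (multiplicity $1$) is processed, the toggle at the maximal element $11111$ of $\pos$ changes its entry from $1$ to $0$ --- a strict decrease, not an increment by $c_k=1$. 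This is precisely the phenomenon recorded in the paper's final remark: for this orientation the algorithm involves non-trivial toggles that do not simply ``add boxes,'' in contrast with the orientation of Theorem~\ref{thm:Pak}, where your kind of step-by-step matching does work. A second problem is the description of the Hillman--Grassl side: the forward HG map does not insert a rim hook by adding $1$ along a path fixed by the hook (a ``hook trace''); the insertion path is determined dynamically by the current filling, and the insertion order of the hooks matters. So the objects you propose to compare step-by-step do not evolve in parallel, and the induction does not close as stated.

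For contrast, the paper's proof deliberately avoids any intermediate-step comparison. It uses Proposition~\ref{prop:GKinv} (Gansner's theorem on generic nilpotent elements of incidence algebras) to express the entries of $\RSK$ along each fibre $\pi^{-1}(i)$ as successive differences $\Delta_j(\tpos)-\Delta_{j-1}(\tpos)$ of maximal unions of chains in the subposets $\tpos$, identifies these subposets explicitly for the linear orientation (intersections of the support of $M$ with the regions $\{(i,j): j\le k\}$ or $\{(i,j): i\le n+1-k\}$ of the grid $\pos$), and then invokes Gansner's characterization of Hillman--Grassl by the analogous ``$A$-chain'' statistics. In other words, both maps are pinned down by the same Greene--Kleitman-type invariants, so they coincide without ever matching the algorithms move by move. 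If you want to salvage an algorithmic proof, you would need either to work with those invariants or to find a genuinely different local description of HG insertion compatible with the toggle order; the naive local telescoping identity is simply not true here.
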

\begin{proof}
We consider the vertices of $\pos$ to be a grid and index the vertices by letting its minimal element be $(n+1-m,1)$, its maximal element be $(1,m)$. See Figure~\ref{fig:HGARquiver}. The indecomposable representation in position $(i,j)$ has dimension vector 
\[\underbrace{0\cdots 0}_{j-1} 1\cdots 1 \raisebox{0.9pt}{\textcircled{\raisebox{-0.9pt} {1}}} 1 \cdots 1 \underbrace{0 \cdots 0}_{i-1},\] where the $m^{th}$ entry is circled. See Figure~\ref{fig:HGexample} for an example. 

Fix representation $M$ in $\cat$. Then if $k\leq m$, the indecomposables in $\tilde{\textsf{P}}_{{Q,m}}^k(M)$ are exactly those of $\pos$ that are in positions $(i,j)$ with $j\leq k$ and also are summands of $M$.
If $k>m$, the indecomposables in $\tilde{\textsf{P}}_{{Q,m}}^k(M)$ are exactly those of $\pos$ in positions $(i,j)$ with $i\leq n+1-k$ and also are summands of $M$.

It is straightforward to see that the cardinalities of disjoint unions of chains in these $\tilde{\textsf{P}}_{{Q,m}}^k(M)$ correspond to the sizes of the $A$-chains that determine $HG(M)$ as described in \cite{gansner1981hillman}.
\end{proof}

\begin{figure}
\[\begin{tikzpicture}
\node (1) at (0,0) {$(n+1-m,n)$};
\node (2) at (2,2) {$(1,n)$};
\node (3) at (-3,3) {$(n+1-m,1)$};
\node (4) at (-1,5) {$(1,1)$};
\draw[->] (1)--(2);
\draw[->] (4)--(2);
\draw[->] (3)--(1);
\draw[->] (3)--(4);
\end{tikzpicture}\]
\caption{Indexing of elements of $\pos$.}
\label{fig:HGARquiver}
\end{figure}

\begin{remark}\label{remark:HGrectangle}
{The Hillman--Grassl correspondence for any partition $\lambda$ can be constructed using subrectangles of $\lambda$ that contain the top left corner of $\lambda$. There will be one subrectangle for each diagonal of $\lambda$, where the box corresponding to diagonal $i$ has as its bottom right corner the southeasternmost box on diagonal $i$ and has as its top left corner the top left box of $\lambda$. To perform the Hillman--Grassl algorithm on shape $\lambda$, it suffices to perform it on each of these rectangles. This follows easily from the Greene--Kleitman invariants for Hillman--Grassl (see for example \cite[Theorem 3.3]{gansner1981hillman}). We can thus recover the Hillman--Grassl correspondence from $\rho_{Q,m}(-)$---one map for each subrectangle---for any shape $\lambda$.}
\end{remark}

\subsection{The RSK correspondence}\label{sec:Pak}
 
One can consider the classical Robinson--Schensted--Knuth correspondence \cite{knuth1970permutations,robinson1938representations,schensted} to be a bijection between the set of $n\times n$ matrices with entries in $\mathbb{N}$ and the set of reverse plane partitions of an $n\times n$ square. We recommend \cite[Section 1]{hopkins2014rsk} for a detailed explanation of this. We can consider an $n\times n$ matrix with entries in $\mathbb{N}$ to be a multiset of rim hooks of the $n\times n$ square by equating an entry $k$ in position $(i,j)$ with $k$ copies of the rim hook whose northeasternmost box is in row $i$ and whose southwesternmost box is in column $j$. We can thus view Robinson--Schensted--Knuth as a bijection between multisets of rim hooks of a square and reverse plane partitions of that square.
\newcommand{\mRSK}{\textrm{RSK}}

In \cite{pak2001hook} and \cite{kirillov1995groups}, the authors generalize this bijection to obtain a correspondence between multisets of rim hooks of any partition shape $\lambda$ and reverse plane partitions of $\lambda$ that involves toggles. We denote this correspondence by $\mRSK$ and recommend \cite[Section 2]{hopkins2014rsk} for further details about this bijection.

We argue that our bijection $\rho_{Q,m}$ recovers $\mRSK$ for rectangles. We again use the identification between rim hooks of a rectangle and dimension vectors of indecomposable representations in $\cat$ from Section~\ref{sec:HG}. Let $\mRSK(M)$ denote the reverse plane partition obtained using the multiset of rim hooks determined by $M$.  The following result is illustrated in Figure~\ref{fig:Pakexample}.

\begin{theorem}\label{thm:Pak} Let $Q$ be a type $A_n$ quiver with chosen minuscule vertex $m$ and the following orientation.
\[Q=1\rightarrow \cdots \rightarrow m \leftarrow \cdots \leftarrow n\]
Then for any $M\in \cat$, $\RSK=\mRSK(M)$.
\end{theorem}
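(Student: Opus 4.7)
The approach mirrors the proof of Theorem~\ref{thm:HG}: use Proposition~\ref{prop:GKinv} to express $\rho_{Q,m}(M)$ via Greene--Kleitman invariants on the subposets $\tilde{\textsf{P}}^i_{Q,m}(M)$, then match these invariants with a known Greene--Kleitman description of $\mathrm{RSK}$ on the rectangle $\textsf{P}_{Q,m}=[m]\times[n+1-m]$. The ingredients are already in place --- Proposition~\ref{prop:GKinv} on our side, and on the RSK side the characterization of the piecewise-linear RSK bijection in terms of chain-partition invariants of natural sub-posets of $[m]\times[n+1-m]$, due (in slightly different forms) to Pak \cite{pak2001hook}, Kirillov--Berenstein \cite{kirillov1995groups}, Sulzgruber \cite{sulzgruberfull} and Hopkins \cite{hopkins2014rsk}. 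So the core work is combinatorial: to identify the subposets produced by our algorithm with those used in the RSK description.

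Concretely, first I would set up the identification between $\textsf{P}_{Q,m}$ and the $m\times(n+1-m)$ rectangle exactly as in Section~\ref{sec:HG}, indexing indecomposables by their rim hooks and noting that the fibre $\pi^{-1}(k)$ is a diagonal of the rectangle. For the orientation $Q= 1\to\cdots\to m\leftarrow\cdots\leftarrow n$, the vertex $m$ is a sink, so the simple $S_m$ is injective and corresponds to the single box at the northeast corner; more generally, the indecomposable $P_k$ corresponds (for $k\le m$) to a rim hook starting at the top row and (for $k>m$) to one ending in the rightmost column, so the $\tau$-orbit through $P_k$ is the corresponding diagonal. The linear order on indecomposables in Theorem~\ref{thm_4_7_alg} processes these diagonals in a prescribed order, and I would check (by direct inspection of the AR quiver of $Q$) that the cumulative subposet $\tilde{\textsf{P}}^i_{Q,m}(M)$ consists exactly of those summands of $M$ whose rim hooks lie in a specific subrectangle $R_i$ of $[m]\times[n+1-m]$ --- the subrectangle whose southeast corner lies on the $i$th processed diagonal. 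This is the direct analogue of the matching carried out in the proof of Theorem~\ref{thm:HG}, adapted from subrectangles nested from one corner to subrectangles ``growing out'' from the sink at vertex $m$ (cf.\ the intermediate fillings shown in Figure~\ref{fig:Pakexample}).

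Second, I would invoke the Greene--Kleitman description of the piecewise-linear $\mathrm{RSK}$ on rectangles: the entries of $\mathrm{RSK}(M)$ along the $i$th diagonal form a partition whose $j$th part equals $\Delta_j(R_i)-\Delta_{j-1}(R_i)$, with $R_i$ the same subrectangle appearing above (see \cite[Theorem 3.3, Section 4]{hopkins2014rsk}, together with Remark~\ref{remark:HGrectangle}, which explains the subrectangle structure in the analogous HG setting). Combining this with Proposition~\ref{prop:GKinv} and the matching $\tilde{\textsf{P}}^i_{Q,m}(M) \cong R_i\cap M$ established above, the partitions along each diagonal agree, and hence $\rho_{Q,m}(M)=\mathrm{RSK}(M)$.

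The main obstacle is the combinatorial matching in the middle paragraph: verifying that our algorithm's order of processing indecomposables (dictated by the reflection-functor order on the AR quiver) really does build up the same nested sequence of subrectangles used in the toggle description of $\mathrm{RSK}$. Once that is done, the proof is essentially automatic, since two reverse plane partitions on a rectangle with the same Greene--Kleitman invariants on each diagonal must coincide.
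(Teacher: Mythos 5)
Your proposal is correct in outline, but it is not the route the paper takes; it is essentially the alternative argument the authors themselves flag in the remark immediately following Theorem~\ref{thm:Pak}. The paper's proof is a direct comparison of toggle algorithms: it observes that for the orientation $1\rightarrow\cdots\rightarrow m\leftarrow\cdots\leftarrow n$ the poset $\pos$ is an order ideal in the AR quiver (viewed as a poset), so in the piecewise-linear procedure of Section~\ref{sec:explicitRSK} every nontrivial $\tau$-orbit toggle happens exactly when the corresponding box is added to the Young diagram; with the explicit dimension-vector indexing of the rectangle, the algorithm of Theorem~\ref{thm_4_7_alg} then coincides step by step with the toggle description of $\mathrm{RSK}$ in \cite[Section 2]{hopkins2014rsk}, and no Greene--Kleitman input is needed. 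Your route instead mirrors Theorem~\ref{thm:HG}: Proposition~\ref{prop:GKinv} on the representation side, matched against a chain-partition (Greene--Kleitman) description of $\mathrm{RSK}$ on nested subrectangles. Such a description does exist, but the reference you want is \cite[Section 6, Theorem 6.1]{garver2017greene}, which the paper cites for precisely this purpose; it is not obviously located where you cite it in \cite{hopkins2014rsk}. Two details in your middle step deserve care: first, $\tilde{\textsf{P}}_{Q,m}^k(M)$ is determined purely by which summands of $M$ are supported at vertex $k$ (equivalently, whose rim hooks cross the $k$-th diagonal), not by the cumulative order in which the algorithm processes indecomposables, so the matching you need is support-based rather than order-of-processing-based; second, the identification must respect partial orders, i.e., the AR-quiver order on these summands for this orientation has to be checked against the order on matrix positions used in the Greene--Kleitman description of $\mathrm{RSK}$. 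With those points settled, your argument closes: it buys an invariant-theoretic proof that avoids comparing the two toggle procedures move by move, at the cost of importing the Greene--Kleitman characterization of $\mathrm{RSK}$, whereas the paper's proof is shorter given Hopkins's toggle description.
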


\begin{proof}
One checks that $\pos$ is  an order ideal inside of the AR quiver of $Q$, where we think of the AR quiver as a poset. It follows that, using the piecewise-linear description of $\JFg(-)$ given in Section~\ref{sec:explicitRSK}, any nontrivial $\tau$-orbit toggle performed when constructing an intermediate filling $\rho_k$ corresponds to adding a box to the Young diagram. (This is in contrast to our toggling description of Hillman--Grassl in Figure~\ref{fig:HGexample}, where steps 10, 14, and 15 of the algorithm involve non-trivial toggles with no additional boxes being added to the Young diagram.)

Indexing $\pos$ as in the proof of Theorem~\ref{thm:HG}, we have that the dimension vector in position $(i,j)$ is shown below, where the $m^{th}$ entry is circled.
\[0\cdots 0 \underbrace{1\cdots1}_{j-1}\raisebox{.5pt}{\textcircled{\raisebox{-.9pt} {1}}} 1\cdots 1 \underbrace{0\cdots 0}_{i-1}\] See Figure~\ref{fig:Pakexample} for an example. Comparing the algorithm from Section~\ref{sec:explicitRSK} to the description of $\mRSK$ in \cite[Section 2]{hopkins2014rsk}, it is immediately clear that they coincide. This proves the result.\end{proof}

\begin{remark} We could alternatively prove Theorem~\ref{thm:Pak} in a way similar to the proof of Theorem~\ref{thm:HG}. See, for example, \cite[Section 6]{garver2017greene} for the corresponding description of $\mRSK$.
\end{remark}

\begin{remark}
Similarly to Remark~\ref{remark:HGrectangle}, we can recover the RSK correspondence for any shape $\lambda$ using our map $M\mapsto\RSK$ as the RSK correspondence can be constructed using subrectangles of $\lambda$ (see for example~~\cite[Theorem 6.1]{garver2017greene}).
\end{remark}


\begin{remark} 
Fix $n$ and $m\in\{1,\ldots,n\}$. For each type $A_n$ Dynkin quiver, we obtain a bijection $\rho_{Q,m}(-)$ that we can think of as a map between multisets of rim hooks of $m^{(n+1-m)}$ and reverse plane partitions of $m^{(n+1-m)}$. We have shown that for two particular orientations, we recover the Hillman-Grassl and RSK correspondences. For other orientations, we obtain different bijections. 
We demonstrate this with an example.

We begin with the quiver 
\[Q=1\leftarrow 2\rightarrow \mathbf{3}\leftarrow 4\rightarrow 5\] with minuscule vertex 3 and use the following multiset of rim hooks of the 3-by-3 rectangle.
\[01100,01110,11111,11111,00110,00110\]
This multiset of rim hooks determines the representation $M$ of the quiver $Q$ shown below. We then obtain $\RSK$ as shown. We leave it to the reader to check that both Hillman--Grassl and the RSK correspondences with this same multiset of rim hooks yield a reverse plane partition where the maximal element is labeled with 0.

\[\begin{tikzpicture}[scale=1]
\node (00110) at (1,0) {$00110_{9}$};
\node (11000) at (3,0) {\textcolor{gray}{$11000_4$}};
\node (00111) at (0,1) {$00111_{11}$};
\node (11110) at (2,1) {$11110_6$};
\node (01000) at (4,1) {\textcolor{gray}{$01000_1$}};
\node (11111) at (1,2) {$11111_{8}$};
\node (01110) at (3,2) {$01110_3$};
\node (01111) at (2,3) {$01111_7$};
\node (00010) at (4,3) {\textcolor{gray}{$00010_2$}};
\node (00011) at (3,4) {\textcolor{gray}{$00011_5$}};
\node (00100) at (-1,2) {$00100_{13}$};
\node (11100) at (0,3) {$11100_{12}$};
\node (01100) at (1,4) {$01100_{10}$};
\node (00001) at (-1,0) {\textcolor{gray}{$00001_{14}$}};
\node (10000) at (-1,4) {\textcolor{gray}{$10000_{15}$}};
\draw[->] (00001)--(00111);
\draw[->] (00100)--(00111);
\draw[->] (00100)--(11100);
\draw[->] (10000)--(11100);
\draw[->] (00111)--(00110);
\draw[->] (00111)--(11111);
\draw[->] (11100)--(11111);
\draw[->] (11100)--(01100);
\draw[->] (00110)--(11110);
\draw[->] (11111)--(11110);
\draw[->] (11111)--(01111);
\draw[->] (01100)--(01111);
\draw[->] (11110)--(11000);
\draw[->] (11110)--(01110);
\draw[->] (01111)--(01110);
\draw[->] (01111)--(00011);
\draw[->] (00011)--(00010);
\draw[->] (01110)--(00010);
\draw[->] (01110)--(01000);
\draw[->] (11000)--(01000);
\end{tikzpicture}
\raisebox{.85in}{$M=$}
\raisebox{.23in}{
\begin{tikzpicture}[scale=.7]
\node (00110) at (1,0) {$2$};
\node (00111) at (0,1) {$0$};
\node (11110) at (2,1) {$0$};
\node (11111) at (1,2) {$2$};
\node (01110) at (3,2) {$1$};
\node (01111) at (2,3) {$0$};
\node (00100) at (-1,2) {$0$};
\node (11100) at (0,3) {$0$};
\node (01100) at (1,4) {$1$};
\draw[->] (00100)--(00111);
\draw[->] (00100)--(11100);
\draw[->] (00111)--(00110);
\draw[->] (00111)--(11111);
\draw[->] (11100)--(11111);
\draw[->] (11100)--(01100);
\draw[->] (00110)--(11110);
\draw[->] (11111)--(11110);
\draw[->] (11111)--(01111);
\draw[->] (01100)--(01111);
\draw[->] (11110)--(01110);
\draw[->] (01111)--(01110);
\end{tikzpicture}}
\raisebox{.85in}{$\RSK=$}
\raisebox{.23in}{
\begin{tikzpicture}[scale=.7]
\node (00110) at (1,0) {$2$};
\node (00111) at (0,1) {$3$};
\node (11110) at (2,1) {$2$};
\node (11111) at (1,2) {$2$};
\node (01110) at (3,2) {$1$};
\node (01111) at (2,3) {$1$};
\node (00100) at (-1,2) {$3$};
\node (11100) at (0,3) {$3$};
\node (01100) at (1,4) {$2$};
\draw[->] (00100)--(00111);
\draw[->] (00100)--(11100);
\draw[->] (00111)--(00110);
\draw[->] (00111)--(11111);
\draw[->] (11100)--(11111);
\draw[->] (11100)--(01100);
\draw[->] (00110)--(11110);
\draw[->] (11111)--(11110);
\draw[->] (11111)--(01111);
\draw[->] (01100)--(01111);
\draw[->] (11110)--(01110);
\draw[->] (01111)--(01110);
\end{tikzpicture}}\]
\end{remark}

\section*{Acknowledgements}
AG was suppported by NSERC grant RGPIN/05999-2014 and the Canada Research Chairs program. AG thanks Gabe Frieden for useful discussions and helpful comments on an earlier version of the paper. BP was supported by NSERC, CRM-ISM, and the Canada Research Chairs program. 
HT was supported by an NSERC Discovery Grant and the Canada Research Chairs program.  He thanks Arkady Berenstein and Steffen Oppermann for helpful discussions, and Guillaume Chapuy for an inspiring explanation of Robinson--Schensted--Knuth.  He also thanks the Universit\'e Paris VII for the excellent working conditions under which part of the research was carried out. The authors thank Bernhard Keller for a comment on an earlier version of the paper and Robin Sulzgruber for sharing an early version of his paper \cite{sulzgruber2016building}, which was inspirational to the authors.

\bibliographystyle{plain}
\bibliography{main_v2.bib}

\end{document}